\newtheorem{assumption}{Assumption}
\newcommand{\iprod}[2]{\left\langle {#1}, {#2} \right\rangle}
\newcommand{\lrpar}[1]{\left(#1\right)}
\newcommand{\lrbrace}[1]{\left\{#1\right\}}
\newcommand{\calA}{\mathcal{A}}
\newcommand{\bbE}{\mathbb{E}}
\newcommand{\bbN}{\mathbb{N}}
\newcommand{\bbR}{\mathbb{R}}
\DeclareMathAlphabet{\mathbsf}{OT1}{cmss}{bx}{n}
\DeclareMathAlphabet{\mathssf}{OT1}{cmss}{m}{sl}
\DeclareSymbolFont{bsfletters}{OT1}{cmss}{bx}{n}  
\DeclareSymbolFont{ssfletters}{OT1}{cmss}{m}{n}
\DeclareMathSymbol{\bsfGamma}{0}{bsfletters}{'000}
\DeclareMathSymbol{\ssfGamma}{0}{ssfletters}{'000}
\DeclareMathSymbol{\bsfDelta}{0}{bsfletters}{'001}
\DeclareMathSymbol{\ssfDelta}{0}{ssfletters}{'001}
\DeclareMathSymbol{\bsfTheta}{0}{bsfletters}{'002}
\DeclareMathSymbol{\ssfTheta}{0}{ssfletters}{'002}
\DeclareMathSymbol{\bsfLambda}{0}{bsfletters}{'003}
\DeclareMathSymbol{\ssfLambda}{0}{ssfletters}{'003}
\DeclareMathSymbol{\bsfXi}{0}{bsfletters}{'004}
\DeclareMathSymbol{\ssfXi}{0}{ssfletters}{'004}
\DeclareMathSymbol{\bsfPi}{0}{bsfletters}{'005}
\DeclareMathSymbol{\ssfPi}{0}{ssfletters}{'005}
\DeclareMathSymbol{\bsfSigma}{0}{bsfletters}{'006}
\DeclareMathSymbol{\ssfSigma}{0}{ssfletters}{'006}
\DeclareMathSymbol{\bsfUpsilon}{0}{bsfletters}{'007}
\DeclareMathSymbol{\ssfUpsilon}{0}{ssfletters}{'007}
\DeclareMathSymbol{\bsfPhi}{0}{bsfletters}{'010}
\DeclareMathSymbol{\ssfPhi}{0}{ssfletters}{'010}
\DeclareMathSymbol{\bsfPsi}{0}{bsfletters}{'011}
\DeclareMathSymbol{\ssfPsi}{0}{ssfletters}{'011}
\DeclareMathSymbol{\bsfOmega}{0}{bsfletters}{'012}
\DeclareMathSymbol{\ssfOmega}{0}{ssfletters}{'012}
\DeclareMathOperator*{\argmin}{argmin} 
\DeclareMathOperator*{\dist}{dist}
\newtheorem{requirement}{Condition}
\definecolor{brightpink}{rgb}{1.0, 0.0, 0.5}
\newcommand{\ngi}[1]{{{\color{brightpink} #1}}}
\newcommand{\dom}{\mathbf{dom}\,}
\newcommand{\revise}[1]{{{\color{black} #1}}} 
\newcommand{\mbfx}{\mathbf{x}}
\newcommand{\mbfr}{\mathbf{r}}
\newcommand{\mbfm}{\mathbf{m}}
\newcommand{\mbfn}{\mathbf{n}}
\newcommand{\barmx}{\bar{\mathbf{x}}}
\begin{document}
\title{An Inertial Block Majorization Minimization Framework for \\ 
Nonsmooth Nonconvex Optimization
\thanks{L.~T.~K.~Hien finished this work when she was at the University of Mons, Belgium. L.~T.~K.~Hien and N.~Gillis are supported by the Fonds de la Recherche Scientifique - FNRS and the Fonds Wetenschappelijk Onderzoek - Vlaanderen (FWO) under EOS project no 30468160 (SeLMA), and by the European Research Council
(ERC starting grant 679515).}}
\author{\name Le Thi Khanh Hien \email let.hien@huawei.com \\
       \addr Huawei Belgium Research Center, 3001, Leuven, Belgium
       \AND
       \name  Duy Nhat Phan \email nhatpd@hcmue.edu.vn \\
       \addr Department of Mathematics and Informatics, HCMC University of Education, Vietnam
       \AND
       \name  Nicolas Gillis \email nicolas.gillis@umons.ac.be \\
       \addr Department of Mathematics and Operational Research, University of Mons, Belgium}

\editor{}

\maketitle

\begin{abstract}
In this paper, we introduce TITAN, a novel iner\textbf{TI}al block 
majoriza\textbf{T}ion minimiz\textbf{A}tio\textbf{N} framework for nonsmooth nonconvex optimization problems. 
To the best of our knowledge, TITAN is the first framework of block-coordinate update  method  that relies on the majorization-minimization framework while embedding inertial force to each  step of the block updates. 
The inertial force is obtained via an extrapolation operator that subsumes heavy-ball and Nesterov-type accelerations for block proximal gradient methods as special cases. By choosing various surrogate functions, such as proximal, Lipschitz gradient, Bregman, quadratic, and composite surrogate functions, and by varying the extrapolation operator, TITAN produces a rich set of inertial block-coordinate update methods. 
We study sub-sequential convergence as well as global convergence for the generated sequence of TITAN. 
We illustrate the effectiveness of TITAN on two important machine learning problems, namely 
sparse non-negative matrix factorization and matrix completion. 

\end{abstract}
\begin{keywords}
inertial method, block coordinate method, majorization minimization, surrogate functions, sparse non-negative matrix factorization, matrix completion
\end{keywords}

\section{Introduction}
\label{sec:intro}

In this paper, we consider the following nonsmooth nonconvex optimization problem
\begin{equation}
\label{model}
\begin{aligned}
    \min_x \quad   
    & F(x) := f(x_1,\ldots,x_m)+ \sum_{i=1}^m g_i(x_i) \\
     \text{ such that } \quad 
     &  x_i \in \mathcal X_i \text{ for } i \in [m] = \{1,\ldots,m\},
\end{aligned}
\end{equation}
where $\mathcal X_i \subseteq \mathbb E_i$ is a closed convex set of a finite dimensional real linear space $\mathbb E_i$, $x$ can be decomposed into $m$ blocks $ x=(x_1,\ldots,x_m)$ with $x_i\in \mathcal X_i$, \revise{ $f:\mathbb E_1\times\ldots\times\mathbb E_m\to\mathbb R$ is a lower semi-continuous function that can possibly be nonsmooth nonconvex}, and $g_i(\cdot)$ is a proper and lower semi-continuous function (possibly with extended values). We assume  $\dom g_i \cap\mathcal X_i$ is a non-empty closed set and $F$ is bounded from below. We denote $\mathcal X:=\prod_{i=1}^m \mathcal X_i$. \revise{Problem~\eqref{model} is equivalent to the following optimization problem
\begin{equation}
\label{unconstrained}
\begin{array}{ll}
\min\limits_{x\in\mathbb E} \Phi(x):= F(x) + \sum\limits_{i=1}^m \mathcal I_{\mathcal X_i}(x_i),
\end{array}
\end{equation}
where $\mathcal I_{\mathcal X_i}(\cdot)$, for $i\in [m]$, is the indicator function of $\mathcal X_i$. Hence, it makes sense to consider the optimality condition $0\in \partial \Phi(x^*)$ for Problem~\eqref{model}, that is, $x^*$ is a critical point of $\Phi$. Note that $\Phi(x)=F(x)$ when $\mathcal X_i=\mathbb E_i$. 
Throughout the paper we assume the following. 
\begin{assumption}
\label{partialF}
We have 
$$\partial \Phi(x)=\lrbrace{\partial_{x_1} (F(x) + \mathcal I_{\mathcal X_1}(x_1))}\times\ldots\times\lrbrace{\partial_{x_m} (F(x) + \mathcal I_{\mathcal X_m}(x_m))}.$$
\end{assumption}
This assumption is satisfied when $f$ is a sum of a continuously differentiable function and a block separable function, see~\citealt[Proposition 2.1]{Attouch2010}.}

\subsection{Applications} \label{intro:appl}

Some remarkable applications of Problem~\eqref{model} include nonnegative matrix factorization (see \citealt{Gillis2020}), sparse dictionary learning (see \citealt{aharon2006k,XuYin2016}), and 
``$l_p$-norm" regularized sparse regression problems with $0\leq p<1$ (see~\citealp{BLUMENSATH2009,Natarajan1995}). In this paper, we will illustrate our new proposed algorithmic framework (TITAN, Algorithm~\ref{algo:iMM_multiblock} in Section~\ref{sec:multi_iMM}) on the following two machine learning problems. 

\paragraph{Sparse Non-negative Matrix Factorization (Sparse NMF).}
We consider the following sparse NMF problem, see~\cite{Peharz2012}, 
\begin{equation}
\label{sparseNMF}
\min_{U,V} \Big \{\frac12\|M-UV\|^2: U\in \mathbb R^{\mbfm\times \mbfr}_+, V \in  \mathbb R^{\mbfr\times \mbfn}_+, \|U_{:,i}\|_0\leq s \text{ for } i \in [\revise{\mbfr}] \Big\},
\end{equation}
where $M \in \mathbb{R}^{\mbfm\times \mbfn}_+$ is a data matrix, $\mbfr$ is a given positive integer,  $U_{:,i}$ denotes the $i$-th column of $U$ and $\|U_{:,i}\|_0$ denotes the number of non-zero entries of $U_{:,i}$. Problem~\eqref{sparseNMF} is an instance of Problem~\eqref{model} with $U \in  \mathcal X_1 =\mathbb R^{\mbfm\times \mbfr}$, 
$V \in \mathcal X_2 =\mathbb R^{\mbfr\times \mbfn}$, 
$f(U,V)=\frac12\|M-UV\|^2$, 
$g_1 (\cdot)$ is the indicator function of the closed nonconvex
set $\{U: U\in \mathbb R^{\mbfm\times \mbfr}_+,  \|U_{:,i}\|_0\leq s 
\text{ for } i \in [\revise{\mbfr}]\}$,  
and $g_2 (\cdot)$ is the indicator function of the closed convex set 
$\{ V: V\in \mathbb R^{\mbfr\times \mbfn}_+\}$. 

We note that $g_1$ is nonconvex while $g_2$ is convex.

\paragraph{Matrix Completion Problem (MCP).} We consider the following MCP 
\begin{equation}
\label{MF}
    \min_{U\in\mathbb{R}^{\mbfm\times \mbfr},V\in\mathbb{R}^{\mbfr\times \mbfn}} \biggl\{ \frac{1}{2}\|\mathcal P(A - UV)\|_F^2 + \mathcal R(U,V)\biggr\},
\end{equation}
where $A\in\mathbb R^{\mathbf m \times \mathbf n}$ is a given data matrix, $\mathcal R$ is a regularization term, and $\mathcal P(Z)_{ij} = Z_{ij}$ if $A_{ij}$ is observed and is equal to $0$ otherwise. The MCP \eqref{MF} is one of the workhorse approaches in recommendation system; 
see \cite{kormat, dacrema2019we, rendle2019difficulty}. 
Other applications of the MCP include sensor network localization (\citealt{Biswas2006}), social network analysis  (\citealt{Kim2011}), and image processing (\citealt{Lui2013}). 
For $\mathcal R(U,V)$, we will use the exponential regularization (see,  e.g., \citealt{brafea}), namely $\mathcal R = \phi\circ r$, where $\phi$ and $r$ are given by
\begin{equation}
    \begin{array}{ll}
        \phi(U,V) & =  \lambda\Big(\sum_{ij}\big(1-\exp(-\theta u_{ij})\big) + \sum_{ij}\big(1-\exp(-\theta v_{ij} )\big) \Big), \\
        r(U,V) & = (r_1(U),r_2(V)) = (|U|,|V|),
    \end{array}
\end{equation}
where $u_{ij}$ is the entry of $U$ at position $(i,j)$,  
$|U|$ is component-wise absolute value of $U$,  
and 
$\lambda$ and $\theta$ are tuning parameters. 
 Problem~\eqref{MF} is an instance of Problem~\eqref{model} with $U \in \mathcal X_1 =\mathbb R^{\mbfm\times \mbfr}$, $V \in \mathcal X_2 =\mathbb R^{\mbfr\times \mbfn}$, $g_i=0$ for $i=1,2$, 
and $f(U,V)=\psi(U,V) + \phi(r(U,V))$, where $\psi(U,V): = \frac{1}{2}\|\mathcal P(A - UV)\|_F^2$ is the data-fitting term. 

We note that $\mathcal R$ is nonsmooth and the proximal mappings of the functions $U\mapsto \mathcal R(U,V)$ and $V\mapsto \mathcal R(U,V)$ do not have closed forms (see more details in Section~\ref{sec:MCP}). Hence, the subproblems of proximal alternating linearized minimization method (see~\citealt{Bolte2014}) and its inertial versions (see~\citealt{Ochs2014,Xu2013,Xu2017,Pock2016,Hien_ICML2020}) do not have closed forms when solving the MCP.

\subsection{Related works} \label{relwork}

Our new proposed algorithmic framework (TITAN, Algorithm~\ref{algo:iMM_multiblock} in Section~\ref{sec:multi_iMM}) relies on block-coordinate update methods based on  
majorization minimization, and the addition of inertial force. 
In the next two paragraphs, we briefly summarize previous works on these topics.

\paragraph{Block-coordinate update methods} 

Block coordinate descent (BCD) methods are standard approaches to solve the nonsmooth nonconvex problem~\eqref{model}. Starting with a given initial point, BCD updates one block of variables at a time while fixing the values of the other blocks. Typically, there are three main types of BCD methods: classical BCD (see \citealt{GRIPPO20001,Hildreth,Powell1973,Tseng2001}), proximal BCD (see \citealt{GRIPPO20001,Razaviyayn2013,Xu2013}), 
and proximal gradient BCD (see \citealt{Beck2013,Bolte2014,Razaviyayn2013,Tseng2009}).  
Let us briefly describe these three types of BCD methods. 
Fixing $x_j$ for $j\in \{1,\ldots,m\} \setminus\{i\}$, let us call the function $x_i \mapsto f(x)$ a block $i$ function of $f$.   
The classical BCD methods alternatively minimize the block $i$ functions of the objective.  These methods fail to converge for some nonconvex problems, see for example~\cite{Powell1973}. The proximal BCD methods improve the classical BCD methods by coupling the block $i$ objective functions with a proximal term. Considering Problem~\eqref{model} with $m=2$, the authors in \cite{Attouch2010} proved the global convergence of the generated sequence of the proximal BCD methods to a critical point of $F$, which is assumed to satisfy the Kurdyka-{\L}ojasiewicz (KL) property, see \cite{Kurdyka1998, Bolte2007}. 
The proximal gradient BCD methods minimize a standard proximal linearization of the objective function, that is, they linearize $f$, which is assumed to be smooth, and take a proximal step (which can involve Bregman divergences) on the nonsmooth part $g$. Using the KL property of $F$, \cite{Bolte2014} proved the global convergence of the proximal gradient BCD for solving Problem~\eqref{model} when each block function of $f$ is assumed to be Lipschitz smooth. When the block functions are relative smooth (\citealt{Bauschke2017,lu2018relatively}), 
\cite{ahookhosh2019multi,HienNicolas_KLNMF,Teboulle2020} 
prove the global convergence.

The BCD methods presented in the previous paragraph belong to a more general framework that was proposed in \cite{Razaviyayn2013}, and named the block successive upper-bound minimization algorithm (BSUM). BSUM for one block problem is closely related to the majorization-minimization algorithm. BSUM updates one block $i$ of $x$ by minimizing an upper-bound approximation function (also known as a majorizer, or a surrogate function) of the corresponding block $i$ objective function. BSUM recovers proximal BCD when the proximal surrogate functions are chosen, and it recovers proximal gradient BCD when the Lipschitz gradient surrogate or Bregman surrogate functions are chosen, see Section~\ref{sec:TITAN_example} and~\cite{Mairal_ICML13} for examples of surrogate functions. Considering the nonsmooth nonconvex Problem~\eqref{model} with $g=0$, the authors in \cite{Razaviyayn2013} established sub-sequential convergence for the generated sequence of BSUM under some suitable assumptions. When $f$ and $g$ are convex functions, the iteration complexity of BSUM with respect to the optimality gap $F(x^k)-F(x^*)$, where $x^*$ is the optimal solution of \eqref{model}, was studied in \cite{Hong2017_BSUMcomplexity}. We note that global convergence for the generated sequence of BSUM for solving nonsmooth nonconvex Problem~\eqref{model} was not studied in~\cite{Razaviyayn2013}. 

\paragraph{Inertial methods} 

In the convex setting, the gradient descent (GD) method is known to have suboptimal convergence rate. To accelerate the convergence of the GD method, \cite{POLYAK1964} proposed the heavy ball method \revise{for solving the convex optimization problem  $\min_{x\in \mathbb R^n} f(x)$ by adding} an inertial force to the gradient direction using $\alpha^k(x^k-x^{k-1})$, where $x^k$ is the current iterate, $x^{k-1}$ is the previous iterate, and $\alpha^k$ is an extrapolation parameter. \revise{In fact, the heavy ball update is given by $x^{k+1}=x^{k}-\beta_{k} \nabla f(x^{k})   + \alpha_{k} (x^{k}-x^{k-1} )$, where $\beta_k$ is the step size.}  Later, in a series of works,  \cite{Nesterov1983,Nesterov1998,Nesterov2004,Nesterov2005} proposed the well-known 
accelerated fast gradient methods. 
 While extrapolation is not used to calculate the gradients as in the heavy ball method, Nesterov acceleration uses it to evaluate the gradients as well as adding the inertial force: \revise{denoting the extrapolation point $\bar x^k= x^{k} + \alpha_k (x^k-x^{k-1})$, Nesterov's acceleration has the form $ x^{k+1}=x^{k}-\beta_{k} \nabla f(\bar x^{k})   + \alpha_{k} (x^{k}-x^{k-1})$.}   
The spirit of using inertial terms to accelerate first-order methods has been brought to nonconvex problems. 
In the nonconvex setting, the  heavy ball acceleration type was used in \cite{Zavriev1993,Ochs2014,Ochs2019}, the Nesterov acceleration type was used in \cite{Xu2013,Xu2017}. Interestingly, using two different extrapolation points, one is for evaluating gradients and another one is for adding the inertial force, was also considered, by~\cite{Pock2016} and \cite{Hien_ICML2020}.  
Sub-sequential and global convergence of some specific inertial BCD methods for nonconvex problems have been established when $F$ is assumed to have the KL property, 
see, e.g., ~\cite{ahookhosh2020_inertial,Hien_ICML2020,Ochs2019,Xu2013,Xu2017}. To the best of our knowledge, applying acceleration strategies to the general BSUM framework has not been studied in the literature.

\subsection{Contribution}  First, we propose TITAN, a novel inertial block majorization minimization framework for solving the nonsmooth nonconvex problem~\eqref{model}. TITAN updates one block of $x$ at a time by choosing a surrogate function (see Definition~\ref{def:surrogate} and Section~\ref{sec:TITAN_example}) for the corresponding block objective function, embedding inertial force to this surrogate function and then minimizing the obtained inertial surrogate function. The novelty of TITAN lies in how we control the inertial force. Specifically, we use an extrapolation operator that can be wisely chosen depending on specific assumptions considered for Problem~\eqref{model} to produce various types of acceleration; see Section~\ref{sec:TITAN_example} for examples. 

Then, we study sub-sequential convergence as well as global convergence for TITAN, which  unifies the convergence analysis of many acceleration algorithms that TITAN subsumes.  
TITAN can be thought of as BSUM with extrapolation.  However, it is important noting that the objective function of Problem~\eqref{model} includes a separable nonsmooth function $g=\sum_{i=1}^m g_i$ that is very important to model the regularizers of many practical optimization problems, and we \revise{only require} $g$ to be lower semi-continuous. 
We note that Assumption~2~(B4) of \cite{Razaviyayn2013} on the continuity of the block surrogate functions of the objective $F$ over the joint variables could be violated for Problem~\eqref{model} when $g$ is not continuous but only lower semi-continuous. \revise{The sparse NMF problem~\eqref{sparseNMF} presented in Section~\ref{intro:appl} is such a case since $g_1$ will be the indicator function of a closed nonconvex set}.  And as such the analysis in~\cite{Razaviyayn2013} is not applicable to Problem~\eqref{model}. Furthermore, when no extrapolation is applied and $g=0$, TITAN becomes BSUM. Hence, 
the global convergence established for TITAN with suitable assumptions can be applied to derive the global convergence for BSUM, which was not studied in~\cite{Razaviyayn2013}. 

Finally, we illustrate the effectiveness of TITAN on the two applications presented in Section~\ref{intro:appl}, namely sparse NMF and the MCP. 
Applying TITAN to sparse NMF  illustrates the benefit of using inertial terms in BCD methods. The deployment of TITAN in solving the MCP illustrates the advantages of using suitable surrogate functions. Specifically, we will use a composite surrogate function for the MCP. Compared to the typical proximal gradient BCD method, each minimization step of  TITAN has a closed-form solution while each proximal gradient step does not. In our experiments, TITAN outperforms the proximal gradient BCD method (also known as proximal alternating linearized minimization), being at least 4 times faster on three widely used data sets.

\subsection{Organization of the paper} 

In the next section, we present TITAN with cyclic block update rule. 
In Section~\ref{sec:convergence}, we establish the subsequential and global convergence for TITAN. In Section~\ref{sec:TITAN_example}, we employ various surrogate functions and wisely choose the extrapolation operators to derive specific accelerated BCD methods. In particular, we recover \revise{the  inertial block proximal algorithm of~\cite{Hien_ICML2020} in Section~\ref{ex:proximal_surrogate}. In Section~\ref{sec:recover_Nesterov}, we recover the Nesterov type acceleration of~\cite{Xu2013,Xu2017} and the acceleration algorithm that uses two different extrapolation points of  \cite{Hien_ICML2020}. In Section~\ref{sec:Hessian_damping}, we use TITAN to derive a \emph{multiblock} version for the inertial gradient with Hessian damping proposed by~\cite{Adly2020}.  In Section~\ref{sec:Bregman_TITAN} and Section~\ref{sec:quadratic-surrogate} we use TITAN to derive heavy-ball type inertial block coordinate algorithms for  Bregman and quadratic surrogates. 
Furthermore, we employ TITAN to derive new inertial block coordinate methods for composite surrogates in Section~\ref{ex:composite_surrogate}. To the best of our knowledge, the inertial block coordinate methods in Sections~\ref{sec:Hessian_damping}  and~\ref{ex:composite_surrogate} and their convergence analysis are new.}  
We extend TITAN to allow essentially cyclic rule in choosing the block to update in Section~\ref{sec:essentially_cyclic}. In Section~\ref{sec:experiment}, we report the numerical results of TITAN applied on the sparse NMF and the MCP. We conclude the paper in Section~\ref{sec:conclusion}.

\section{Inertial Block Alternating Majorization Minimization}
\label{sec:multi_iMM}

In this section, we introduce TITAN, an inertial block alternating majorization-minimization  framework, with cyclic update rule.  The description of TITAN is given in Algorithm \ref{algo:iMM_multiblock}. 
\begin{algorithm}[ht!]
\caption{TITAN with cyclic update to solve Problem~\eqref{model}}
\begin{algorithmic}[1]
\label{algo:iMM_multiblock}
\REQUIRE Choose $x^{-1},x^0\in \mathcal X$ ($x^{-1}$ can be chosen equal to $x^0$).
\ENSURE $x^k$ that approximately solves~\eqref{model}. 
   \medskip
   \FOR{$k=0,1,\ldots$}
   \FOR{ $i = 1,...,m$}
   \STATE Choose a block $i$ surrogate function $u_i $ of $f$ and an extrapolation $\mathcal G^k_i(x^{k}_i, x^{k-1}_i)$. See Section~\ref{sec:requirement_Gu} for the conditions on  $u_i $  and $\mathcal G^k_i(x^{k}_i, x^{k-1}_i)$, and Section~\ref{sec:choice_Gu} 
   for general choices for $u_i$ and $\mathcal G_i^k(x^{k}_i, x^{k-1}_i)$.  \STATE\label{step-update} Update block $i$ by
   \begin{equation}
   \label{eq:iMM_update}
       \revise{x^{k+1}_i}
       \;\in\;\argmin_{x_i\in\mathcal X_i}u_i(x_i,x^{k,i-1}) - \langle  \mathcal G^k_i(x^{k}_i, x^{k-1}_i),x_i\rangle + g_i(x_i).
   \end{equation}
   \ENDFOR
    \ENDFOR
\end{algorithmic}
\end{algorithm}
At the $k$-th iteration, we cyclically update each block while fixing the values of the other blocks. 
 In Algorithm~\ref{algo:iMM_multiblock} and throughout the paper,  we use the notation 
$$\revise{x^{k,0} = x^k},\quad  
x^{k,i}=(x^{k+1}_1, \ldots,x^{k+1}_{i},x^{k}_{i+1},\ldots,x^{k}_m) \, \text{ for } i \in [m], 
\quad\text{ and }\, x^{k+1} = x^{k,m}.
$$
To update block $i$ at the $k$-th iteration, we first need to  choose a block $i$ surrogate function $u_i$ of $f$, which is defined below.
\begin{definition}[Block surrogate function]
\label{def:surrogate}
A function $u_i:\mathcal X_i \times \mathcal X \to \mathbb R  $ is called a block~$i$ surrogate function of $f$ 
if  $u_i(x_i,y)$ is continuous in $y$ and lower semi-continuous in $x_i$, and the following conditions are satisfied:
\begin{itemize}
    \item[(a)] $u_i(y_i,y) = f(y)$ for all $y\in \mathcal X$, 
    \item[(b)] $u_i(x_i,y) \geq f(x_i,y_{\ne i})$ for all $x_i\in\mathcal X_i$ and $y\in \mathcal X$, where $$f(x_i,y_{\ne i}):= f(y_1,\ldots,y_{i-1},x_i,y_{i+1},\ldots,y_m).$$
The block approximation error is defined as $h_i(x_i,y):=u_i(x_i,y) - f(x_i,y_{\ne i})$.
\end{itemize}
\end{definition}
 Then, we solve the sub-problem~\eqref{eq:iMM_update} in which the block surrogate function is equipped with an inertial force via the extrapolation operator $\mathcal G_i^k$. \revise{In the following, we give a simple example for the choice of $u_i$ and $\mathcal G_i^k$. More examples and a discussion in the context of TITAN are provided in Section~\ref{sec:TITAN_example}.} 
\revise{\begin{example}
Given a continuous function $f:\mathbb E_1\times \ldots\times \mathbb E_m\to \mathbb R$, we can take the block surrogate functions as $u_i(x_i,y) = f(x_i,y_{\ne i}) + \frac{\rho_i}{2}\|x_i - y_i\|^2,$  where $\rho_i$ is a positive scalar, and take the extrapolations as  $\mathcal G_i^k(x^k_i,x^{k-1}_i) =\rho_i\beta_i^k(x_i^k-x_i^{k-1})$, where $\beta_i^k$ are extrapolation parameters.  The update \eqref{eq:iMM_update} becomes
$$
\begin{array}{ll}\argmin\limits_{x_i \in \mathcal X_i} f(x_i,x^{k,i-1}_{\ne i}) + \frac{\rho_i}{2}\|x_i - (x^{k}_i+ \beta_i^k(x_i^{k}-x_i^{k-1})) \|^2 + g_i(x_i),
\end{array}
$$
which has the form of an inertial proximal method. In Section~\ref{ex:proximal_surrogate}, we will discuss a more general form of this choice ($\rho_i$ will be allowed to vary along with the updates of the blocks) and provide the details of its use in the context of TITAN.  
\end{example}
}
\revise{
\subsection{Conditions for TITAN} 
\label{sec:requirement_Gu} 
First note that TITAN is a generic scheme. The surrogate functions $u_i$ of TITAN must satisfy the following assumption (see Lemma \ref{prop:Lipschitzh} below for some sufficient conditions for Assumption~\ref{assump:surrogate_assum} to be satisfied).
\begin{assumption}\revise{[Bound of approximation error]}
\label{assump:surrogate_assum} 
\begin{itemize}
For $i\in [m]$, given $y\in \mathcal X$, there exists a function $x_i\mapsto \bar h_i(x_i,y)$ such that $ \bar h_i(\cdot,y)$ is continuously differentiable at $y_i$, $\bar h_i(y_i,y)=0$ and $\nabla_{x_i} \bar h_i(y_i,y)=0$, and the block approximation error $x_i\mapsto h_i(x_i,y)$  satisfies  
\begin{equation}
\label{lemma:h_property} 
h_i(x_i,y) \leq \bar h_i(x_i,y) \;  \text{ for all } \;  x_i \in \mathcal X_i.
\end{equation}
 \end{itemize}
\end{assumption} 
Together with Assumption~\ref{assump:surrogate_assum}, we also need the following additional condition on the generated sequence $\{x^k\}$.   
Once the formulas of surrogate functions $u_i$ as well as the extrapolation $\mathcal G_i^k$ are specified, TITAN generates a sequence, which must satisfy the following nearly sufficiently decreasing property (NSDP): 
\begin{equation}
\label{requirement}
\tag{NSDP}
F(x^{k,i-1}) +  \frac{\gamma_i^k}{2}\|x_i^{k}-x^{k-1}_i\|^2 \geq F(x^{k,i})+ \frac{\eta_i^k}{2}\|x_i^{k+1}-x^k_i\|^2, k=0,1,\ldots
\end{equation} 
where  $\gamma_i^k\geq 0$ and $\eta_i^k>0$ may depend  on the extrapolation parameters used in $\mathcal G_i^k$ and the parameters used to construct $u_i$, and the formulas of these sequences are known once $u_i$ and $\mathcal G_i^k$ are specified. 
}\revise{In Section~\ref{sec:choice_Gu}, we will provide sufficient conditions on  $u_i$ and $\mathcal G_i^k$ that make \eqref{requirement} satisfied.} 

The following lemma provides some sufficient conditions for Assumption~\ref{assump:surrogate_assum} to be satisfied.  
It will be used to verify Assumption~\ref{assump:surrogate_assum} for the block surrogate functions that will be given in Section~\ref{sec:TITAN_example}. 
\begin{lemma}
\label{prop:Lipschitzh}
Assumption~\ref{assump:surrogate_assum} is satisfied when one of the following two  conditions holds: 
\begin{itemize}
\item the block error $h_i(\cdot,y)$ is continuously differentiable at $y_i$ and 
$\nabla_{x_i} h_i(y_i,y)=0$,   
\item $h_i(x_i,y) \leq \upsilon_i\|x_i-y_i\|^{1+\epsilon_i}$ for some $\epsilon_i>0$ and $\upsilon_i>0$. 
\end{itemize} 
\end{lemma}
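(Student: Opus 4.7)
The plan is to handle the two conditions separately, in each case exhibiting an explicit candidate for $\bar h_i$ that satisfies all four requirements in Assumption~\ref{assump:surrogate_assum}. Before starting, I would record the trivial fact that $h_i(y_i,y)=0$ for every surrogate: this follows directly from Definition~\ref{def:surrogate}(a), since $h_i(y_i,y)=u_i(y_i,y)-f(y_i,y_{\ne i})=f(y)-f(y)=0$.

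For the first condition, the natural choice is simply $\bar h_i:=h_i$ itself. Then the inequality \eqref{lemma:h_property} holds with equality, $\bar h_i(\cdot,y)$ is continuously differentiable at $y_i$ by hypothesis, $\bar h_i(y_i,y)=h_i(y_i,y)=0$ by the remark above, and $\nabla_{x_i}\bar h_i(y_i,y)=\nabla_{x_i}h_i(y_i,y)=0$ by hypothesis. So this case reduces to a direct verification.

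For the second condition, I would take $\bar h_i(x_i,y):=\upsilon_i\|x_i-y_i\|^{1+\epsilon_i}$. The domination $h_i(x_i,y)\le \bar h_i(x_i,y)$ is precisely the hypothesis, and $\bar h_i(y_i,y)=0$ is immediate. The only step that needs a small computation is the differentiability at $y_i$ together with the vanishing of the gradient there. I would compute, for $x_i\ne y_i$,
\[
\nabla_{x_i}\bar h_i(x_i,y)=\upsilon_i(1+\epsilon_i)\|x_i-y_i\|^{\epsilon_i-1}(x_i-y_i),
\]
whose norm is $\upsilon_i(1+\epsilon_i)\|x_i-y_i\|^{\epsilon_i}$ and therefore vanishes as $x_i\to y_i$ because $\epsilon_i>0$. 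Combined with the definition $\bar h_i(y_i,y)=0$, this both shows that the Fréchet derivative at $y_i$ exists and equals $0$, and that the gradient map is continuous at $y_i$.

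The only potential subtlety—the main ``obstacle'' if any—is justifying the differentiability of the power of the norm at the origin when $\|\cdot\|$ is the Euclidean norm on the finite-dimensional space $\mathbb E_i$. This is handled by the chain-rule computation above and the limit $\|x_i-y_i\|^{\epsilon_i}\to0$, so nothing deeper than elementary calculus is required. Putting the two cases together yields Assumption~\ref{assump:surrogate_assum} in both situations.
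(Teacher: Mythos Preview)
Your proof is correct and follows exactly the same approach as the paper: take $\bar h_i=h_i$ in the first case and $\bar h_i(x_i,y)=\upsilon_i\|x_i-y_i\|^{1+\epsilon_i}$ in the second. The paper's own proof merely states these two choices without further verification, so your additional checks (that $h_i(y_i,y)=0$ from Definition~\ref{def:surrogate} and that $\|\cdot\|^{1+\epsilon_i}$ is continuously differentiable at the origin with vanishing gradient) are a welcome expansion of what the paper leaves implicit.
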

\begin{proof}
 In the first case, we take $\bar h_i(x_i,y)=h_i(x_i,y)$,  and in the second case, we take $\bar h_i(x_i,y) = \upsilon_i\|x_i-y_i\|^{1+\epsilon_i}$. 
\end{proof}

\subsection{General choices for $u_i$ and $\mathcal G_i^k$ such that the NSDP condition is satisfied}
\label{sec:choice_Gu}
Let us discuss the parameters $\gamma_i^{k}$ and $\eta_i^{k}$  in~\eqref{requirement}. In Section~\ref{sec:TITAN_example}, we provide their explicit formulas in  some specific examples of TITAN which correspond to specific choices of $u_i$ and $\mathcal G_i^k$. 
\revise{The following theorem is a cornerstone to characterize the general choices of $u_i$ and $\mathcal G_i^k $ that satisfy the~\eqref{requirement}. The two important parameters in Theorem~\ref{thrm:sufficient_uihi} to compute $\gamma_i^k $ and $\eta_i^k$ of~\eqref{requirement} are $\rho_i^{(y)}$ of Condition~\ref{condition-hi} (or $\rho_i^{(y)}$ of Condition~\ref{condition-ui}) and $A^k_i$ of Condition~\ref{assump:gki}. }
\revise{    
\begin{theorem}
\label{thrm:sufficient_uihi}
 Suppose $\mathcal G^k_i$ satisfies the following Condition~\ref{assump:gki}  and  $u_i$ satisfies the following Condition~\ref{condition-hi}. 
 \begin{requirement}
\label{assump:gki}
There exists a sequence $\{A^k_i\}_{i \in [m], k\geq 0}$ such that the extrapolation operator $\mathcal G^k_i$ satisfies 
$\|\mathcal G^k_i(x_i^k,x_i^{k-1})\|\leq A^k_i \|x_i^k-x_i^{k-1}\|$ for $i \in [m]$ and $k\geq 0$. \end{requirement}
 
 \begin{requirement}
\label{condition-hi}
Given $y\in \mathcal X$, there exists a positive constant $\rho_i^{(y)}$ 
(which may depend on~$y$) such that the block $i$ approximation error satisfies the inequality 
$$
\begin{array}{ll}
h_i(x_i,y) \geq \frac{\rho_i^{(y)}}{2}\|x_i-y_i\|^2 \; \text{ for all } \; x_i \in \mathcal X_i.
\end{array}
$$
\end{requirement}
Then the~\eqref{requirement} holds with 
\begin{equation}
\label{gamma-eta-rho}
\begin{array}{ll}
\gamma^{k}_i=\frac{(A^k_i)^2}{\nu \rho^{(x^{k,i-1})}_i } , \qquad\eta^{k}_i = (1-\nu)\rho^{(x^{k,i-1})}_i,
\end{array}
\end{equation}
where $0<\nu<1$ is a constant. For notation succinctness, we denote $\rho_i^k= \rho^{(x^{k,i-1})}_i$.

Equation~\eqref{requirement} also holds with $\gamma^{k}_i$ and $\eta^{k}_i$ 
given in~\eqref{gamma-eta-rho} if Condition~\ref{assump:gki} holds and the following condition~\ref{condition-ui} holds with $y=x^{k,i-1}$. 
 \begin{requirement} 
\label{condition-ui}
Given $y\in \mathcal X$, the function $x_i\mapsto u_i(x_i,y) + g_i(x_i)$ is $\rho^{(y)}_i$-strongly convex. 
\end{requirement}
\end{theorem}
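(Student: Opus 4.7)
The plan is to bound the increment $F(x^{k,i})-F(x^{k,i-1})$ directly and then dispose of the cross term coming from the extrapolation via Young's inequality. Since only block $i$ changes between $x^{k,i-1}$ and $x^{k,i}$, the terms $g_j$ with $j\ne i$ cancel, leaving
\[
F(x^{k,i})-F(x^{k,i-1}) \;=\; \bigl[f(x^{k,i})-f(x^{k,i-1})\bigr]+\bigl[g_i(x_i^{k+1})-g_i(x_i^k)\bigr].
\]
Definition~\ref{def:surrogate}(a) yields $u_i(x_i^k,x^{k,i-1})=f(x^{k,i-1})$, while by the very definition of the approximation error $u_i(x_i^{k+1},x^{k,i-1})=f(x^{k,i})+h_i(x_i^{k+1},x^{k,i-1})$. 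So it remains to control the $u_i+g_i$ increment minus $h_i(x_i^{k+1},x^{k,i-1})$ and, finally, to estimate the inner product that appears in the subproblem.

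Under Condition~\ref{condition-hi}, I would start from the basic optimality inequality obtained by comparing the value of the subproblem~\eqref{eq:iMM_update} at its minimizer $x_i^{k+1}$ with its value at the comparator $x_i^k$, which gives
\[
u_i(x_i^{k+1},x^{k,i-1})+g_i(x_i^{k+1})-u_i(x_i^k,x^{k,i-1})-g_i(x_i^k) \;\le\; \langle \mathcal G_i^k(x_i^k,x_i^{k-1}),\,x_i^{k+1}-x_i^k\rangle.
\]
Substituting the two $u_i$ identities and then applying the lower bound $h_i(x_i^{k+1},x^{k,i-1})\ge \tfrac{\rho_i^k}{2}\|x_i^{k+1}-x_i^k\|^2$ from Condition~\ref{condition-hi} produces
\[
F(x^{k,i})-F(x^{k,i-1})+\tfrac{\rho_i^k}{2}\|x_i^{k+1}-x_i^k\|^2 \;\le\; \langle \mathcal G_i^k(x_i^k,x_i^{k-1}),\,x_i^{k+1}-x_i^k\rangle.
\]
Cauchy--Schwarz together with Condition~\ref{assump:gki} bounds the right-hand side by $A_i^k\|x_i^k-x_i^{k-1}\|\,\|x_i^{k+1}-x_i^k\|$, and Young's inequality $ab\le \tfrac{a^2}{2\nu\rho_i^k}+\tfrac{\nu\rho_i^k}{2}b^2$ with $a=A_i^k\|x_i^k-x_i^{k-1}\|$ and $b=\|x_i^{k+1}-x_i^k\|$ delivers~\eqref{requirement} with the prescribed $(\gamma_i^k,\eta_i^k)$ in~\eqref{gamma-eta-rho}.

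In the second case, where only Condition~\ref{condition-ui} is available, I would replace the bare minimizer inequality by the strong-convexity refinement of the subproblem: with $x_i=x_i^k$ as comparator,
\[
u_i(x_i^{k+1},x^{k,i-1})+g_i(x_i^{k+1})+\tfrac{\rho_i^k}{2}\|x_i^k-x_i^{k+1}\|^2 - \langle \mathcal G_i^k(x_i^k,x_i^{k-1}),x_i^{k+1}-x_i^k\rangle \;\le\; u_i(x_i^k,x^{k,i-1})+g_i(x_i^k).
\]
Using the majorization $u_i(x_i^{k+1},x^{k,i-1})\ge f(x^{k,i})$ from Definition~\ref{def:surrogate}(b) on the left and the equality $u_i(x_i^k,x^{k,i-1})=f(x^{k,i-1})$ on the right recovers exactly the same intermediate inequality as in Case~1, after which the Young step concludes unchanged.

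The main delicate point is that Case~2 must be handled without the quadratic lower bound on $h_i$, so the quadratic cushion has to come entirely from strong convexity, and the majorization $u_i\ge f$ must be applied on the side that keeps this cushion intact rather than on the comparator side where it would be wasted. Once the inequalities are oriented correctly the remaining work is a clean three-step routine---optimality, quadratic cushion (from $h_i$ or from strong convexity), then Young with balance parameter $\nu$---and the two cases merge into the same conclusion.
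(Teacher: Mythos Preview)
Your proposal is correct and follows essentially the same approach as the paper: optimality of the subproblem at the comparator $x_i^k$, a quadratic cushion (from Condition~\ref{condition-hi} on $h_i$ in the first case, from strong convexity of $u_i+g_i$ in the second), and then Young's inequality with parameter $\nu$. The only cosmetic difference is that in Case~2 the paper separates the subgradient optimality condition and the strong-convexity inequality into two displayed steps before combining them, whereas you invoke the combined strongly-convex-minimizer inequality in one line; the content is identical.
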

}
\begin{proof} In this proof, we denote $y=x^{k,i-1}$. 
Let us consider the first case:  \revise{Condition}~\ref{assump:gki} and \revise{Condition}~\ref{condition-hi} hold. We have 
\begin{equation}
\label{eq:uihi}
\begin{array}{ll}
u_i(x^{k+1}_i,y)= f(x^{k+1}_i,y_{\ne i}) + h_i(x^{k+1}_i,y)
\geq 
f(x^{k+1}_i,y_{\ne i})+\frac{\rho^{k}_i}{2}\|x^{k+1}_i - x^{k}_i\|^2.
\end{array}
\end{equation}
On the other hand, it follows from \eqref{eq:iMM_update} that, for all $x_i \in \mathcal X_i$, we have
\begin{equation}
\label{eq:foundation}
u_i(x^{k+1}_i,y) + g_i(x^{k+1}_i)
\leq 
 u_i( x_i,y) - \langle   \mathcal G^k_i(x^k_i , x^{k-1}_i),  x_i	-x^{k+1}_i\rangle + g_i(x_i) .
\end{equation} 
Choosing $x_i=x^{k}_i$ in \eqref{eq:foundation}, we get the following inequality from~\eqref{eq:foundation} and \eqref{eq:uihi}: 
\begin{equation}
\label{eq:hstrong2}
\begin{array}{ll}
&u_i(x_i^k,y)+g_i(x_i^k)-\langle  \mathcal G^k_i(x^{k}_i , x^{k-1}_i),x^{k}_i-x^{k+1}_i\rangle
\\
&\qquad\geq f(x^{k+1}_i,y_{\ne i})+ g_i(x^{k+1}_i)+  \frac{\rho_i^{k}}{2}\|x_i^{k}-x^{k+1}_i\|^2. 
\end{array}
\end{equation}
Since
$u_i(x_i^k,y)=f(y)$, \revise{and recalling that $F(x)=f(x_1,\ldots,x_m)+\sum_{i=1}^m g_i(x_i)$, and   $f(x_i,y_{\ne i})= f(y_1,\ldots,y_{i-1},x_i,y_{i+1},\ldots,y_m)$}, we derive from \eqref{eq:hstrong2} that

\begin{equation}
\label{eq:hstrong}
\begin{array}{ll}
F(x^{k,i-1})-\iprod{ \mathcal G^k_i(x^{k}_i , x^{k-1}_i)}{x^k_i - x^{k+1}_i} \geq F(x^{k,i})+ \frac{\rho^{k}_i}{2}\|x_i^{k+1}-x^k_i\|^2. 
\end{array}
\end{equation} 
From Young's inequality, we have 
$$
\begin{array}{ll}
A^k_i \|x^{k}_i - x^{k-1}_i\| \|x^{k+1}_i - x^{k}_i\|\leq \frac{\nu\rho^k_i }{2} \|x^{k+1}_i - x^{k}_i\|^2 + \frac{(A^k_i)^2}{2\nu \rho^k_i }\|x^{k}_i - x^{k-1}_i\|^2.
\end{array}
$$
Hence, from \eqref{eq:hstrong} and  Requirement \ref{assump:gki}, we obtain
\begin{equation*}
\begin{array}{ll}
    F(x^{k,i}) +\frac{(1-\nu)\rho^k_i}{2}\|x^{k+1}_i - x^{k}_i\|^2  
   \leq
      F(x^{k,i-1}) +\frac{(A^k_i)^2}{2\nu\rho^k_i}\|x^k_i - x^{k-1}_i\|,
      \end{array}
\end{equation*}
which gives the result.

Let us now consider the second case, when  \revise{Conditions}~\ref{assump:gki} 
and~\ref{condition-ui} hold. 
Let $\tilde u_i(x_i,y)= u_i(x_i,y) + g_i(x_i)$. It follows from the optimality conditions of~\eqref{eq:iMM_update} that
\begin{equation}
\label{ieq:uistrongly} \iprod{\mathbf s_i(x^{k+1}_i)- \mathcal G^k_i(x^{k}_i , x^{k-1}_i)}{x^k_i - x^{k+1}_i}\geq 0, 
\end{equation}
where $\mathbf s_i(x^{k+1}_i)$ is a subgradient of $ \tilde u_i(\cdot,y)$ at $x^{k+1}_i$.
Since $\tilde u_i(\cdot,y)$ is strongly convex, we have 
$\tilde u_i(x_i^k,y) \geq \tilde u_i(x^{k+1}_i,y) + \iprod{\mathbf s_i(x^{k+1}_i)}{x_i^k-x^{k+1}_i} + \frac{\rho_i^k}{2}\|x_i^{k}-x^{k+1}_i\|^2. 
$
Together with~\eqref{ieq:uistrongly} and noting that $u_i(x^{k+1}_i,y)\geq f(x^{k+1}_i,y_{\ne i})$, we get~\eqref{eq:hstrong2}. The result follows using the same proof as in the first case.
\end{proof} 
Let us provide a sufficient condition for \revise{Condition}~\ref{condition-hi}. 
\begin{lemma}
\label{lemma:forh}
If $h_i(\cdot,y)$ is $\rho^{(y)}_i$-strongly convex and is differentiable at $y_i$, and \mbox{$\nabla_i h_i(y_i,y)=0$}, 
  then we have $h_i(x_i,y) \geq \frac{\rho_i^{(y)}}{2}\|x_i-y_i\|^2.$
\end{lemma}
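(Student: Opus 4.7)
The plan is to use the standard strong-convexity characterization at the anchor point $y_i$. Since $h_i(\cdot,y)$ is $\rho^{(y)}_i$-strongly convex and differentiable at $y_i$, I would invoke the first-order subgradient inequality for strongly convex functions, namely
$$
h_i(x_i,y) \;\geq\; h_i(y_i,y) + \langle \nabla_i h_i(y_i,y),\, x_i - y_i\rangle + \frac{\rho_i^{(y)}}{2}\|x_i-y_i\|^2 \quad \text{for all } x_i \in \mathcal X_i.
$$

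Next I would simplify the right-hand side using the two boundary properties inherited from the surrogate definition. By Definition~\ref{def:surrogate}(a), $u_i(y_i,y)=f(y)$, so $h_i(y_i,y) = u_i(y_i,y) - f(y_i,y_{\ne i}) = f(y) - f(y) = 0$. The hypothesis $\nabla_i h_i(y_i,y)=0$ kills the linear term. Substituting both into the inequality above yields precisely
$$
h_i(x_i,y) \;\geq\; \frac{\rho_i^{(y)}}{2}\|x_i-y_i\|^2,
$$
which is the desired bound.

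There is essentially no obstacle here: the result is the textbook consequence that a strongly convex function is lower bounded by its quadratic expansion at any differentiable point, and the hypotheses reduce the expansion to just the quadratic term. The only small point to be careful about is that differentiability is assumed only at $y_i$ (not globally), but this is already the exact setting of the standard strong-convexity gradient inequality, which only requires an element of the subdifferential at the anchor point; here $\nabla_i h_i(y_i,y)$ is such an element.
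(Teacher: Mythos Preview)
Your proposal is correct and matches the paper's own proof essentially line for line: both invoke the strong-convexity subgradient inequality at $y_i$, then use $h_i(y_i,y)=0$ (from Definition~\ref{def:surrogate}) and the hypothesis $\nabla_i h_i(y_i,y)=0$ to reduce it to the quadratic lower bound.
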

\begin{proof}
The result follows from the definition of $\rho^{(y)}_i$-strong convexity, that is, 
$$h_i(x_i,y)\geq h_i(y_i,y)+ \iprod{\nabla_i h_i(y_i,y)}{x_i-y_i}+\frac{\rho^{(y)}_i}{2}\|x_i - y_i\|^2,
$$
the assumption $\nabla_i h_i(y_i,y)=0$, and  the property $h_i(y_i,y)=0$ from Definition~\ref{def:surrogate}.
\end{proof}
\revise{In Section~\ref{sec:TITAN_example}, we will provide the explicit formulas of $A_i^k$ in some specific examples. Note that $A_i^k$ may depend on the iterates. 
Condition~\ref{condition-hi} is always satisfied for the regularized block $i$ surrogate function that has the form $u_i(x_i,y) + \frac{\rho_i^{(y)}}{2}\|x_i-y_i\|^2$, where $u_i(x_i,y)$ is any block $i$ surrogate function of $f$. }

\section{Convergence analysis}
\label{sec:convergence} 
In this section, we will study sub-sequential convergence as well as global convergence of TITAN. \revise{Let us recall that TITAN is a generic framework, for which Assumption~\ref{assump:surrogate_assum} and the~\eqref{requirement} must be satisfied to obtain our convergence guarantees. 
To guarantee a sub-sequential convergence, we need the following additional conditions.
\begin{requirement}
\label{assump:parameter}
(i) For $k=0,1,\ldots$, we have 
 \begin{equation}
\label{parameter}
\gamma_i^{k+1}\leq  C\eta_i^{k} 
\end{equation} for some constant $0< C <1 $.

(ii) There exists a positive number $\underline{l}$ such that $\min_{i,k}\big\{\frac{\eta^{k}_i}{2}\big\}\geq \underline{l}$.
\end{requirement}
}

\begin{proposition}
\label{prop:sufficient_decrease}
 Let $\{x^k\}$ be the sequence generated by TITAN, that is, Algorithm~\ref{algo:iMM_multiblock}. 
 \revise{Suppose that the parameters of TITAN are chosen such that Condition \ref{assump:parameter} (i) holds.} Let $\eta_i^{-1}= \gamma_i^0/C$.  %
 Then the following statements hold.
 
(A) For any $K>1$, we have
    \begin{equation}
    \label{eq:to_prove_finite}
    \begin{array}{ll}
             F(x^{K})+  (1-C)\sum\limits_{k=0}^{K-1} \sum\limits_{i=1}^m \frac{\eta_i^k}{2}\|x^{k+1}_i - x^{k}_i\|^2  
      \leq 
      F(x^{0}) + C \sum\limits_{i=1}^m \frac{\eta^{-1}_i}{2} \|x^{0}_i - x^{-1}_i\|^2. 
      \end{array}
     \end{equation}

(B) \revise{If Condition \ref{assump:parameter} (ii) is also satisfied},  then we have  $$\sum_{k=0}^{+\infty}\sum_{i=1}^m\|x^{k+1}_i - x^{k}_i\|^2 < +\infty.$$ 
  
\end{proposition}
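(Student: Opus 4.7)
\textbf{Plan for Proposition~\ref{prop:sufficient_decrease}.} The strategy is a standard ``Lyapunov/telescoping'' argument, driven by~\eqref{requirement}, but one has to be careful with the book-keeping of the index shift that the condition $\gamma_i^{k+1}\le C\eta_i^k$ forces.

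For part (A), the starting point is the~\eqref{requirement} inequality, rewritten as
\[
F(x^{k,i-1}) - F(x^{k,i}) \;\ge\; \tfrac{\eta_i^k}{2}\|x_i^{k+1}-x_i^k\|^2 - \tfrac{\gamma_i^k}{2}\|x_i^k-x_i^{k-1}\|^2.
\]
First I would sum this over $i=1,\dots,m$; since $x^{k,0}=x^k$ and $x^{k,m}=x^{k+1}$, the left-hand side telescopes to $F(x^k)-F(x^{k+1})$. Then I would sum over $k=0,\dots,K-1$, producing $F(x^0)-F(x^K)$ on the left and two double sums on the right. The second double sum, involving the $\gamma_i^k$'s, is handled by isolating the $k=0$ term (which becomes $\tfrac{\gamma_i^0}{2}\|x_i^0-x_i^{-1}\|^2 = \tfrac{C\eta_i^{-1}}{2}\|x_i^0-x_i^{-1}\|^2$ thanks to the convention $\eta_i^{-1}=\gamma_i^0/C$) and then reindexing $k\mapsto k+1$ in the remaining terms and applying Condition~\ref{assump:parameter}(i):
\[
\sum_{k=1}^{K-1}\sum_{i=1}^m \tfrac{\gamma_i^k}{2}\|x_i^k-x_i^{k-1}\|^2 \;=\; \sum_{k=0}^{K-2}\sum_{i=1}^m \tfrac{\gamma_i^{k+1}}{2}\|x_i^{k+1}-x_i^k\|^2 \;\le\; C\sum_{k=0}^{K-2}\sum_{i=1}^m \tfrac{\eta_i^k}{2}\|x_i^{k+1}-x_i^k\|^2.
\]
Subtracting this dominated sum from the $\eta$-sum $\sum_{k=0}^{K-1}\sum_{i=1}^m \tfrac{\eta_i^k}{2}\|x_i^{k+1}-x_i^k\|^2$ and using $C<1$ leaves the factor $(1-C)$ multiplying the full range $k=0,\dots,K-1$ (the last slice $k=K-1$ is kept without loss since we only dropped nonnegative terms on the favorable side), and moving $F(x^K)$ to the left yields exactly~\eqref{eq:to_prove_finite}.

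For part (B), since $F$ is bounded below by hypothesis in~\eqref{model}, the left-hand side of~\eqref{eq:to_prove_finite} remains bounded above uniformly in $K$. Using Condition~\ref{assump:parameter}(ii), namely $\eta_i^k/2\ge \underline{l}>0$, we obtain
\[
(1-C)\,\underline{l}\sum_{k=0}^{K-1}\sum_{i=1}^m \|x_i^{k+1}-x_i^k\|^2 \;\le\; F(x^0) - \inf F + C\sum_{i=1}^m \tfrac{\eta_i^{-1}}{2}\|x_i^0-x_i^{-1}\|^2,
\]
and letting $K\to\infty$ gives the claimed summability.

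\textbf{Expected main difficulty.} The computations themselves are elementary; the only delicate point is the off-by-one in the reindexing of the $\gamma$-sum and the correct use of the boundary convention $\eta_i^{-1}=\gamma_i^0/C$ so that Condition~\ref{assump:parameter}(i) can legitimately be applied at the initial index as well. Once this bookkeeping is done carefully, part (B) is an immediate corollary of (A) combined with the lower boundedness of $F$ and the uniform lower bound on $\eta_i^k$.
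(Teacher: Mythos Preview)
Your proposal is correct and follows essentially the same telescoping argument as the paper. The only cosmetic difference is the order of operations: the paper applies the bound $\gamma_i^k\le C\eta_i^{k-1}$ (using the convention $\eta_i^{-1}=\gamma_i^0/C$ at $k=0$) \emph{before} summing over $i$ and $k$, whereas you sum first and then reindex the $\gamma$-sum to apply Condition~\ref{assump:parameter}(i); both routes yield the same inequality, and your treatment of the boundary term and of part~(B) matches the paper's.
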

\begin{proof}
(A) 
 It follows from~\eqref{requirement} and~\eqref{parameter} that, for $k=0,1,\ldots$, we have 
\begin{equation}
\label{temp:prop5}
\begin{array}{ll}
   F(x^{k,i}) +\frac{\eta^k_i}{2}\|x^{k+1}_i - x^{k}_i\|^2  
   \leq
      F(x^{k,i-1}) + C\frac{\eta^{k-1}_i}{2}\|x^{k}_i - x^{k-1}_i\|^2.
      \end{array}
\end{equation} 
\revise{Note that $ \sum_{i=1}^m (F(x^{k,i})- F(x^{k,i-1}))=  F(x^{k+1})-F(x^{k})$.} Summing Inequality \eqref{temp:prop5} over $i=1,...,m$ gives 
\begin{equation}
\label{ieq:recursive}
\begin{array}{ll}
       F(x^{k+1})+ \sum\limits_{i=1}^m \frac{\eta^k_i}{2}\|x^{k+1}_i - x^{k}_i\|^2    
        \leq 
   F(x^{k}) +  C\sum\limits_{i=1}^m \frac{\eta^{k-1}_i}{2} \|x^{k}_i - x^{k-1}_i\|^2 .   
    \end{array}
\end{equation}
Summing up Inequality~\eqref{ieq:recursive} from $k=0$ to $K-1$, we obtain
$$
    \begin{array}{ll}
      &  F(x^{0}) +  \sum\limits_{i=1}^m  C\frac{\eta^{-1}_i}{2} \|x^{0}_i - x^{-1}_i\|^2\\
      & \qquad  \geq 
         F(x^{K})+ C\sum\limits_{i=1}^m\frac{\eta^{K-1}_i}{2}\|x^{K}_i - x^{K-1}_i\|^2 + (1-C)\sum\limits_{k=0}^{K-1} \sum\limits_{i=1}^m \frac{\eta^{k}_i}{2}\|x^{k+1}_i - x^{k}_i\|^2, 
    \end{array}
$$
which gives the result.

(B) The result is a direct consequence of the inequality \eqref{eq:to_prove_finite}.
\end{proof}

\subsection{Sub-sequential Convergence} 
\label{sec:subsequential}
Let us now prove sub-sequential convergence of TITAN. We will assume that the generated sequence $\{x^k\}$  is bounded which is a standard assumption, see \cite{Attouch2009,Attouch2010,Attouch2013,Bolte2007}. \revise{ From Inequality~\eqref{eq:to_prove_finite} in Proposition~\ref{prop:sufficient_decrease}, we have} that the boundedness of $\{x^k\}$  is satisfied for bounded-level set functions $F$. We will also assume $\|\mathcal G^k_i(x^{k}_i , x^{k-1}_i)\|$ goes to 0 when $k$ goes to $\infty$. \revise{ This assumption will be satisfied if  Condition~\ref{assump:gki} is satisfied and  $A^k_i$ is bounded for the bounded sequence $\{x^k\}$. Indeed, from Proposition~\ref{prop:sufficient_decrease}(B), $\|x^k_i - x^{k-1}_i\|$ converges to 0 when $k$ goes to $\infty$. Hence, if  $\|\mathcal G^k_i(x^{k}_i , x^{k-1}_i)\| \leq A^k_i \|x_i^k  - x^{k-1}_i\|$ and $A^k_i$ is bounded, then  $\|\mathcal G^k_i(x^{k}_i , x^{k-1}_i)\|$ goes to 0. 
}
 \begin{theorem}[Sub-sequential convergence]
\label{thm:subsequential_converge}
Suppose \revise{
Condition~\ref{assump:parameter} is satisfied for TITAN.} 
We further assume that the generated sequence $\{x^k\}$ by Algorithm~\ref{algo:iMM_multiblock} is bounded  ~~~~and  
$\|\mathcal G^k_i(x^{k}_i , x^{k-1}_i)\|$ goes to
0 when $k$ goes to $\infty$. Then every limit point $x^*$ of $\{x^k\}$ is a critical point of $\Phi$.

\end{theorem}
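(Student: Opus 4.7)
The plan is to fix a limit point $x^*$, extract a convergent subsequence $\{x^{k_j}\}\to x^*$, and establish $0 \in \partial \Phi(x^*)$ block-by-block by passing to the limit in the optimality condition of the inner subproblem~\eqref{eq:iMM_update}. Since each $\mathcal X_i$ is closed, $x^* \in \mathcal X$. Proposition~\ref{prop:sufficient_decrease}(B) guarantees $\|x^{k+1}_i - x^k_i\| \to 0$ for every $i$, so along the \emph{same} subsequence all intermediate iterates $x^{k_j,i-1}$ and $x^{k_j+1}$ also converge to $x^*$. This is the crucial observation that lets me treat the coupling between consecutive blocks in the limit.

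Next, I would write down the variational inequality characterizing $x^{k+1}_i$ from \eqref{eq:iMM_update}: for every $x_i \in \mathcal X_i$,
$$u_i(x^{k_j+1}_i, x^{k_j,i-1}) + g_i(x^{k_j+1}_i) - \lrangle{\mathcal G^{k_j}_i(x^{k_j}_i,x^{k_j-1}_i)}{x^{k_j+1}_i} \leq u_i(x_i, x^{k_j,i-1}) + g_i(x_i) - \lrangle{\mathcal G^{k_j}_i(x^{k_j}_i,x^{k_j-1}_i)}{x_i}.$$
The right-hand side converges to $u_i(x_i, x^*) + g_i(x_i)$ by continuity of $u_i$ in its second argument (Definition~\ref{def:surrogate}), boundedness of the subsequence, and the assumption $\|\mathcal G^{k_j}_i\| \to 0$. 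For the left-hand side I would use the majorization bound $u_i(x^{k_j+1}_i, x^{k_j,i-1}) \geq f(x^{k_j,i})$ together with lower semi-continuity of $f$ and $g_i$ to obtain $\liminf_j \text{LHS} \geq f(x^*) + g_i(x^*_i)$. Combined with the identity $u_i(x^*_i, x^*) = f(x^*)$ from Definition~\ref{def:surrogate}(a), this shows that $x^*_i$ is a global minimizer of $x_i \mapsto u_i(x_i, x^*) + g_i(x_i) + \mathcal I_{\mathcal X_i}(x_i)$.

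The final step translates this surrogate-based minimality into a genuine subdifferential statement. Splitting $u_i(x_i,x^*) = f(x_i,x^*_{\ne i}) + h_i(x_i,x^*)$, Assumption~\ref{assump:surrogate_assum} at $y = x^*$ provides $\bar h_i(\cdot,x^*)$ of class $C^1$ at $x^*_i$ with $\bar h_i(x^*_i,x^*)=0$ and $\nabla_{x_i}\bar h_i(x^*_i,x^*)=0$; since $0 \leq h_i(\cdot,x^*) \leq \bar h_i(\cdot,x^*)$ and both vanish at $x^*_i$, a Taylor expansion of $\bar h_i$ yields $h_i(x_i,x^*) = o(\|x_i-x^*_i\|)$. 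Substituting this into the minimization inequality gives, for all $x_i$ near $x^*_i$,
$$f(x_i,x^*_{\ne i}) + g_i(x_i) + \mathcal I_{\mathcal X_i}(x_i) \;\geq\; f(x^*) + g_i(x^*_i) + \mathcal I_{\mathcal X_i}(x^*_i) - o(\|x_i-x^*_i\|),$$
from which the Fréchet-subdifferential definition gives $0 \in \hpartial_{x_i}[F + \mathcal I_{\mathcal X_i}](x^*) \subseteq \partial_{x_i}[F + \mathcal I_{\mathcal X_i}](x^*)$. Assembling these block-wise inclusions through Assumption~\ref{partialF} produces $0 \in \partial \Phi(x^*)$, which is the desired critical-point condition.

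The main obstacle is precisely this last translation. The inertial term $\mathcal G^k_i$ and the fact that $u_i$ majorizes, but does not equal, the true objective $f$ mean that the naive limit of the variational inequality characterizes $x^*_i$ only as a minimizer of the surrogate function. The little-$o$ control on $h_i$ extracted from Assumption~\ref{assump:surrogate_assum} is exactly what is needed to absorb the surrogate-vs-objective gap into the Fréchet subdifferential; without it, the step from surrogate optimality to $0 \in \partial \Phi(x^*)$ would fail.
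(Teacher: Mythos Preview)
Your proof is correct and follows essentially the same strategy as the paper: pass to the limit in the variational inequality~\eqref{eq:foundation} to show that $x^*_i$ minimizes $u_i(\cdot,x^*)+g_i+\mathcal I_{\mathcal X_i}$, then use Assumption~\ref{assump:surrogate_assum} to remove the surrogate error and obtain $0\in\partial_{x_i}(F+\mathcal I_{\mathcal X_i})(x^*)$ blockwise, concluding via Assumption~\ref{partialF}. The only differences are cosmetic: the paper establishes the limit by first specializing~\eqref{eq:foundation} to $x_i=x^*_i$ (whereas you take a $\liminf$ via the majorization $u_i\ge f$ and lower semicontinuity of $f$), and it phrases the last step as the first-order optimality condition of the $C^1$-perturbed problem $\min_{x_i\in\mathcal X_i} F(x^*_1,\ldots,x_i,\ldots,x^*_m)+\bar h_i(x_i,x^*)$ rather than invoking the Fr\'echet-subdifferential definition directly through your little-$o$ bound.
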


\begin{proof}
Suppose a subsequence $\{x^{k_n}\}$ of $\{x^k\}$ converges to $x^* \in \mathcal X$ (we remark that $x^{k+1}_i$ lies in $\dom g_i \cap \mathcal X_i$  for all $k\geq 0$, $i\in [m]$).  Proposition~\ref{prop:sufficient_decrease}(B) implies that $x^{k_n-1} \to x^*$ and $x^{k_n+1} \to x^*$.
Choosing $x_i=x^*_i$ and $k=k_n$ in \eqref{eq:foundation}, we obtain 
\begin{equation}
\label{eq:ui_converge}
\begin{array}{ll}
&u_i(x^{k_n+1}_i,x^{k_n,i-1}) +  g_i(x^{k_n+1}_i)
\\
&\qquad\leq 
 u_i( x_i^*,x^{k_n,i-1}) 
- \langle   \mathcal G^{k_n}_i(x^{k_n}_i , x^{k_n-1}_i),  x^*_i	-x^{k_n+1}_i\rangle + g_i(x^*_i).
 \end{array}
\end{equation}
Note that $x^{k_n,i-1}\to x^*$ and $u_i(x_i,y)$ is continuous in $y$. Hence, we derive from \eqref{eq:ui_converge} that 
$$\limsup_{n\to\infty} u_i(x^{k_n+1}_i,x^{k_n,i-1}) +  g_i(x^{k_n+1}_i) \leq u_i(x_i^*,x^*) + g_i(x^*).$$
Furthermore, $ u_i(x_i,y) + g_i(x_i)$ is lower semi-continuous. Hence, $u_i(x^{k_n+1}_i,x^{k_n,i-1}) +  g_i(x^{k_n+1}_i)$ converges to $u_i(x_i^*,x^*) + g_i(x_i^*)$. 	We then choose $k=k_n$ in \eqref{eq:foundation} and let $n\to\infty$ to obtain 
$$
u_i(x^*_i,x^*)+ g_i(x_i^*)\leq u_i(x_i,x^*)+ g_i(x_i)
\; \text{ for all } \; x_i \in \mathcal X_i.
$$
Note that $u_i(x^*_i,x^*)= f(x^*) $ and $u_i(x_i,x^*)= f(x_i,x^*_{\ne i}) + h_i(x_i,x^*)$. Therefore, for all  $x_i \in \mathcal X_i$, we have
\begin{equation}
\label{eq:block_min}
\begin{split}
F(x^*) &\leq  F(x_1^*,\ldots,x_{i-1}^*,x_i,x_{i+1}^*,\ldots,x_m^*)+ h_i(x_i,x^*)\\
&\leq  F(x_1^*,\ldots,x_{i-1}^*,x_i,x_{i+1}^*,\ldots,x_m^*)+ \bar h_i(x_i,x^*),
\end{split}
\end{equation}
where we have used Assumption~\ref{assump:surrogate_assum}. Inequality  \eqref{eq:block_min} shows that, for $i=1,\ldots,m$, $x_i^*$ is a minimizer of the problem 
\begin{equation}
\label{localmin}
\min_{x_i\in \mathcal X_i}  F(x_1^*,\ldots,x_{i-1}^*,x_i,x_{i+1}^*,\ldots,x_m^*)+ \bar h_i(x_i,x^*).
\end{equation}
 The result follows from the optimality condition of \eqref{localmin} and $ \nabla_i \bar h_i(x^*_i,x^*)=0$. 

\end{proof}

\begin{remark}
Considering the case $\mathcal X=\mathbb E: =\mathbb E_1 \times \ldots\times \mathbb E_m$,  the assumption that
 Inequality~\eqref{lemma:h_property} is satisfied for all $x_i \in \mathbb E_i$ can be relaxed to that 
 \revise{for any given bounded subset of  $\mathbb E_i$, $i \in [m]$,} Inequality~\eqref{lemma:h_property} is satisfied for any $x_i$ in \revise{this bounded subset.} 
 In other words, we relax the global bound for the block approximation error to the ``local" bound\footnote{\revise{Let us give an example when a property is not satisfied over the whole space but is satisfied over any given bounded subset of the space. The function $f(x)=x^3$ does not have Lipschitz continuous gradient over the whole space $\mathbb R$, but it has Lipschitz continuous gradient over any given bounded subset of $\mathbb R$.}}. \revise{Note that Inequality~\eqref{lemma:h_property} was not used before the proof of Theorem~\ref{thm:subsequential_converge}, it was not required in the proof of Proposition~\ref{prop:sufficient_decrease}. On the other hand, we assume that the generated sequence of TITAN is bounded (see the discussion at the beginning of Section~\ref{sec:convergence} for a sufficient condition on this boundedness assumption). Hence, we can consider Inequality~\eqref{lemma:h_property} in the closed bounded convex set $\bar{\mathcal X}$ that contains the generated sequence of TITAN and contains limit points $x^*$ as interior points.  
 We repeat the proof of Theorem~\ref{thm:subsequential_converge} to obtain the first inequality of~\eqref{eq:block_min}: for all $x_i\in \mathbb E_i$, 
$$
F(x^*) \leq  F(x_1^*,\ldots,x_{i-1}^*,x_i,x_{i+1}^*,\ldots,x_m^*)+ h_i(x_i,x^*). 
$$
This inequality implies that for all $x_i\in \bar{\mathcal X}_i$, we have the second inequality of \eqref{eq:block_min}. Consequently, $x_i^*$ is a minimizer of Problem \eqref{localmin} with $\mathcal X_i$ being replaced by $ \bar{\mathcal X}_i$. Note that $x_i^*$ is in the interior of $ \bar{\mathcal X}_i$. Hence, the subsequential convergence to a critical point of $F$ also holds for the relaxed condition. 
 }
\end{remark}

\subsection{Global Convergence} 

A global convergence recipe was proposed by \cite{Attouch2010,Attouch2013,Bolte2014} for  
proximal BCD (that is, when the proximal surrogate function is used; see also Section~\ref{ex:proximal_surrogate}) 
and proximal gradient BCD methods (that is, when the Lipschitz gradient surrogate function  is used; see also Sections~\ref{ex:Lipschitz_surrogate}) for solving nonsmooth nonconvex problems; see also Section~\ref{relwork}.  
The recipe was extended in \cite{Ochs2019} and \cite[Theorem 2]{Hien_ICML2020} to deal with the accelerated algorithms, which may produce non-monotone sequences of objective function values. 
For completeness, we provide \cite[Theorem 2]{Hien_ICML2020}, which will be used to prove the global convergence of TITAN, in Appendix \ref{sec:global_converge}. In order to achieve the global convergence of the generated sequence, we need the following additional assumption.
\begin{assumption}
\label{assump:Lipschitz_ui}
(i) For any $x,z\in \mathcal X$, we have  
\begin{equation}
\label{assum:sum_subgrad} 
\begin{split}
\partial_{x_i} \big( f(x) + g_i(x_i) + \mathcal I_{\mathcal X_i}(x_i)\big) = \partial_{x_i} f(x) + \partial_{x_i} \big( g_i(x_i) + \mathcal I_{\mathcal X_i}(x_i)\big), \\
\partial_{x_i} \big( u_i(x_i,z) + g_i(x_i) + \mathcal I_{\mathcal X_i}(x_i)\big) = \partial_{x_i} u_i(x_i,z) + \partial_{x_i} \big( g_i(x_i) + \mathcal I_{\mathcal X_i}(x_i)\big).
\end{split}
\end{equation} 

(ii) \revise{For any bounded subset of $\mathcal X$ and any $x,z$ in this subset}, for  $\mathbf s_i \in \partial_{x_i} u_i (x,z)$, there exists $\mathbf t_i \in \partial_{x_i} f(x)$ such that 
$$\|\mathbf s_i - \mathbf t_i \| \leq B_i \|x-z\|$$ for some constant $B_i$ \revise{that may depends on the subset}.
\end{assumption}
\revise{We make the following remarks for Assumption \ref{assump:Lipschitz_ui}.}
\begin{itemize}

\item We note that when $g_i=0$ and $\mathcal X_i=\mathbb E_i$ then Assumption~\ref{assump:Lipschitz_ui} (i) is satisfied. Let us give another simple sufficient condition that makes Assumption~\ref{assump:Lipschitz_ui} (i) hold: if the functions $x_i\mapsto f(x)$ and $x_i\mapsto u_i(x_i,z)$ are strictly differentiable then Assumption~\ref{assump:Lipschitz_ui} (i) is satisfied \cite[Exercise 10.10]{RockWets98}. We refer the readers to \cite{RockWets98} (Corollary 10.9) for a more general sufficient condition for Assumption~\ref{assump:Lipschitz_ui} (i).

\item \revise{It is important noting that the constants $B_i$ of Assumption~\ref{assump:Lipschitz_ui} (ii) do not influence  how to choose the parameters for TITAN, their existence is just for the purpose of proving the global convergence of the generated sequence. More specifically, as we assume that the generated sequence $\{x^k\}$ is bounded, in the proof of 
Theorem~\ref{thm:global_convergence}, we only work on a bounded set that contains $\{x^k\}$.} 

\item Assumption~\ref{assump:Lipschitz_ui} (ii) is naturally satisfied when the function $f(\cdot)$ and the surrogate functions $u_i(\cdot,\cdot)$ are continuously differentiable, $\nabla_{x_i} u_i(x_i,x)=\nabla_{x_i} f(x) $, and $\nabla_{x_i} u_i(\cdot,\cdot)$ is Lipschitz continuous on any bounded subsets of $\mathcal X_i \times \mathcal X$ since in this case we have $\nabla_{x_i} u_i(x,z) - \nabla_{x_i} f(x) = \nabla_{x_i} \big ( u_i(x,z) - u_i(x_i,x) \big)$. We note that all the surrogate functions given in Sections~\ref{ex:proximal_surrogate}--\ref{ex:quadratic_surrogate} satisfy Assumption~\ref{assump:Lipschitz_ui} when $f$ has Lipschitz continuous gradient on bounded subsets of $\mathcal X$. \revise{We refer the readers to 
\cite[Section~3]{HPNiADMM2021} for an example of nonsmooth $f$ that satisfies Assumption~\ref{assump:Lipschitz_ui}~(ii).} 

\end{itemize}

\begin{theorem}[Global convergence]
\label{thm:global_convergence}
Suppose \revise{the parameters of TITAN are chosen such that Condition  \ref{assump:parameter} is satisfied.} 
 Furthermore, we assume that, the block surrogate functions $u_i(x_i,y)$ is continuous on the joint variable $(x_i,y)$, Assumption~\ref{assump:Lipschitz_ui} holds,  \revise{Condition}~\ref{assump:gki} holds with bounded $A^k_i$, $\Phi$ is a KL function (see Appendix~\ref{sec:prelnnopt}), and together with the existence of $\underline{l}$, we also assume there exists $\overline{l}>0$ such that $  \max_{i,k}\big\{\frac{\eta^{k}_i}{2}\big\}\leq \overline{l}.
$
Suppose one of the following two  conditions hold. 
\begin{enumerate} 
\item Condition~\eqref{parameter} is satisfied with some $C$ satisfying $C < {\underline{l}}/{\overline{l}} $.
\item We use a restarting regime for TITAN, that is, if $ F(x^{k+1})  \geq F(x^k)$ then we re-do the $k$-iteration with $\mathcal G^k_i = 0$ (that is, no extrapolation is used). When restarting happens, we suppose that~\eqref{requirement} is satisfied with 
\footnote{If $u_i$ satisfies \revise{Condition}~\ref{condition-hi} or \revise{Condition}~\ref{condition-ui} then we repeat the proof of Theorem~\ref{thrm:sufficient_uihi} to derive Inequality~\eqref{eq:hstrong} which leads to Condition~\eqref{requirement} being satisfied with $\gamma_i^k=0$ and $\eta_i^k=\rho_i^k/2$.} $\gamma_i^k=0$, for $i\in [m]$. 
\end{enumerate}
Then the whole generated sequence $\{x^k\}$ of Algorithm~\ref{algo:iMM_multiblock}, which is assumed to be bounded, converges to a critical point $x^*$ of $\Phi$.
\end{theorem}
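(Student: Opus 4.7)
The plan is to invoke the KL-based global convergence recipe of \cite[Theorem 2]{Hien_ICML2020} as restated in Appendix~\ref{sec:global_converge}. That recipe requires three ingredients: (i) a Lyapunov function built from $\Phi$ and the iterate gaps that satisfies a sufficient-decrease inequality; (ii) a relative-error bound controlling some element of $\partial\Phi(x^{k+1})$ by the consecutive displacements $\|x^{k+1}-x^k\|$ and $\|x^k-x^{k-1}\|$; and (iii) a continuity property linking any limit point $x^*$ of $\{x^k\}$ to the corresponding limit of $\Phi(x^k)$. With the KL hypothesis on $\Phi$, the recipe then yields $\sum_k \|x^{k+1}-x^k\|<\infty$, so $\{x^k\}$ is Cauchy; its limit is critical by Theorem~\ref{thm:subsequential_converge}.

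For the sufficient decrease I would take a Lyapunov function of the form $H^k := \Phi(x^k) + \sum_{i=1}^m (\delta_i/2)\|x^k_i - x^{k-1}_i\|^2$ with weights $\delta_i > 0$ to be chosen. Summing the~\eqref{requirement} over $i$, as was already done in Proposition~\ref{prop:sufficient_decrease}, and using $\gamma^{k+1}_i \le C\eta^k_i$, one arrives at
$$
H^{k+1} + \kappa \sum_{i=1}^m \|x^{k+1}_i - x^k_i\|^2 \;\le\; H^k
$$
provided a common $\kappa>0$ exists. Under case~1, the inequality $C < \underline l/\overline l$ together with $\underline l \le \eta^k_i/2 \le \overline l$ gives enough freedom to pick the $\delta_i$ so that such a $\kappa>0$ works uniformly in $i$ and $k$; this is precisely the step that consumes the strengthened constant. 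Under case~2 the restart rule directly enforces $\Phi(x^{k+1})\le\Phi(x^k)$, and on restarted iterations~\eqref{requirement} holds with $\gamma_i^k=0$ (per the footnote), so one may simply use $H = \Phi$, and the decrease comes for free.

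For the relative error, the first-order optimality of the subproblem~\eqref{eq:iMM_update} together with Assumption~\ref{assump:Lipschitz_ui}(i) produces, for each block~$i$,
$$
\mathcal G^k_i(x^k_i,x^{k-1}_i) \;\in\; \partial_{x_i} u_i(x^{k+1}_i, x^{k,i-1}) + \partial_{x_i}\!\bigl(g_i(x^{k+1}_i) + \mathcal I_{\mathcal X_i}(x^{k+1}_i)\bigr).
$$
Assumption~\ref{assump:Lipschitz_ui}(ii) then provides $\mathbf t^{k+1}_i \in \partial_{x_i} f(x^{k+1})$ with $\|\mathbf t^{k+1}_i - \mathbf s_i\| \le B_i \|x^{k+1} - x^{k,i-1}\|$, and the displacement $\|x^{k+1}-x^{k,i-1}\|$ is bounded by $\sum_{j\ge i}\|x^{k+1}_j - x^k_j\|$. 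Assembling over $i$ via Assumption~\ref{partialF} produces a subgradient $\xi^{k+1}\in\partial\Phi(x^{k+1})$ with $\|\xi^{k+1}\|$ dominated by a constant multiple of $\sum_i \bigl(\|x^{k+1}_i - x^k_i\| + \|x^k_i - x^{k-1}_i\|\bigr)$, where the constant depends on the $B_i$ and on the uniform bound on $A^k_i$ over the bounded orbit (available since $\{x^k\}$ is bounded and $A^k_i$ is assumed bounded).

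The continuity ingredient is already effectively proved inside Theorem~\ref{thm:subsequential_converge}: along any convergent subsequence $x^{k_n}\to x^*$, the continuity of $u_i$ on the joint variable and the squeeze argument showing $u_i(x^{k_n+1}_i,x^{k_n,i-1}) + g_i(x^{k_n+1}_i) \to u_i(x^*_i,x^*) + g_i(x^*_i)$ immediately yields $\Phi(x^{k_n})\to\Phi(x^*)$. Feeding (i)--(iii) and the KL property into the appendix recipe then gives the claim. The main technical obstacle is the Lyapunov construction in case~1: choosing the weights $\delta_i$ so that a single positive $\kappa$ works across every $i$ and every $k$ is exactly where the sharper bound $C<\underline l/\overline l$ (as opposed to $C<1$) is essential.
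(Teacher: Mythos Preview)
Your Case~1 treatment is correct and coincides with the paper's: the paper too augments $\Phi$ to $\Phi^\delta(x,y)=\Phi(x)+\sum_i\frac{\delta_i}{2}\|x_i-y_i\|^2$, sets $z^k=(x^k,x^{k-1})$, $\varphi_k^2=\tfrac12\|x^{k+1}-x^k\|^2+\tfrac12\|x^k-x^{k-1}\|^2$, picks $\delta_i=(\underline l+C\overline l)/2$, and verifies (B1)--(B4) of the appendix recipe. Your relative-error and continuity arguments also match.

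Your Case~2, however, has a real gap. Taking $H=\Phi$ does not satisfy (B1) of the recipe. The recipe requires $\Phi(x^k)-\Phi(x^{k+1})\ge\rho_2\varphi_k^2$, with $\varphi_k$ large enough to control the subgradient in (B2). Your own relative-error bound shows the subgradient at $x^{k+1}$ contains a term of order $\|x^k-x^{k-1}\|$ through $\mathcal G_i^k$, and $\mathcal G_i^k\neq 0$ precisely on the \emph{non}-restarted iterations. Hence $\varphi_k$ must include $\|x^k-x^{k-1}\|$. But on a non-restarted iteration only the ordinary \eqref{requirement} holds, giving at best
\[
\Phi(x^k)-\Phi(x^{k+1})\ \ge\ \underline l\,\|x^{k+1}-x^k\|^2-C\overline l\,\|x^k-x^{k-1}\|^2,
\]
which does not lower-bound $\varphi_k^2$ unless $C<\underline l/\overline l$ --- exactly the hypothesis you are \emph{not} allowed in Case~2. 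So the decrease does not ``come for free'', and the recipe cannot be invoked with $H=\Phi$ (nor with any $\Phi^\delta$, for the same reason).

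The paper handles Case~2 by a different, direct KL argument in the style of \cite{Bolte2014}. Restarting makes $\{F(x^k)\}$ strictly decreasing, hence $F(x^k)\to F(x^*)$ along the \emph{whole} sequence; this is what allows the uniformized KL inequality to be applied for all large $k$, not only along subsequences. Combining that inequality with the subgradient bound and the unbalanced decrease above yields
\[
\Upsilon^k\ \le\ C\,\Upsilon^{k-1}\;+\;\text{const}\cdot A_{k,k+1}\bigl(\|x^{k+1}-x^k\|+\|x^k-x^{k-1}\|\bigr),
\]
where $\Upsilon^k=\sum_i\frac{\eta_i^k}{2}\|x_i^{k+1}-x_i^k\|^2$ and $A_{k,k+1}=\xi(\Phi(x^k)-\Phi(x^*))-\xi(\Phi(x^{k+1})-\Phi(x^*))$. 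Taking square roots, using $\sqrt{ab}\le ta+b/(4t)$, and telescoping then yields $\sum_k\|x^{k+1}-x^k\|<\infty$ using only $C<1$. The monotonicity is exploited to make the KL inequality hold eventually for \emph{every} $k$, not to manufacture a one-step sufficient-decrease bound; that is the step your plan is missing.
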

\begin{proof}  See Appendix~\ref{proof_global}.
\end{proof}

We make some remarks to end this section. 
\begin{remark}[Convergence rate] As long as a global convergence (see Theorem~\ref{thm:global_convergence}) is guaranteed, 
we can derive a convergence rate for the generated sequence using the same technique as in the proof of \cite{Attouch2009} (Theorem 2). We refer the reader to \cite[Theorem 3]{Hien_ICML2020} and \cite[Theorem 2.9]{Xu2013} for some examples of using the technique of \cite[Theorem 2]{Attouch2009} to derive the convergence rate and omit the details of the proof for the convergence rate for TITAN. The type of the convergence rate depends on the value of the K{\L} exponent $a$, where $\xi(s)=c s^{1-a}$ for some constant $c$ in Definition~\ref{def:KL}. 
In particular, if $a = 0$ then TITAN converges after a finite number of steps. 
If $a\in (0, 1/2]$ then TITAN has linear convergence, that is, there exists $k_0>0$, $\omega_1>0$ and $\omega_2\in [0,1)$ such that $\| x^k -x^*\| \leq \omega_1 \omega_2^k$ for all $k\geq k_0$.  
And if $a\in (1/2, 1)$ then TITAN has sublinear convergence, that is, there exists $k_0>0$ and $\omega_1>0$ such that  $\| x^k -x^*\| \leq \omega_1 k^{-(1-a)/(2a-1)}$ for all $k\geq k_0$.
Determining the value of $a$ is out of the scope of this paper. 
\end{remark}

\begin{remark}[With or without restarting steps?]
\label{remark:restarting}
If we target a global convergence guarantee \revise{and to avoid the restarting step (which could be expensive when the objective function is expensive to evaluate)}, TITAN without restarting steps is recommended when the bounds $\underline{l}$ and $\overline{l}$ are easy to estimate (\revise{then $C$ in Condition~\eqref{parameter} also needs to satisfy $C < {\underline{l}}/{\overline{l}}$}). 
If the values of the parameters $\eta_i^k$ vary along with the block updates, 
it is in general not easy to estimate the bounds $\underline{l}$ and $\overline{l}$. 
In that case, TITAN with a restarting regime is  recommended to guarantee a global convergence. It is important to note that TITAN always guarantees a sub-sequential convergence with or without restarting steps. 
\end{remark}

\section{Some TITAN Accelerated Block Coordinate Methods}
\label{sec:TITAN_example}
In order to guarantee \revise{a subsequential} convergence, TITAN must choose the parameters that satisfy the conditions in Theorem~\ref{thm:subsequential_converge}, which include Assumption~\ref{assump:surrogate_assum}, the~\eqref{requirement},  the condition $\|\mathcal G^k_i(x^{k}_i , x^{k-1}_i)\| \to 0$ and \revise{Condition \ref{assump:parameter}}. As noted in the first paragraph of Section~\ref{sec:subsequential}, the condition  $\|\mathcal G^k_i(x^{k}_i , x^{k-1}_i)\| \to 0$ is satisfied by the extrapolation satisfying  \revise{Condition}~\ref{assump:gki} with bounded $A_i^k$.  
Theorem~\ref{thrm:sufficient_uihi} characterizes some general properties of $u_i$  and $\mathcal G_i^k$ that make the~\eqref{requirement} hold, and it determines the corresponding values of $\eta_i^k$ and $\gamma_i^k$ when  \revise{Condition}~\ref{assump:gki} is satisfied, along with \revise{Condition}~\ref{condition-hi} or \ref{condition-ui}. 
Once $\eta_i^k$ and $\gamma_i^k$ are determined \revise{(such as in  \eqref{gamma-eta-rho})}, the condition in~\eqref{parameter} helps choose the appropriate extrapolation parameters to guarantee a subsequential convergence.

In the following, we consider 
 some important block surrogate functions from the literature (more examples can be found in~\cite{Mairal_ICML13}), 
 and  derive several specific instances of TITAN. We verify  Assumption~\ref{assump:surrogate_assum} using Lemma~\ref{prop:Lipschitzh}, and provide the formulas of  $\eta_i^k$ and $\gamma_i^k$ using Theorem~\ref{thrm:sufficient_uihi}. 
 \revise{TITAN recovers} many inertial methods from the literature; see  Section~\ref{ex:proximal_surrogate}--\ref{ex:quadratic_surrogate}. \revise{TITAN with Lipschitz gradient surrogates combined with an inertial gradient with Hessian damping of~\cite{Adly2020} gives us a new inertial block coordinate method; 
 see~Section~\ref{sec:Hessian_damping}. In Section~\ref{ex:composite_surrogate}, we use TITAN to derive new inertial methods when composite surrogates are used}. The method proposed in Section~\ref{ex:composite_surrogate} will be applied to solve the matrix completion problem in Section~\ref{sec:MCP}.

\subsection{TITAN with proximal surrogate function} \label{ex:proximal_surrogate}

The proximal surrogate function, which has been used for example in~\cite{Attouch2009,Attouch2013,Hien_ICML2020}, has the following form 
\begin{equation*}
\begin{array}{ll}
u_i(x_i,y) = f(x_i,y_{\ne i}) + \frac{\rho^{(y)}_i}{2}\|x_i - y_i\|^2,
\end{array}
\end{equation*}
where  $f$ is a lower semi-continuous function and $\rho^{(y)}_i>0$ is a scalar.

\paragraph{Verifying Assumption~\ref{assump:surrogate_assum}.}  We have $h_i(x_i,y)= \frac{\rho^{(y)}_i}{2} \|x_i-y_i\|^2$.  Hence, Assumptions~\ref{assump:surrogate_assum} and \revise{Condition}~\ref{condition-hi} are satisfied. 

\paragraph{Choosing $\mathcal G_i^k$ and determining $A^k_i$.} Let us choose $\mathcal G_i^k(x^k_i,x^{k-1}_i) =\rho_i^k\beta_i^k(x_i^k-x_i^{k-1})$, where $\beta_i^k$ are some extrapolation parameters and $\rho_i^k=\rho^{(x^{k,i-1})}_i $. We have $A^k_i=\rho_i^k\beta_i^k$. The minimization problem in the update \eqref{eq:iMM_update} becomes
$$
\begin{array}{ll}\min\limits_{x_i \in \mathcal X_i} f(x_i,x^{k,i-1}_{\ne i}) + \frac{\rho^{k}_i}{2}\|x_i - (x^{k}_i+ \beta_i^k(x_i^{k}-x_i^{k-1})) \|^2 + g_i(x_i).
\end{array}
$$

\paragraph{Verifying the~\eqref{requirement}.}
The formulas of  $\eta_i^k$ and $\gamma_i^k$ are determined as in Theorem~\ref{thrm:sufficient_uihi}, and the~\eqref{requirement} is thus satisfied.  \revise{Specifically, $\gamma_i^k =\frac{(A^k_i)^2}{\nu \rho_i^k}=(\beta_i^k)^2\rho_i^k/\nu$ and $\eta_i^k=(1-\nu)\rho_i^k$. Hence, when we choose the parameters $\beta_i^k$ and $\rho_i^k$ such that $(\beta_i^{k+1})^2 \rho_i^{k+1}/\nu \leq C (1-\nu) \rho_i^{k} $ and $\rho_i^k \geq\epsilon$ for some constants $\epsilon>0$, $0<\nu, C<1$, then Condition \ref{assump:parameter} is satisfied.

When we choose $\rho_i^k=\rho$ for all $i,k$, then we can take $\underline l=\overline l=(1-\nu)\rho$ so that the first condition of Theorem~\ref{thm:global_convergence} holds, and \eqref{parameter} becomes $\beta_i^{k+1} \leq \sqrt{\nu(1-\nu)C}$. The global convergence is then guaranteed without restarting steps.} 

This TITAN scheme recovers the inertial block proximal algorithm \revise{and the convergence results} of~\cite{Hien_ICML2020} for Problem~\eqref{model}.

\subsection{TITAN with Lipschitz gradient surrogates}
\label{sec:Nesterov_TITAN}
\label{sec:Lipschitz-surrogate} 
 \label{ex:Lipschitz_surrogate}

The Lipschitz gradient surrogate function, which has been used for example in~\cite{Xu2013,Xu2017,Hien_ICML2020}, 
has the form 
\begin{equation*}
\begin{array}{ll}
u_i(x_i,y) = f(y) + \langle\nabla_i f(y), x_i- y_i\rangle + \frac{\kappa_i L^{(y)}_i}{2}\|x_i - y_i\|^2,
\end{array}
\end{equation*}
where $\kappa_i\geq 1$, the block function $x_i\mapsto f(x_i,y_{\ne i})$ is differentiable and $\nabla_i f(x_i,y_{\ne i})$ is $L_i^{(y)}$-Lipschitz continuous. Note that $L^{(y)}_i$ may depend on $y$. \revise{The block approximation error $h_i$ for this case is 
$$ h_i(x_i,y)=f(y) + \langle\nabla_i f(y), x_i- y_i\rangle + \frac{\kappa_i L^{(y)}_i}{2}\|x_i - y_i\|^2 - f(x_i,y_{\ne i}).
$$}
\paragraph{Verifying Assumption~\ref{assump:surrogate_assum}.}  We have 
$$
\nabla_{x_i} h_i(x_i,y)= \kappa_i L^{(y)}_i (x_i-y_i) + \nabla_i f(y)- \nabla_i f(x_i,y_{\ne i}), 
$$
so that $\nabla_{x_i} h_i(y_i,y)=0$. Hence, Assumption~\ref{assump:surrogate_assum} is satisfied with $\bar h_i(x_i,y)= h_i(x_i,y)$.  

\paragraph{Choosing $\mathcal G_i^k$ and determining $A^k_i$.}
 We will consider two variants of $\mathcal G_i^k$: the choice in~\eqref{Gik_Lipschitz} that leads to inertial block proximal gradient methods, see Section~\ref{sec:recover_Nesterov}, and the choice in~\eqref{Gik_Hessian_damping} that leads to block proximal gradient algorithms with Hessian damping, see Section~\ref{sec:Hessian_damping}.

\paragraph{Verifying the~\eqref{requirement}.}

\revise{Consider the case} when $g_i(x_i)$ is a nonconvex function. \revise{As $\nabla_i f(x_i,y_{\ne i})$ is $L_i^{(y)}$-Lipschitz continuous, we have $x_i \mapsto \frac{L_i^{(y)}}{2} \|x_i\|^2 -  f(x_i,y_{\ne i})$ is convex, see \cite{Zhou2018misc}. Hence, we always have $x_i \mapsto h_i(x_i,y)$ is a $(\kappa_i-1) L_i^{(y)}$-strongly convex function}. 
In this case, we need to choose $\kappa_i>1$, and  \revise{Condition}~\ref{condition-hi} is satisfied with $\rho_i^{(y)}=(\kappa_i-1) L_i^{(y)}$.
If $g_i(x_i)$ is convex then we have  $x_i\mapsto u_i(x_i,y) + g_i(x_i)$ is a $\kappa_i L_i^{(y)}$-strongly convex function; as such, in this case we can choose $\kappa_i=1$ and \revise{Condition}~\ref{condition-ui} is satisfied with $ \rho_i^{(y)}= L_i^{(y)}$. 

In the following, we consider \revise{two} specific choices for $\mathcal G_i^k$, \revise{one leads to the inertial block proximal gradient method (Section~\ref{sec:recover_Nesterov}), 
the other leads to the Hessian damping algorithm (Section~\ref{sec:Hessian_damping}). We then} determine the corresponding values of $A^k_i$ \revise{and check Condition~\ref{assump:parameter}}. 
Taking $y=x^{k,i-1}$, the \revise{corresponding} formulas of  $\eta_i^k$ and $\gamma_i^k$ \revise{will be} determined as in Theorem~\ref{thrm:sufficient_uihi}, and \revise{hence} the~\eqref{requirement} is thus satisfied \revise{for both algorithms}.

\subsubsection{Deriving inertial block proximal gradient methods}
\label{sec:recover_Nesterov}
Let us consider the case  $\nabla_i f(x_i,y_{\ne i})$ is $L_i^{(y)}$-Lipschitz continuous over $\mathbb E_i$, and \revise{take}
\begin{equation}
\label{Gik_Lipschitz}
\mathcal G_i^k(x^k_i,x^{k-1}_i) = \nabla_i f(x^{k,i-1})- \nabla_i f(\bar x^{k}_i,x^{k,i-1}_{\ne i}) + \kappa_i L_i^k\beta^k_i(x^k_i-x^{k-1}_i) ,
\end{equation}
where $ \bar x^{k}_i=x^k_i+\tau_i^k(x^k_i-x^{k-1}_i)$, $ \tau_i^k$ and $\beta_i^k$ are some extrapolation parameters, and 
\mbox{$L_i^k = L_i^{(x^{k,i-1})}$}. 
The update in \eqref{eq:iMM_update} becomes
$$\begin{array}{ll}
&\argmin\limits_{x_i\in \mathcal X_i} f(x^{k,i-1})+\iprod{\nabla_i f(x^{k,i-1})}{x_i-x^k_i} + \frac{\kappa_i L_i^k}{2}\|x_i-x^k_i\|^2 \\
& \qquad -\Big\langle \nabla_i f(x^{k,i-1})- \nabla_i f(\bar x^{k}_i,x^{k,i-1}_{\ne i}) + \kappa_i L_i^k \beta^k_i(x^k_i-x^{k-1}_i),x_i \Big\rangle + g_i(x_i) \\
& = \argmin\limits_{x_i\in \mathcal X_i}   \iprod{\nabla_i f(\bar x^{k}_i,x^{k,i-1}_{\ne i})}{x_i}+ g_i(x_i) + \frac{\kappa_i L_i^k}{2}\big\|x_i-(x^k_i+\beta_i^k(x^k_i-x^{k-1}_i))\big\|^2 . 
\end{array}$$
We now determine the values of $A^k_i$ in  \revise{Condition}~\ref{assump:gki}. We consider the following situations. 
\begin{description}
\item[General case.] In general \revise{when no convexity is assumed for the block functions of $f$}, we have 
$$
\|\mathcal G_i^k(x^k_i,x^{k-1}_i)\|\leq L^k_i(\tau_i^k + \kappa_i\beta^k_i ) \|x^k_i-x^{k-1}_i\| 
$$
Hence, we can take $A_i^k=L^k_i (\tau_i^k +\kappa_i\beta^k_i)$. \revise{Let us recall  that $\rho_i^k=(\kappa_i-1)L_i^k$, \mbox{$\kappa_i>1$}, when no convexity is assumed for $g_i$ and   $\rho_i^k=L_i^k$,  $\kappa_i=1$, when $g_i$ is convex; see the above paragraph ``Verifying the (NSDP)". 
The formulas of $\gamma_i^k$ and $\eta_i^k$ are then determined as in \eqref{gamma-eta-rho} of Theorem \ref{thrm:sufficient_uihi}, and  Condition \ref{assump:parameter} (i) tells us how to choose the extrapolation parameters $\beta_i^k$ and $\tau_i^k$. Specifically, 
$(L^{k+1}_i)^2 (\tau_i^{k+1} + \kappa_i \beta_i^{k+1})^2 \leq C \nu \rho_i^{k+1} (1-\nu) \rho_i^k$. Regarding the first condition of Theorem~\ref{thm:global_convergence}, we see that estimating the values of $\overline l$ depends on estimating the bound for $L^k_i$ which highly depends on the problem at hand. 
As mentioned in Remark~\ref{remark:restarting}, a restarting step is necessary for a global convergence guarantee when the bound cannot be estimated. 
} 

\item[The block function $f(\cdot,x^{k,i-1}_{\ne i})$ is convex.] We can get a tighter value for $A_i^k$. Specifically, if we choose $ \beta_i^k \geq \tau_i^k$, then the function 
$$ 
x_i \mapsto \xi(x_i)=\frac12\kappa_i L_i^k\frac{\beta_i^k}{\tau_i^k} (x_i)^2 - f(x_i,x^{k,i-1}_{\ne i})
$$ 
is convex, and it has $\big(\kappa_i L_i^k\frac{\beta_i^k}{\tau_i^k}\big)$-Lipschitz gradient. Therefore, we get 
$$
\begin{array}{ll}
\|\nabla \xi(\bar{x}_i^k)-\nabla \xi(x_i^k)\| \leq \kappa_i L_i^k\frac{\beta_i^k}{\tau_i^k} \|\bar{x}_i^k-x_i^k\| =\kappa_i L_i^k \beta_i^k \|x_i^k - x_i^{k-1} \|. 
\end{array}
$$
On the other hand, we see that
$$
\begin{array}{ll}
\nabla \xi(\bar{x}_i^k)-\nabla \xi(x_i^k) &= \kappa_i L_i^k\frac{\beta_i^k}{\tau_i^k} \bar{x}_i^k - \nabla_i f(\bar{x}_i^k,x^{k,i-1}_{\ne i}) - \kappa_i L_i^k\frac{\beta_i^k}{\tau_i^k} x_i^k +  \nabla_i f(x^{k,i-1}) \\
& =\mathcal G_i^k(x^k_i,x^{k-1}_i).
\end{array}
$$
Hence, in this case, we can take $A_i^k=\kappa_i L^k_i \beta_i^k$.  \revise{The condition in \eqref{parameter} becomes $(\kappa_i L^{k+1}_i \beta_i^{k+1})^2 \leq C \nu \rho^{k+1}_i (1-\nu) \rho_i^k$, where $\rho_i^k=(\kappa_i-1)L_i^k$, $\kappa_i>1$, when no convexity is assumed for $g_i$ and   $\rho_i^k=L_i^k$, $\kappa_i=1$, when $g_i$ is convex. Similarly to the previous case, we see that estimating the value of $\overline l$ depends on estimating the upper bound for $L_i^k$. 
If such a bound is too difficult to estimate,  
then a restarting step is necessary 
to have a global convergence. 
}
\end{description}

This TITAN scheme recovers the accelerated methods \revise{and their convergence results} in the literature as follows.
\begin{itemize}
\item If we use $\mathcal G_i^k$ in \eqref{Gik_Lipschitz} and choose $\beta_i^k = \tau_i^k$  then we recover the Nesterov type acceleration as in~\cite{Xu2013,Xu2017}. 
\item If we use $\mathcal G_i^k$ in \eqref{Gik_Lipschitz} and let $\beta_i^k \ne \tau_i^k$ and $ \beta_i^k \geq \tau_i^k$ then the update in \eqref{eq:iMM_update} uses two different extrapolation points as in \cite{Hien_ICML2020}. 
\end{itemize}

It is important noting that we can also recover the heavy-ball type acceleration by choosing $\mathcal G_i^k(x^k_i,x^{k-1}_i) =  \kappa_i L_i^k\beta^k_i(x^k_i-x^{k-1}_i)$, and, for this case, we can  assume $\nabla_i f(x_i,y_{\ne i})$ is $L_i^{(y)}$-Lipschitz continuous over $\mathcal X_i$ (not necessary to be over $\mathbb E_i$). 

 \begin{remark}
 \label{remark:Lipschitz}
We have derived the values of $\eta_i^k$ and $\gamma_i^k$ using Theorem~\ref{thrm:sufficient_uihi}, and specific values of $A^k_i$ and 
$\rho_i^k$ of Theorem~\ref{thrm:sufficient_uihi} were given. 
 We have analyzed the following cases: (i)~the functions $f$ and $g_i$'s are nonconvex, (ii)~the block functions of $f$ are convex but the $g_i$'s are not, 
and (iii)~the function $f$ is nonconvex but the functions $g_i$'s are convex.  

When $F$ possesses the strong property that the block functions of $f$ are convex and the $g_i$'s are convex, we can obtain better values for $\gamma_i^k$ and $\eta_i^k$ that allow larger extrapolation parameters based on Condition~\eqref{parameter}. Let us choose $\mathcal G_i^k$ as in  \eqref{Gik_Lipschitz}. 
It was established in the proof in \cite[Remark 3]{Hien_ICML2020} that 
\begin{equation}
\label{eq:strongcase}
 F(x^{k,i-1}) +  \frac{L^k_i}{2}\big( (\tau_i^k)^2+ \frac{(\beta_i^k-\tau_i^k)^2}{\nu}\big)\|x_i^{k}-x^{k-1}_i\|^2 \geq F(x^{k,i})+ \frac{(1-\nu)L^k_i}{2}\|x_i^{k+1}-x^k_i\|^2, 
\end{equation}
where $0<\nu<1$ is a constant. Hence, in this case, the~\eqref{requirement} is satisfied with
\begin{equation}
\label{eq:gammastrong}
\gamma^{k}_i =L^k_i\Big( (\tau_i^k)^2+ \frac{(\beta_i^k-\tau_i^k)^2}{\nu}\Big), \quad \eta^{k}_i=(1-\nu)L^k_i.
\end{equation} 
Note that if we choose $\beta_i^k=\tau_i^k$, then the~\eqref{requirement} is satisfied with
$$\gamma^{k}_i =L^k_i (\tau_i^k)^2, \quad \eta^{k}_i=L^k_i, 
$$
see also \cite[Lemma 2.1]{Xu2013}. 
\end{remark}

\subsubsection{Inertial block proximal gradient algorithm with Hessian damping}
\label{sec:Hessian_damping}

Let us \revise{take}
\begin{equation}
\label{Gik_Hessian_damping}
\mathcal G_i^k = \alpha_i^k\big(\nabla_i f( x^{k-1}_i,x^{k,i-1}_{\ne i}\big) - \nabla_i f(x^{k,i-1}) ) + \kappa_i L_i^k \beta_i^k (x_i^k - x_i^{k-1}), 
\end{equation}
where $\alpha_i^k$ and $\beta_i^k$ are some extrapolation parameters. The problem in  \eqref{eq:iMM_update} becomes
$$\begin{array}{ll}
&\argmin\limits_{x_i} f(x^{k,i-1})+\iprod{\nabla_i f(x^{k,i-1})}{x_i-x^k_i} + \frac{\kappa_i L_i^k}{2}\|x_i-x^k_i\|^2 \\
& \qquad -\Big\langle \alpha_i^k \big(  \nabla_i f( x^{k-1}_i,x^{k,i-1}_{\ne i})-\nabla_i f(x^{k,i-1}) \big) + \kappa_i L_i^k \beta_i^k (x_i^k - x_i^{k-1}),x_i \Big\rangle + g_i(x_i) \\
& = \argmin\limits_{x_i}  \iprod{\nabla_i f( x^{k,i-1}) + \alpha_i^k\big(\nabla_i f(x^{k,i-1})-  \nabla_i f( x^{k-1}_i,x^{k,i-1}_{\ne i}) \big) }{x_i}+ g_i(x_i)\\
&\qquad\qquad \qquad\qquad + \frac{\kappa_i L_i^k}{2}\big\|x_i-(x^k_i+\beta_i^k(x^k_i-x^{k-1}_i))\big\|^2.
\end{array}$$

To determine the values of $A^k_i$ in  \revise{Condition}~\ref{assump:gki}, let us consider the following two situations: 
\begin{itemize}
\item In the general case \revise{when no convexity is assumed for  $f(\cdot,x^{k,i-1}_{\ne i})$, we have 
$$\|\mathcal G_i^k(x_i^k,x_i^{k-1})\|\leq L^k_i(\alpha_i^k + \kappa_i \beta_i^k) \|x^k_i-x^{k-1}_i\|.
$$
}
Hence, we 
take $A^k_i=L^k_i(\alpha_i^k + \kappa_i \beta_i^k)$.

\item If the block function $f(\cdot,x^{k,i-1}_{\ne i})$ is convex,  we choose $\alpha_i^k \leq \kappa_i \beta_i^k$ to guarantee the convexity of the function $x_i \mapsto \xi(x_i) = \frac12 \kappa_i L_i^k \beta_i^k (x_i)^2 - \alpha_i^k f( x_i,x^{k,i-1}_{\ne i})$. Note that $\xi(x_i)$ has $\kappa_i L_i^k \beta_i^k$-Lipschitz gradient. Hence, similarly to Section~\ref{sec:recover_Nesterov}, we can take $A_i^k=\kappa_i L_i^k \beta_i^k$.
\end{itemize}   
\revise{ The condition in \eqref{parameter} becomes 
$ (A^{k+1}_i)^2 \leq C \nu \rho_i^{k+1} (1-\nu) \rho_i^k$, where $\rho_i^k=(\kappa_i-1)L_i^k$, $\kappa_i>1$, when no convexity is assumed for $g_i$ and   $\rho_i^k=L_i^k$,  $\kappa_i=1$, when $g_i$ is convex. Furthermore, if the upper bound of $L_i^k$ is too difficult to estimate, using restarting step is recommended to have a global convergence guarantee.} 

With this TITAN scheme, we obtain an inertial block proximal gradient algorithm with the corrective term $\nabla_i f(x^{k,i-1})-  \nabla_i f( x^{k-1}_i,x^{k,i-1}_{\ne i})$ which is related to the discretization of the Hessian-driven damping term; see~\cite{Adly2020}. 
When $g_i(x_i)=0$, the update in \eqref{eq:iMM_update} becomes
$$
x^{k+1}_i=x_i^k + \beta_i^k (x_i^k - x_i^{k-1}) - \frac{1}{\kappa_i L_i^k} \Big( 
\nabla_i f( x^{k,i-1}) + \alpha_i^k \big(\nabla_i f(x^{k,i-1})-  \nabla_i f( x^{k-1}_i,x^{k,i-1}_{\ne i}) \big)
\Big),
$$
which has the form of the inertial gradient algorithm with Hessian damping of~\cite{Adly2020}. 

\subsection{TITAN with Bregman surrogates}
\label{sec:Bregman_TITAN}
\label{ex:Bregman_surrogate} 

The Bregman surrogate for relative smooth functions, which has been used for example in~\cite{ahookhosh2019multi,HienNicolas_KLNMF,Teboulle2020}, has the form 
\begin{equation*}
u_i(x_i,y) =  f(y) + \langle\nabla_i f(y), x_i- y_i\rangle + \kappa_i L^{(y)}_i D_{\varphi^{(y)}_i}(x_i,y_i),
\end{equation*}
where $\kappa_i\geq 1$,  the block function $x_i\mapsto f(x_i,y_{\ne i})$ is differentiable, $\varphi^{(y)}_i$ is a differentiable convex function such that the function $x_i \mapsto L^{(y)}_i \varphi^{(y)}_i(x_i) - f(x_i,y_{\ne i})$ is convex, and $D_{\varphi^{(y)}_i}$ is the block Bregman divergence associated with $\varphi^{(y)}_i$ defined by
\begin{equation}
    D_{\varphi^{(y)}_i}(x_i,v_i) = \varphi^{(y)}_i(x_i) - [\varphi^{(y)}_i(v_i) + \langle\nabla \varphi^{(y)}_i(v_i), x_i- v_i\rangle].
\end{equation}
 It is assumed that $\varphi^{(y)}_i$ is a $\rho_{\varphi^{(y)}_i}$-strongly convex function on $\mathbb E_i$ and its gradient is Lipschitz continuous on bounded subsets of $\mathbb E_i$.
\paragraph{Verifying Assumption~\ref{assump:surrogate_assum}.} 
\revise{The block approximation error $h_i$ for this case is
$$h_i(x_i,y)=  f(y) + \langle\nabla_i f(y), x_i- y_i\rangle + \kappa_i L^{(y)}_i D_{\varphi^{(y)}_i}(x_i,y_i)-f(x_i,y_{\ne i}). $$
We thus have }

 $$
 \nabla_{x_i} h_i(x_i,y)=\kappa_i L^{(y)}_i(\nabla \varphi^{(y)}_i(x_i)  -\nabla \varphi^{(y)}_i(y_i)) +  \nabla_i f(y)- \nabla_i f(x_i,y_{\ne i}).
 $$ 
Hence,  Assumption~\ref{assump:surrogate_assum} is satisfied with $\bar h_i(x_i,y)= h_i(x_i,y)$.
 
\paragraph{Choosing $\mathcal G_i^k$ and determining $A^k_i$.} 
 Let us consider \revise{when a weak inertial force is used:} $\mathcal G_i^k(x^k_i,x^{k-1}_i) =\beta_i^k(x_i^{k-1}-x_i^k)$, where $\beta_i^k$ are some extrapolation parameters. In this case, we have $A^k_i=\beta_i^k$. \revise{This case recovers the block inertial Bregman proximal algorithm in~\cite{ahookhosh2020_inertial}}.  

\paragraph{Verifying the~\eqref{requirement}.} We use Theorem~\ref{thrm:sufficient_uihi} to determine the values of  $\eta_i^k$ and $\gamma_i^k$ of the~\eqref{requirement}. 
Similarly to Section~\ref{ex:Lipschitz_surrogate}, if $g_i(x_i)$ is convex then $x_i\mapsto u_i(x_i,y) + g_i(x_i)$ is a $\kappa_i L^{(y)}_i \rho_{\varphi_i}$-strongly convex function. In this case we can choose $\kappa_i=1$ and Condition~\ref{condition-ui} is satisfied with $\rho_i^{(y)}=L^{(y)}_i \rho_{\varphi^{(y)}_i}$. Considering the case \revise{when no convexity is assumed for} $g_i$, as we have $h_i(\cdot,y)$ is a $(\kappa_i-1)L^{(y)}_i \rho_{\varphi^{(y)}_i}$-strongly convex function,  we need to choose $\kappa_i>1$, 
and  \revise{Condition}~\ref{condition-hi} is satisfied with $\rho_i^{(y)} = (\kappa_i-1)L^{(y)}_i \rho_{\varphi^{(y)}_i} $. Taking $y=x^{k,i-1}$, the formulas of  $\eta_i^k$ and $\gamma_i^k$ are determined as in Theorem~\ref{thrm:sufficient_uihi}. 

\revise{Therefore, when weak inertial force is used, the condition  \eqref{parameter} becomes 
$(\beta_i^{k+1})^2\leq C\nu \rho_i^{k+1} (1-\nu)\rho_i^k$. If we further assume that  $L_i^{(y)}=L_i$, for  $i=1,\ldots,m$ (that is, $L_i^{(y)}$ is independent of $y$, see \cite{ahookhosh2020_inertial}) then the first condition of Theorem \ref{thm:global_convergence} can be verified, that leads to a global convergence without restarting steps.  
}

\revise{In the following, we propose another method to choose $\mathcal G_i^k$ that leads to a new inertial algorithm when Bregman surrogates are used.}
\paragraph{ Heavy ball type acceleration with back-tracking.} Let us choose $$\mathcal G_i^k(x^k_i,x^{k-1}_i) = \kappa_i L_i^k(\nabla \varphi^k_i(\bar x^{k}_i)-\nabla \varphi^k_i(x^k_i)),
$$
where $ \varphi^k_i = \varphi^{(x^{k,i-1})}_i$, $ \bar x^{k}_i=x^k_i+\tau_i^k(x^k_i-x^{k-1}_i)$ with $\tau_i^k$ being extrapolation parameters. 
Recall we assume that $\varphi^k_i(\cdot)$ is strongly convex and differentiable on $\mathbb E_i$, and hence  $\nabla \varphi^k_i(\bar x^{k}_i)$ is well-defined.  The update \eqref{eq:iMM_update} becomes
$$\begin{array}{l}
 \argmin\limits_{x_i} \iprod{\nabla_i f( x^{k,i-1}) }{x_i-x^k_i}+ g_i(x_i)  + \kappa_i L_i^k \big( \varphi^k_i(x_i)  -\langle \nabla \varphi^k_i(\bar x^k_i),x_i-\bar x_i^k\rangle -  \varphi^k_i(\bar x_i^k)\big) \\
\quad =\argmin\limits_{x_i}  \iprod{\nabla_i f( x^{k,i-1}) }{x_i-x^k_i}+ g_i(x_i) + \revise{\kappa_i L_i^k} D_{\varphi^k_i}(x_i,\bar x_i^k),
\end{array}$$
\noindent which has the form of a heavy ball acceleration of \revise{\cite{POLYAK1964}}. Note that we do not assume that  $\nabla\varphi_i^k$ is globally Lipschitz continuous. Therefore, we propose to apply line-search to determine the extrapolation parameter $\tau_i^k$ as follows. 
Starting with $\tau_i^k=1$, we decrease $\tau_i^k$ by multiplying it with a constant $\bar\tau<1$ until the following condition is satisfied
$$ \kappa_i L_i^k\|\nabla \varphi^k_i(\bar x^{k}_i)-\nabla \varphi^k_i(x^k_i) \|^2 \leq C\|x^k_i-x^{k-1}_i \|^2  \rho_i^k \rho_i^{k+1}.
$$
This process terminates after a finite number of steps as we assume $\nabla\varphi_i^k(x_i)$ is Lipschitz continuous on \revise{any given} bounded sets  of $\mathbb E_i$.  Then the condition in~\eqref{parameter} is satisfied with $A^k_i=\frac{\|\nabla \varphi^k_i(\bar x^{k}_i)-\nabla \varphi^k_i(x^k_i) \|}{\|x^k_i-x^{k-1}_i \|}$. Since $\nabla\varphi_i^k(\cdot)$  is Lipschitz continuous on \revise{any given} bounded subsets, we have $A^k_i$ is bounded over the bounded set that contains the generated sequence. 
\subsection{TITAN with quadratic surrogates}
\label{sec:quadratic-surrogate}
\label{ex:quadratic_surrogate} 

The quadratic surrogate, which has been used for example in~\cite{Emilie2016,Ochs2019}, has the following form
\begin{equation}
u_i(x_i,y) =  f(y) + \langle\nabla_i f(y), x_i- y_i\rangle + \frac{\kappa_i}{2}(x_i- y_i)^T H^{(y)}_i(x_i- y_i),
\end{equation}
where $\kappa_i\geq 1$, $f$ is twice differentiable and $H^{(y)}_i$ is a positive definite matrix such that $(H^{(y)}_i - \nabla_i^2 f(x_i,y_{\ne i}))$ is positive definite ($H^{(y)}_i$ may depend on $y$). 

Taking $y=x^{k,i-1}$, we note that the quadratic surrogate   is a special case of the Bregman surrogate (Section~\ref{sec:Bregman_TITAN}) with $\varphi^k_i(x)=x_i^T H^k_i x_i$, $L_i^k=1$ and $\rho_{\varphi_i^k}$ being the smallest eigenvalue of $H_i^k$. However, it is important noting that the kernel function $\varphi_i^k(x_i)=\iprod{x_i}{H_i^k x_i}$ is globally $\|H_i^k\|$-Lipschitz smooth. Therefore, we can recover the heavy ball type acceleration as in Section~\ref{ex:Bregman_surrogate} but without back-tracking for the extrapolation parameters as follows. 
We choose $G_i^k$ as 
$$
\mathcal G_i^k(x^k_i,x^{k-1}_i) = \kappa_i (H^k_i(\bar x^{k}_i)-H^k_i(x^k_i))= \kappa_i \tau_i^k H^k_i(x_i^k-x_i^{k-1}),
$$
where $\bar x^{k}_i = x^{k}_i + \tau_i^k(x_i^k-x_i^{k-1})$. In this case, $A_i^k=\kappa_i \tau_i^k \|H^k_i\|$. The update in \eqref{eq:iMM_update} has the form of a heavy ball acceleration
$$\argmin_{x_i}  \iprod{\nabla_i f_i( x^k_i) }{x_i-x^k_i}+ g_i(x_i) + \frac{\kappa_i}{2}(x_i-\bar x_i^k)^T H_i^k (x_i-\bar x_i^k).
$$
\revise{The condition in \eqref{parameter} for this case is 
$ (\kappa_i \tau_i^{k+1} \| H_i^{k+1}\|)^2\leq C \nu \rho_i^{k+1} (1-\nu) \rho_i^k$, where $\rho_i^k=(\kappa_i-1) \lambda_{\min}(H_i^k)$, $\kappa_i>1$, if no convexity is assumed for $g_i$, and $\rho_i^k=\lambda_{\min}(H_i^k)$, $\kappa_i=1$, if $g_i$ is convex. The upper bound of $ \lambda_{\min}(H_i^k) $ highly depends on specific applications. In case this bound is not easy to estimate, a restarting step can be used to have global convergence. 
}

\subsection{TITAN with composite surrogates}
\label{sec:compositeTITAN}
\label{ex:composite_surrogate} 

In this section, we derive new inertial algorithms when using composite surrogates. Suppose $f$ has the form 
\begin{equation}
\label{eq:f-composite}
f(x)=\psi(x) + \phi(r(x)),
\end{equation}  
where
\begin{itemize}

\item $\psi: \mathcal X \to \mathbb R$ is a nonsmooth nonconvex function, \revise{and let us denote $u_i^{\psi}(x_i,y)$, for $i \in [m]$, to be block surrogate functions of $\psi$},

\item  $r = (r_1,...,r_m)$, where $r_i:\mathcal X_i\to \mathcal Y_i \subset\mathbb F_i$ are Lipschitz continuous (that is, $\| r_i(x_i) - r_i(y_i)\| \leq L_{r_i} \|x_i-y_i\|$ for $x_i, y_i \in \mathcal X_i$) 
and $\mathbb F_i$ ($i=1,\ldots,m$) are finite dimensional real linear spaces, and 

\item $\phi:\mathcal Y := \mathcal Y_1\times ... \times\mathcal Y_m\to \mathbb R_+$ is a continuously differentiable and block-wise concave function with Lipschitz gradient.

\end{itemize} 
There are several practical problems in machine learning that minimize an objective function of the form~\eqref{eq:f-composite}; see for example \cite{brafea,fanvar,phagro}. We will provide an example with the MCP in Section~\ref{sec:experiment}. 

Considering $f$ of the form~\eqref{eq:f-composite}, we propose to use the following composite block surrogate functions: 
\begin{equation*}
    u_i(x_i,y) = u_i^{\psi}(x_i,y) + \phi(r(y)) + \langle\nabla_i\phi(r(y)), r_i(x_i) - r_i(y_i)\rangle.
\end{equation*}
Since the block function of $\phi$ is concave, we have
\begin{equation}
\label{eq-concavity}
 (\phi\circ r)(x_i, y_{\neq i})  \leq \phi(r(y)) + \langle\nabla_i\phi(r(y)), r_i(x_i) - r_i(y_i)\rangle,
\end{equation}
where $\langle\nabla_i\phi(r(y))$ is the gradient of $\phi$ at $r(y)$ with respect to block $i$.

\paragraph{Verifying Assumption~\ref{assump:surrogate_assum}.} Let us assume the block surrogate functions $u_i^\psi(\cdot,\cdot) $ of $\psi(\cdot)$ satisfy Assumption~\ref{assump:surrogate_assum}. We prove that the block surrogate functions $u_i$ of $f$ also satisfy Assumption~\ref{assump:surrogate_assum}. Indeed, we have 
\begin{align*}
& h_i(x_i,y)= u_i(x_i,y)-f_{\ne i}(x_i,y)\\
& = u^\psi_i(x_i,y)-\psi(x_i,y_{\ne i}) + \phi(r(y))+ \langle\nabla_i\phi(r(y)), r_i(x_i) - r_i(y_i)\rangle -  \phi\circ r(x_i, y_{\neq i}).  
\end{align*}
Moreover, as we assume $\nabla_i \phi$ is Lipschitz continuous, we have
$$
  \phi(r(y)) + \langle\nabla_i\phi(r(y)), r_i(x_i) - r_i(y_i)\rangle  -  (\phi\circ r)(x_i,y_{\neq i})\leq  \frac{L_i^\phi}{2}\|r_i(x_i) - r_i(y_i)\|^2,
$$
for some constant $ L_i^\phi$. 
Therefore, we obtain 
\begin{equation}
\label{eq:hi_composite}
\begin{array}{ll}
h_i(x_i,y)&\leq  u^\psi_i(x_i,y)-\psi(x_i,y_{\ne i}) +\frac{L_i^\phi}{2}\|r_i(x_i) - r_i(y_i)\|^2\\
 & \leq  u^\psi_i(x_i,y)-\psi(x_i,y_{\ne i}) + \frac{L_i^\phi (L_{r_i})^2}{2}\|x_i - y_i\|^2,
 \end{array}
\end{equation}
where we use the Lipschitz continuity of $r_i(\cdot)$ in the last inequality. Since $u_i^\psi(\cdot,\cdot) $ satisfies Assumption~\ref{assump:surrogate_assum}, it follows from~\eqref{eq:hi_composite} that $u_i(\cdot,\cdot)$ satisfies Assumption~\ref{assump:surrogate_assum}.

\paragraph{Choosing $\mathcal G_i^k$ and determining $A_i^k$.}
The values of $A^k_i$ of Theorem~\ref{thrm:sufficient_uihi} depends on how we choose 
block surrogate functions for $\psi$, and how we choose $\mathcal G_i^k$. 
Specific examples and their corresponding values of $A^k_i$ that were presented in Section~\ref{sec:Lipschitz-surrogate}, Section~\ref{sec:Bregman_TITAN} and Section~\ref{sec:quadratic-surrogate} can be used for $\psi$. 

\paragraph{Verifying the~\eqref{requirement}.}
Let us determine the values of $\rho_i^k$ of Theorem~\ref{thrm:sufficient_uihi} for the two cases (i) $u_i^{\psi}$ satisfies \revise{Condition}~\ref{condition-hi}, and (ii) $u_i^\psi(\cdot,y)$ satisfies \revise{Condition}~\ref{condition-ui} and $x_i\mapsto  \langle\nabla_i\phi(r(y)), r_i(x_i) \rangle$ is convex. 
For the first case, we see that  $u_i(x_i,y)$ also satisfies \revise{Condition}~\ref{condition-hi}. Indeed,  it follows from Inequality~\eqref{eq-concavity} that 
$$
h_i(x_i,y)\geq u_i^\psi(x_i,y) - \psi(x_i,y_{\ne i}) \geq \frac{\rho_i^{(y)}}{2}\|x_i-y_i\|^2.
$$
For the second case, we see that $u_i(x_i,y) + g_i(x_i)$ is also a $\rho_i^y$-strongly convex function.   
The formulas of  $\eta_i^k$ and $\gamma_i^k$ are then determined as in Theorem~\ref{thrm:sufficient_uihi} \revise{and the condition in \eqref{parameter} tells us how to choose the corresponding extrapolation parameters such that a subsequential convergence is guaranteed}.

\begin{remark}
\label{remark:composite}
Let us consider the case when $g_i(x_i)$ and $x_i\mapsto  \langle\nabla_i\phi(r(y)), r_i(x_i) \rangle$,  for $i\in [m]$, are convex, $\psi(x)$ is a block-wise convex function, 
and its block functions $x_i\mapsto \psi(x_i,y_{\ne i})$ are continuously differentiable with $L_i^{(y)}$-Lipschitz gradient. 
We choose the Lipschitz gradient surrogate for $\psi$, 
and $\mathcal G_i^k$ as in  \eqref{Gik_Lipschitz}. 
Let $y=x^{k,i-1}$ and $L_i^k=L_i^{(x^{k,i-1})}$. Using the same technique as in the proof of \cite[Remark 3]{Hien_ICML2020}, we get the following inequality (note that we can also take $F=\psi(x) + \sum_{i=1}^m \big(\langle\nabla_i\phi(r(y)), r_i(x_i) \rangle + g_i(x_i) \big)$ in~\eqref{eq:strongcase} to obtain the result): 
$$\begin{array}{ll}
&\psi(x^{k,i-1}) +  \langle\nabla_i\phi(r(y)), r_i(x_i^{k})  \rangle + g_i(x^{k}_i) + \frac{L^k_i}{2}\big( (\tau_i^k)^2+ \frac{(\beta_i^k-\tau_i^k)^2}{\nu}\big)\|x_i^{k}-x^{k-1}_i\|^2
\\
 &\geq \psi(x^{k,i})+\langle\nabla_i\phi(r(y)), r_i(x_i^{k+1})  \rangle + g_i(x^{k+1}_i)+ \frac{(1-\nu)L^k_i}{2}\|x_i^{k+1}-x^k_i\|^2. 
\end{array}
$$
Together with \eqref{eq-concavity}, we obtain 
\begin{equation}
\label{eq:composite-strong}
\begin{array}{ll}
&\psi(x^{k,i-1}) + \phi(r(y)) + g_i(x^{k}_i) + \frac{L^k_i}{2}\big( (\tau_i^k)^2+ \frac{(\beta_i^k-\tau_i^k)^2}{\nu}\big)\|x_i^{k}-x^{k-1}_i\|^2 \\
 &\geq \psi(x^{k,i}) + (\phi\circ r)(x^{k+1}_i, y_{\neq i})+ g_i(x^{k+1}_i)+ \frac{(1-\nu)L^k_i}{2}\|x_i^{k+1}-x^k_i\|^2. 
 \end{array}
\end{equation}
Moreover, recall that $F(x)=\psi(x)+ \phi(r(x)) + \sum_{i=1}^m g_i(x_i)$. Therefore, Inequality~\eqref{eq:composite-strong} recovers Inequality~\eqref{eq:strongcase}, and we can take $\eta_i^k$ and $\gamma_i^k$ as in \eqref{eq:gammastrong}.
\end{remark}


\section{Extension to essentially cyclic rule}
\label{sec:essentially_cyclic}

In this section, we extend TITAN to allow the essentially cyclic rule in the block updates \revise{; see e.g., \cite{Xu2017,Tseng2001,Hong2017_BSUMcomplexity,Latafat2022}}. 
Instead of cyclically updating the $m$ blocks as in Algorithm~\ref{algo:iMM_multiblock}, the updated block of variables,   $i_k \in [m]$, is randomly or deterministically chosen. 
The essentially cyclic rule with interval $T\geq m$ imposes that each of the $m$ blocks is at least updated once every $T$ steps.  
Starting with two initial points $x^{-1}$ and $x^0$, at iteration $k$, $k\geq 0$, TITAN with essentially cyclic rule will update $x^k$ as follows: 
\begin{equation}
   \label{eq:Titan_ess_cyc}
       x^{k+1}_{i_k}\in\argmin_{x_{i_k}\in\mathcal X_{i_k}}\biggl\{u_{i_k}(x_{i_k},x^{k}) - \langle \mathcal G^k_{i_k}(x^{k}_{i_k}, x^{prev}_{i_k}),x_{i_k}\rangle + g_{i_k}(x_{i_k}) \biggr\},
   \end{equation}
  and set $x^{k+1}_a = x^{k}_a$ for all $a\neq {i_k}$. Here we use $x^{prev}_{i_k}$ to denote the value of block $i_k$ before it was updated to $x^k_{i_k}$. To simplify the presentation of our upcoming analysis, we  use the following notations: 
  \begin{itemize}
  
\item Starting from $x^0$, we split the generated sequence $\{x^k\}$ into partitions of $T$ consecutive iterates.  
We denote $\mathbf{x}^{k}$ the last iterate in every partition,  
that is, $\mbfx^{k} = x^{kT}$ for $k\geq 0$. 
We denote $\mbfx^{-1} = x^{-1}$. 

\item $\mbfx^{k,j}$ for $j \in [T]$ are the points within the sequence  $\{x^k\}$ lying between $\mbfx^{k}$ and $\mbfx^{k+1}$, that is, $\mbfx^{k,j}= x^{kT+j}$.

\item Since a block may not be updated in some consecutive iterations, we denote $\barmx^{k,l}_i$ the value of block $i$ after it has been updated $l$ times with the $k$-th partition $$[\mbfx^{k}, \mbfx^{k,1},\ldots, \mbfx^{k,T-1}, \mbfx^{k+1}=\mbfx^{k,T}].$$ In other words, $\barmx^{k,l}_i$ records the value of the $i$-th
block when it is actually updated. The
previous value of block $i$ before it is updated to $\barmx^{k,l}_i$  ( which is $x_i^{k,j}$ for some $j$)
is $\barmx^{k,l-1}_i$ (which is $x_i^{k,j-1}$). Correspondingly, we use $d^k_i$ to denote the total number of times the $i$-th block is updated during the $k$-th partition. 

\item $\mbfx^{k}_{prev}$ stores the previous values of the blocks of $\mbfx^{k}$, that is, $ (\mbfx^{k+1}_{prev})_i=\barmx_i^{k,d_i^k-1}$.
   \end{itemize}
Using these notations, we express the generated sequence $\{x^n\}_{n\geq 0}$ as the following sequence $\{\mbfx^{k,j}\}_{k\geq 0, j=0,\ldots,T-1}$:
  \begin{equation}
  \label{xsequence}
  \ldots,\mbfx^{k}=\mbfx^{k,0}, \mbfx^{k,1},\ldots, \mbfx^{k,T-1}, \mbfx^{k+1}=\mbfx^{k,T},\ldots
  \end{equation}
So $x^n=\mbfx^{k,j}$ with $k=\lfloor\frac{n}{T}\rfloor$ being the largest integer number that does not exceed $\frac{n}{T}$. 
Let us now translate the~\eqref{requirement} using this notation. 
The inequality~\eqref{requirement} for updating block $i$ in the $k$-th partition becomes
\begin{equation}
\label{eq:condition1}
F(\mbfx^{k,j-1}) +  \frac{\gamma^{(\mbfx^{k,j-1})}_i}{2}\|\mbfx^{k,j-1}_i-x_i^{prev}\|^2 \geq F(\mbfx^{k,j})+ \frac{\eta^{(\mbfx^{k,j-1})}_i}{2}\|\mbfx^{k,j}_i-\mbfx_i^{k,j-1}\|^2. 
\end{equation}
Note that $ x^{prev}_i$, $\mbfx_i^{k,j-1}$ and $\mbfx_i^{k,j}$ are three consecutive points of  $\{\barmx^{k,l}_i\}_{l=-1,\ldots,d_i^k}$.  We remark that $\barmx^{k,-1}_i=(\mbfx^k_{prev})_i$. So if $\mbfx_i^{k,j-1}$ is $\barmx^{k,l-1}_i$ then $\barmx^{k,l-2}_i=x^{prev}_i$ and \mbox{$\barmx^{k,l}_i=\mbfx_i^{k,j}$}. Inequality~\eqref{eq:condition1} is rewritten as
\begin{equation}
\label{eq:condition_2}
F(\mbfx^{k,j-1})+ \frac{\bar\gamma^{k,l-1}_i}{2}\|\barmx^{k,l-1}_i-\barmx^{k,l-2}_i\|^2 \geq F(\mbfx^{k,j})+ \frac{\bar\eta^{k,l-1}_i}{2}\|\barmx^{k,l}_i-\barmx^{k,l-1}_i\|^2, 
\end{equation}
where $\bar\eta^{k,l-1}_i=\eta^{(\mbfx^{k,j-1})}_i$ and $\bar\gamma^{k,l-1}_i=\gamma^{(\mbfx^{k,j-1})}_i$. 
All the convergence results so far still hold for TITAN with the essentially cyclic update rule. For example, the following proposition has the same essence as Proposition~\ref{prop:sufficient_decrease}.

\begin{proposition}
\label{prop:sufficient_decrease_2}
Considering TITAN with essentially cyclic rule, let $\{\mbfx^{k,l} \}$ be the generated sequence of TITAN, see \eqref{xsequence}. Assume that the parameters are chosen such that the conditions in~\eqref{eq:condition_2}(or its equivalent form in~\eqref{eq:condition1}), and Assumption~\ref{assump:surrogate_assum} is satisfied. Furthermore, suppose for $k=0,1,\ldots$ and $l \in [d^k_i]$,  we have  
\begin{equation}
\label{chooseparameter_2}
\bar \gamma_i^{k,l} \leq C{\bar\eta}_i^{k,l-1} ,
\end{equation} 
for some constant  $0< C <1$. Let $\bar\eta_i^{0,-1}=\bar\gamma_i^{0,0}/C$.  

(A) We have
    \begin{equation}
   \label{eq:finite_length}
  F(\mbfx^{K})+ (1-C)\sum_{k=0}^{K-1} \sum_{i=1}^m  \sum_{l=1}^{d_i^k} \frac{{\bar\eta}_i^{k,l-1} }{2} \|\barmx^{k,l}_i-\barmx^{k,l-1}_i\|^2
\leq F(\mbfx^{0})+  C\sum_{i=1}^m \frac{{\bar\eta}_i^{0,-1} }{2} \|\mbfx^{0}_i-\mbfx^{-1}_i\|^2.
    \end{equation}

(B) If there exists positive number $\underline{l}$ such that $\min_{i,k,l}\big\{ \frac{{\bar\eta}_i^{k,l} }{2}\big\}\geq \underline{l}$,   then  
$$\sum_{k=0}^{+\infty}\sum_{i=1}^m \sum_{l=1}^{d^k_i}\|\barmx^{k,l}_i - \barmx^{k,l-1}_i\|^2 < +\infty.$$ 

\end{proposition}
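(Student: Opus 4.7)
The plan is to follow the template of Proposition~\ref{prop:sufficient_decrease}: combine the coupling~\eqref{chooseparameter_2} with the one-step estimate~\eqref{eq:condition_2} to obtain a nearly-telescoping inequality, then sum over all iterations. Concretely, for each iteration in the $k$-th partition that updates block $i$ for its $l$-th time, the transition $\mbfx^{k,j-1}\to\mbfx^{k,j}$ affects only block $i$, and combining~\eqref{eq:condition_2} with $\bar\gamma_i^{k,l-1}\le C\bar\eta_i^{k,l-2}$ yields
$$F(\mbfx^{k,j-1}) + \frac{C\bar\eta_i^{k,l-2}}{2}\|\barmx_i^{k,l-1}-\barmx_i^{k,l-2}\|^2 \;\ge\; F(\mbfx^{k,j}) + \frac{\bar\eta_i^{k,l-1}}{2}\|\barmx_i^{k,l}-\barmx_i^{k,l-1}\|^2.$$

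To sidestep the bookkeeping at partition boundaries (note that $\barmx_i^{k,-1}=(\mbfx^{k}_{prev})_i$ actually lives in the previous partition), I would reindex per block by cumulative update count: let $y_i^s$ denote the value of block $i$ after its $s$-th update since initialization, with $y_i^0=\mbfx_i^0$ and $y_i^{-1}=\mbfx_i^{-1}$, and write $\tilde\eta_i^{s-1},\tilde\gamma_i^{s-1}$ for the associated parameters. The inequality above becomes
$$F(x^{\mathrm{before}}) + \frac{C\tilde\eta_i^{s-2}}{2}\|y_i^{s-1}-y_i^{s-2}\|^2 \;\ge\; F(x^{\mathrm{after}}) + \frac{\tilde\eta_i^{s-1}}{2}\|y_i^s-y_i^{s-1}\|^2,$$
with one such inequality contributed by each of the $KT$ iterations up to $\mbfx^{K}$.

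Summing all these inequalities, the $F$ values telescope to $F(\mbfx^0)-F(\mbfx^K)$. For each block $i$, shifting the LHS index $s\mapsto s-1$ aligns the LHS inertial sum with the RHS inertial sum except for two residues: a boundary term $\tfrac{C\tilde\eta_i^{-1}}{2}\|y_i^0-y_i^{-1}\|^2$ surviving on the LHS (where $\tilde\eta_i^{-1}$ matches $\bar\eta_i^{0,-1}$), and a factor $(1-C)$ multiplying the remaining inertial sum on the RHS. Collecting these residues and rearranging recovers precisely~\eqref{eq:finite_length}.

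Part (B) is then immediate: because $F$ is bounded from below and $\bar\eta_i^{k,l-1}/2\ge\underline{l}>0$ by hypothesis, letting $K\to\infty$ in~\eqref{eq:finite_length} forces the full series of squared increments to converge. The only delicate point I anticipate is making sure each $\bar\gamma_i^{k,l-1}\|\cdot\|^2$ term on the LHS is correctly paired with the $\bar\eta_i$ term coming from block $i$'s previous update, which may lie in the preceding partition; the cumulative-count reindexing renders this pairing transparent and the rest is mechanical.
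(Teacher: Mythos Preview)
Your proposal is correct and follows essentially the same approach as the paper: both start from the one-step estimate~\eqref{eq:condition_2}, replace $\bar\gamma_i^{k,l-1}$ by $C\bar\eta_i^{k,l-2}$ via~\eqref{chooseparameter_2}, and sum so that the $F$ values telescope while the inertial terms leave a $(1-C)$ bulk plus a single boundary residue. The only difference is bookkeeping: the paper first sums within each partition (over $j=1,\dots,T$) to obtain an inequality involving the boundary terms $\|\mbfx^{k}_i-(\mbfx^{k}_{prev})_i\|^2$ and then telescopes those across partitions, whereas your cumulative per-block index $s$ collapses both summations into one and makes the cross-partition pairing automatic.
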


\begin{proof}
See Appendix~\ref{proof-essential}
\end{proof}

A subsequence $\{x^{k_n}\}$ of $\{x^n\}_{n\geq0}$ when being expressed as $\mbfx^{k,l}$ (see \eqref{xsequence}) is $\{\mbfx^{\bar k_n,l_n}\}$ with $\bar k_n= \lfloor\frac{k_n}{T} \rfloor$ and $l_n=k_n-T \lfloor\frac{k_n}{T} \rfloor$.   We derive from Proposition~\ref{prop:sufficient_decrease_2} that if $\barmx^{k,l_k}_i$ converges to $x_i^*$ as $k$ goes to 0, then $\barmx^{k,l}_i$ also converges to $x_i^*$ for $l=1,\ldots,d_i^k$.  From this fact, we use the same technique in the proof of Theorem~\ref{thm:subsequential_converge} to establish the subsequential convergence. We omit the details here. 

For the global convergence, \revise{we follow the the proof of Theorem~\ref{thm:global_convergence} (see Appendix \ref{proof_global}). To do so, we need to define the following potential function}  
$$
\Phi^\delta(x,y) :=  \Phi(x) + \sum_{i=1}^m\frac{\delta_i}{2}\|x_i - y_i\|^2,
$$
\revise{define the following auxiliary sequence}
$$
\begin{array}{ll}
\varphi_k^2=\sum\limits_{i=1}^m \sum\limits_{l=0}^{d_i^{k}} \frac12\|\barmx_i^{k,l}-\barmx_i^{k,l-1}\|^2=\sum\limits_{i=1}^m \sum\limits_{l=1}^{d_i^{k}}\frac12 \|\barmx_i^{k,l}-\barmx_i^{k,l-1}\|^2 + \frac12\|\mbfx^k-\mbfx^k_{prev}\|^2, 
\end{array} 
$$
and let $\mathbf z^k=(\mbfx^k,\mbfx_{prev}^k)$. Then, we have 
 $$\begin{array}{ll}
\Phi^\delta(\mathbf z^k)-\Phi^\delta(\mathbf z^{k+1})=F(\mbfx^k) - F(\mbfx^{k+1})+\sum\limits_{i=1}^m\frac{\delta_i}{2}\|\mbfx^k_i-(\mbfx_{prev}^k)_i\|^2 - \sum\limits_{i=1}^m\frac{\delta_i}{2}\|\mbfx^{k+1}_i - (\mbfx_{prev}^{k+1})_i\|^2.
\end{array}$$
Similarly to Theorem~\ref{thm:global_convergence} we assume there exists $\overline{l}$ such that $\max_{i,k,l}\big\{ \frac{{\bar\eta}_i^{k,l} }{2}\big\}\leq \overline{l}$.  
Then, as for Theorem~\ref{thm:global_convergence}, 
we can  prove that the whole sequence $\{\mbfx^k\}$ converges to $x^*$ in the following two cases: 
$C<\underline{l}/\overline{l}$, 
or applying restarting steps for~\eqref{eq:Titan_ess_cyc}. Hence each sequence $\{\mbfx^k_i\}_{k\geq 0}$ converges to $x_i^*$ for $i\in [m]$. Finally, note that 
$$
\begin{array}{ll}
\|\mbfx^{k,j-1}-x^*\|^2&\leq (T-j+2)\big(\sum_{a=j-1}^{T-1}\|\mbfx^{k,a}-\mbfx^{k,a+1}\|^2+\|\mbfx^{k+1}-x^*\|^2\big)\\
&\leq  (T-j+2) \big(\sum_{i=1}^m\sum_{l=1}^{d^k_i} \|\barmx_i^{k,l-1}-\barmx_i^{k,l}\|^2 + \|\mbfx^{k+1}-x^*\|^2\big).
\end{array}
$$
Together with Proposition~\ref{prop:sufficient_decrease_2}(B) it implies that the whole sequence $\{x^k\}$ converges.

\section{Numerical results}  \label{sec:experiment}

In this section, we apply TITAN to the sparse NMF~\eqref{sparseNMF} and the MCP~\eqref{MF}. All tests are preformed using Matlab
R2019a on a PC 2.3 GHz Intel Core i5
of 8GB RAM. The code is available from \url{https://github.com/nhatpd/TITAN}.

\subsection{Sparse Non-negative Matrix Factorization}
\label{sec:sparseNMF}

Let us consider the sparse NMF problem~\eqref{sparseNMF}, with two blocks of variables: $x_1 = U$ and $x_2 = V$. 
The functions $\nabla_U f(U,V)=(UV-M) V^T$ and $\nabla_V f(U,V) = U^T (UV-M)$ are   Lipschitz continuous with constants $L_1=\|VV^T\|$ and $L_2=\|U^TU\|$, respectively. Hence we choose the block Lipschitz surrogate for $f$ as in Section~\ref{ex:Lipschitz_surrogate}. 
Let us also choose the Nesterov-type acceleration as in Section~\ref{sec:recover_Nesterov}. The corresponding update in \eqref{eq:iMM_update} for $U$ is 
\begin{align*}
U^{k+1}=\argmin_{U} \iprod{\nabla_U f(\bar U^k,V^k)}{U}  + \frac{\kappa_1 L_1^k}{2} \|U-\bar U^k\|^2 + g_1(U),
\end{align*}
where  $\kappa_1>1$ is a constant, $\bar U^k=U^k + \beta_1^k(U^k-U^{k-1})$, $L_1^k=\|V^k (V^k)^T\|$, and the corresponding update for $V$ is
$$\begin{array}{ll}
V^{k+1}&=\argmin_{V} \iprod{\nabla_V f(U^{k+1},\bar V^k)}{V}  + \frac{ L_2^k}{2} \|V-\bar V^k\|^2 + g_2(V)\\
&=\big[\bar V^k- \frac{1}{ L_2^k}\nabla_V f(U^{k+1},\bar V^k) \big]_+,
\end{array}$$
where $\bar V^k=V^{k} + \beta_2^k(V^k-V^{k-1})$, $L_2^k=\|(U^{k+1})^TU^{k+1}\|$ and $[a]_+$ denotes $\max\{a,0\}$. 
It was shown in~\cite{Bolte2014} that the update of $U$ has the form 
$$U^{k+1}= \mathcal T_s\Big(\big[ \bar U^k- \frac{1}{\kappa_1 L_1^k} \nabla_U f(\bar U^k,V^k)\big]_+\Big),$$
where $\mathcal{T}_s(a)$ keeps the $s$ largest values of $a$ and sets the remaining values of $a$ to zero. 

Let us now determine $\eta_i^k $ and $\gamma_i^k$ for $i=1,2$, of Condition~\eqref{parameter}. Note that $f(\cdot,V)$,  $f(U,\cdot)$ and  $g_2(\cdot)$ are convex functions but $g_1(\cdot)$ is nonconvex. It follows from  Section~\ref{sec:recover_Nesterov} that $\rho^k_1(V)=(\kappa_1-1)L_1^k$ and $ A^k_1=\kappa_1 \beta_1^k L_1^k$ for the block $U$ surrogate functions. Applying Theorem~\ref{thrm:sufficient_uihi}, we get $\eta_i^k$ and $\gamma_i^k$, and the condition~\eqref{parameter} for block $U$ becomes
$
\beta_1^k \leq \frac{\kappa_1-1}{\kappa_1}\sqrt{\frac{C\nu_1 (1-\nu_1)L_1^{k-1}}{L_1^k}},
$
where $0<C,\nu_1<1$. 
 Considering block $V$, as both  $f(U,\cdot)$ and  $g_2(\cdot)$ are convex, it follows from Remark~\ref{remark:Lipschitz} that $\gamma_2^k=L_2^k(\beta_i^k)^2 $ and $\eta_2^k=L_2^k$. Hence, the condition~\eqref{parameter} for block $V$ becomes
$
\beta_2^k\leq \sqrt{\frac{C L_2^{k-1}}{L_2^k}},
$
where $0<C<1$. 
 In our experiments, we choose
 $$ 
\begin{array}{ll}
& C=0.9999^2, \mu_0=1, \mu_k=\frac12(1+\sqrt{1+4\mu_{k-1}^2}),\nu_1=1/2, \\
&\beta_1^k =\min\Big\{\frac{\mu_{k-1}-1}{\mu_k},\frac{\kappa_1-1}{\kappa_1}\sqrt{\frac{C\nu_1 (1-\nu_1)L_1^{k-1}}{ L_1^k}}\Big\}, 
\beta_2^k =\min\Big\{\frac{\mu_{k-1}-1}{\mu_k},\sqrt{\frac{C L_2^{k-1}}{ L_2^k}}\Big\}. 
\end{array}
$$
Since TITAN also works with essentially cyclic rule, in our experiment, we update $U$ several times before updating~$V$ and vice versa. As explained in~\cite{Gillis2012}, repeating update~$U$ or $V$ accelerates the algorithm compared to the cyclic update since the terms $V V^T$ and $MV^T$ in the gradient of~$U$ (resp.\ the terms $U^TU$ and $U^T M$ in the gradient of~$V$) do not need to be re-evaluated hence the next evaluation of the gradient only requires $O(\mathbf m \mathbf r^2)$ (resp.\ $O(\mathbf n \mathbf r^2)$) operations in the update of $U$ (resp.~$V$) compared to $O(\mathbf m \mathbf n \mathbf r)$ of the cyclic update. 
In our experiments, we use $\kappa_1=1.0001$ and use ``TITAN - $\kappa=1.0001$"  to denote the respective TITAN algorithm. As we do not use restarting, the TITAN algorithm guarantees a sub-sequential convergence. To verify the effect of inertial terms, we compare our TITAN algorithms with its non-inertial version, which is the proximal alternating linearized minimization (PALM) proposed in~\cite{Bolte2014}. 

\revise{It is worth mentioning iPALM which is another inertial version of PALM proposed by \cite{Pock2016}. We observe from Section 5.1 of the paper that iPALM with dynamic inertial parameters much outperforms other variants of iPALM that use constant inertial parameters, and iPALM using constant inertial parameters just perform similarly to PALM. However, the convergence analysis of \cite{Pock2016} does not support the setting of iPALM with dynamic inertial parameters. As our main purpose of this section is to verify the effect of inertial terms of our TITAN algorithms (note that the inertial parameters $\beta_1^k$ and $\beta_2^k$ of TITAN are dynamic, and we still have convergence guarantee), we will only report the performance of TITAN algorithms and PALM in the following.}

\paragraph{Dense facial images data sets} In the first experiment, we test the algorithms on four facial image data sets: 
Frey\footnote{\url{https://cs.nyu.edu/~roweis/data.html}} (1965 images of dimension  $28 \times 20$), CBCL\footnote{\url{http://cbcl.mit.edu/software-datasets/heisele/facerecognition-database.html}}  
(2429 images of dimension 19 $\times$ 19), Umist\footnote{\url{https://cs.nyu.edu/~roweis/data.html}} (575 images of dimension  $92 \times 112$),  and ORL\footnote{\url{ https://cam-orl.co.uk/facedatabase.html}} (400 images of dimension  $92 \times 112$). 
We choose $\mbfr=25$ and take a sparsity of $s$ equal to 0.25$\mbfr$, that is, each column of $U$ contains at most 25\% non-zero entries. 
For each data set, we run all the algorithms 20 times, use the same initialization each time for all algorithms which is generated by the Matlab commands $W=rand(\mbfm,\mbfr)$ and $H=rand(\mbfr,\mbfn)$, and run each algorithm for 100 seconds for the Frey and CBCL data sets, and 300 seconds for the Umist and ORL data sets. We define the relative error as $\|M-UV\|_F/\|M\|_F$. Figure~\ref{fig:sparsenmf} reports the evolution with respect to time of the average values of $E(k):=\|M-U^kV^k\|_F/\|M\|_F - e_{\min}$, where $e_{\min}$ is the smallest value of all the relative errors in all runs. Table~\ref{tab:sparsenmf} reports the average and the standard deviation (std) of the relative errors.  
\begin{figure*}[ht]
\begin{center}
\begin{tabular}{cc}
Frey & CBCL\\
\includegraphics[width=0.43\textwidth]{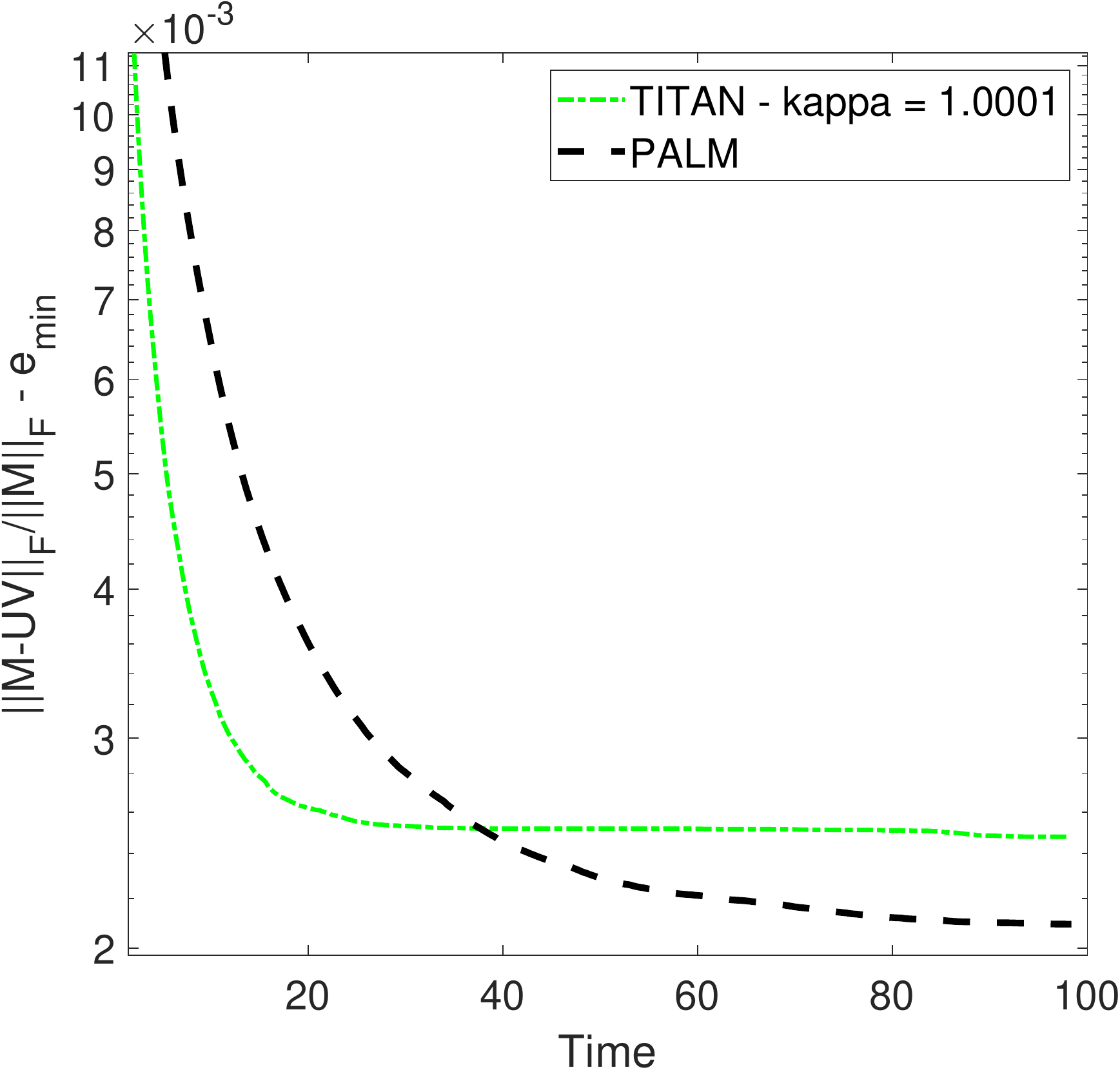}  & 
\includegraphics[width=0.421\textwidth]{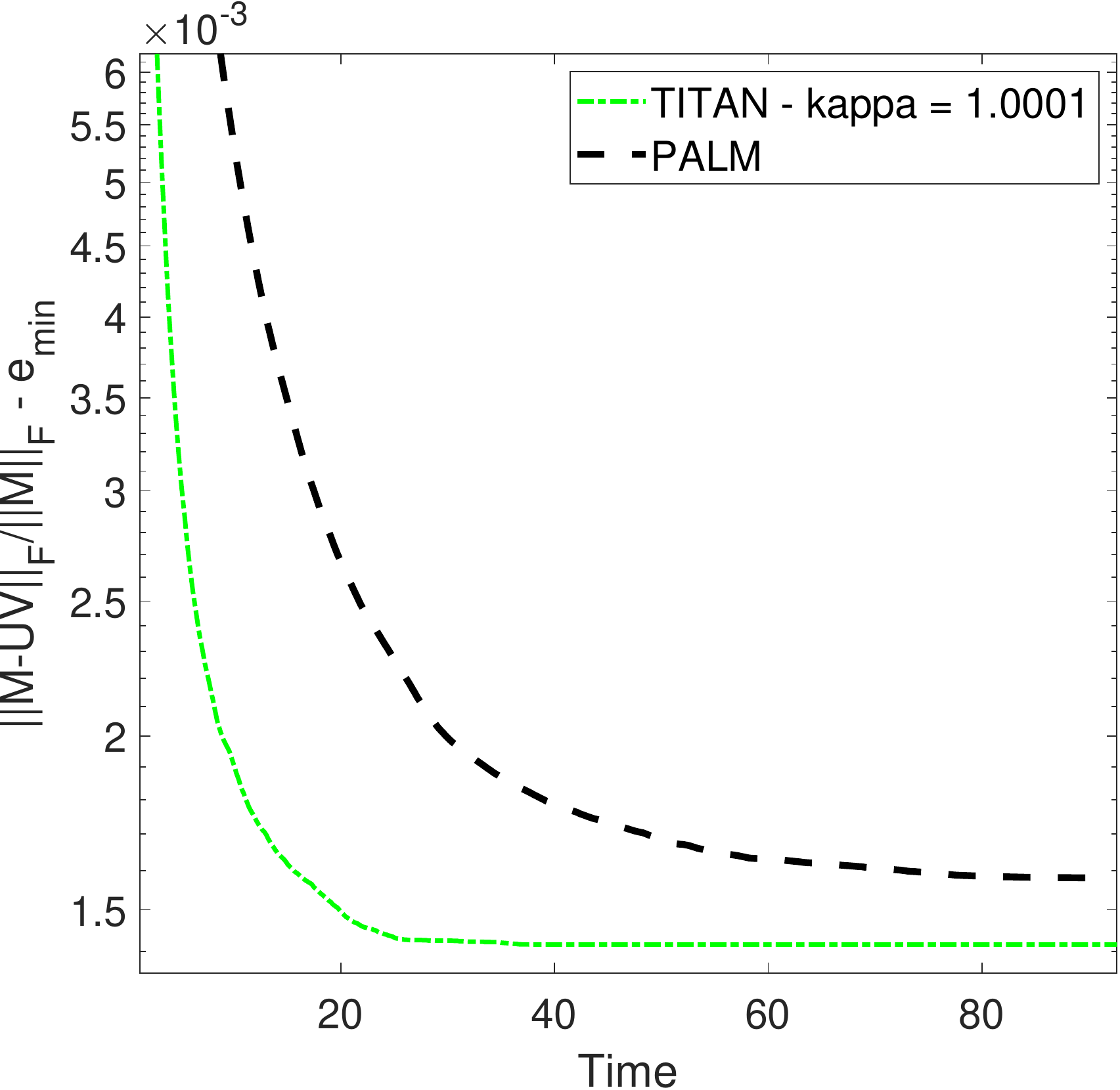}\\
Umist & ORL\\
\includegraphics[width=0.433\textwidth]{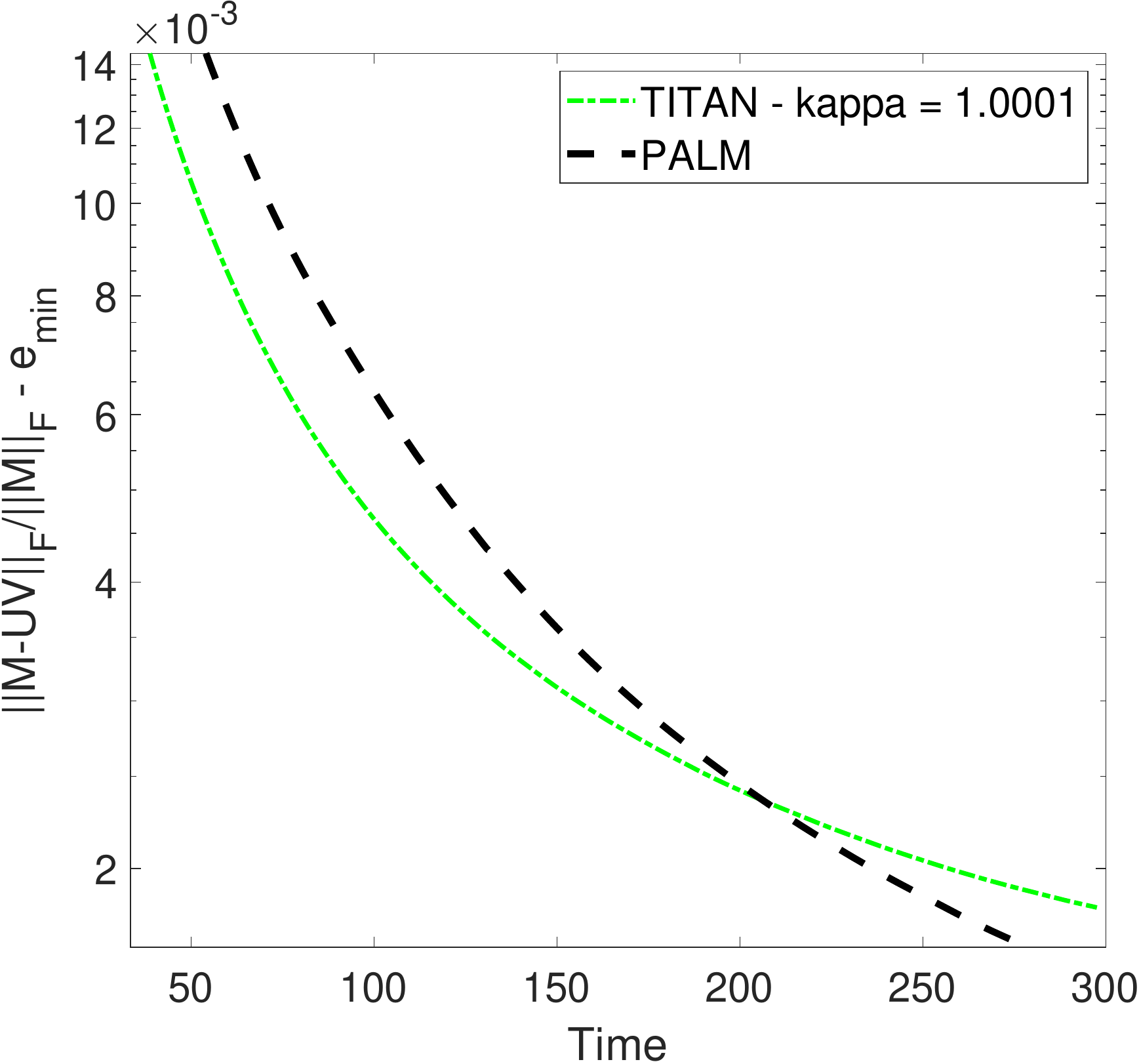}  & 
\includegraphics[width=0.425\textwidth]{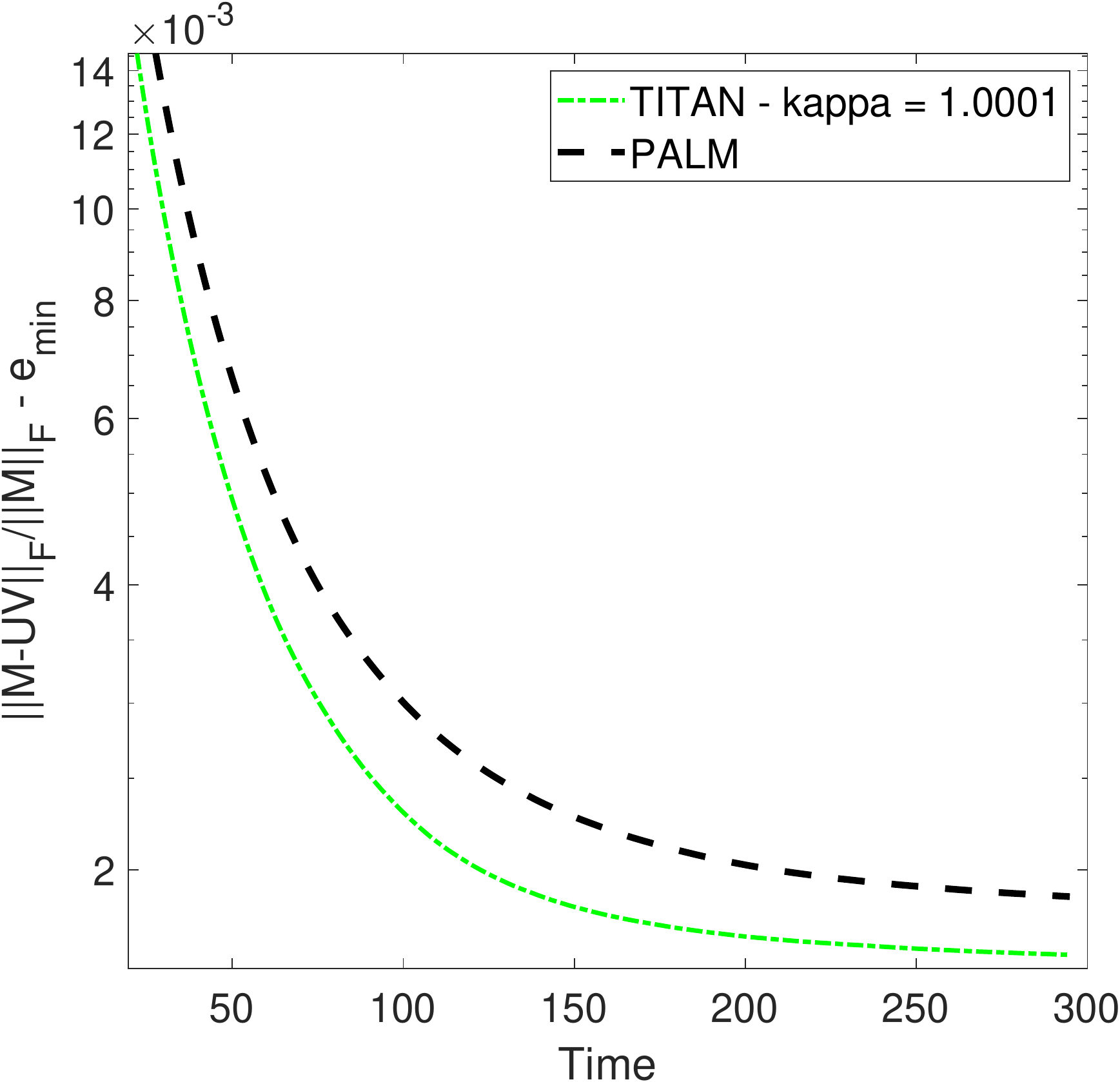}
\end{tabular}
\caption{TITAN and PALM applied on sparse NMF.  
The plots show the evolution of the average value of $E(k)$ with respect to time on the image data sets. 
\label{fig:sparsenmf} } 
\end{center}
\end{figure*}

\begin{table}[tbh!]
\centering
\caption{Average and std of relative errors obtained by TITAN and PALM applied on sparse NMF~\eqref{sparseNMF}. 
Bold values correspond to the best results for each data set. 
\label{tab:sparsenmf}}
\begin{tabular}{@{}cll@{}}
\toprule
Data set      & Method &    mean $\pm$ std \\ 
\midrule
   \multirow{2}{*}{Frey}   & PALM &  $ \mathbf{1.4901 \, 10^{-1} \pm 1.0342\, 10^{-3}}$   \\
  & TITAN - $\kappa=1.0001$ &  $1.4939\, 10^{-1} \pm 1.0448\, 10^{-3}$  \\
  
  \midrule
  \multirow{2}{*}{cbclim} & PALM  & $1.1955\, 10^{-1} \pm 7.4322\, 10^{-4}$ \\
  & TITAN - $\kappa=1.0001$ &  $\mathbf{1.1939\, 10^{-1} \pm 7.1868	\, 10^{-4}}$   \\
  
  \midrule
  \multirow{2}{*}{Umist} & PALM  & $\mathbf{ 1.2002 \ 10^{-1} \pm 8.1340\, 10^{-4}}$ \\
  & TITAN - $\kappa=1.0001$ &  $1.2031 \, 10^{-1} \pm 9.3527	\, 10^{-4}$   \\

  \midrule
  \multirow{2}{*}{ORL} & PALM  & $1.9108\, 10^{-1} \pm 6.5507\, 10^{-4}$ \\
  & TITAN - $\kappa=1.0001$ &  $ \mathbf{1.9084 \, 10^{-1} \pm 8.4325	\, 10^{-4}}$   \\
  
  \hline
\end{tabular}
\end{table}

We observe that TITAN - $\kappa=1.0001$ converges initially faster than PALM for all data sets.  
In term of the accuracy of the final solutions, TITAN - $\kappa=1.0001$ provides better relative errors on average for the CBCL and ORL data sets, while PALM for the Frey and Umist data sets. This is expected since sparse NMF is a hard nonconvex problem, and hence different algorithms converge towards different critical points with different objective function values (even if they are initialized with the same solution).

\paragraph{Sparse document data sets}  In the second experiment, we test the two algorithms on six sparse document data sets: classic, sports, reviews, hitech, k1b and tr11, see~\cite{ZG05}. We choose $\mathbf r=25$, $s= 0.25\mbfr$, 
run all algorithms 20 times, use the same random initialization for all algorithms in each run, and run each algorithm for 100 seconds.  Figure~\ref{fig:document} reports the evolution with respect to time of the average values of $E(k)$. Table~\ref{tab:document}  reports the average and the standard deviation (std) of the relative errors. 

\begin{figure*}[ht]
\begin{center}
\begin{tabular}{cc}
classic & sports\\
\includegraphics[width=0.35\textwidth]{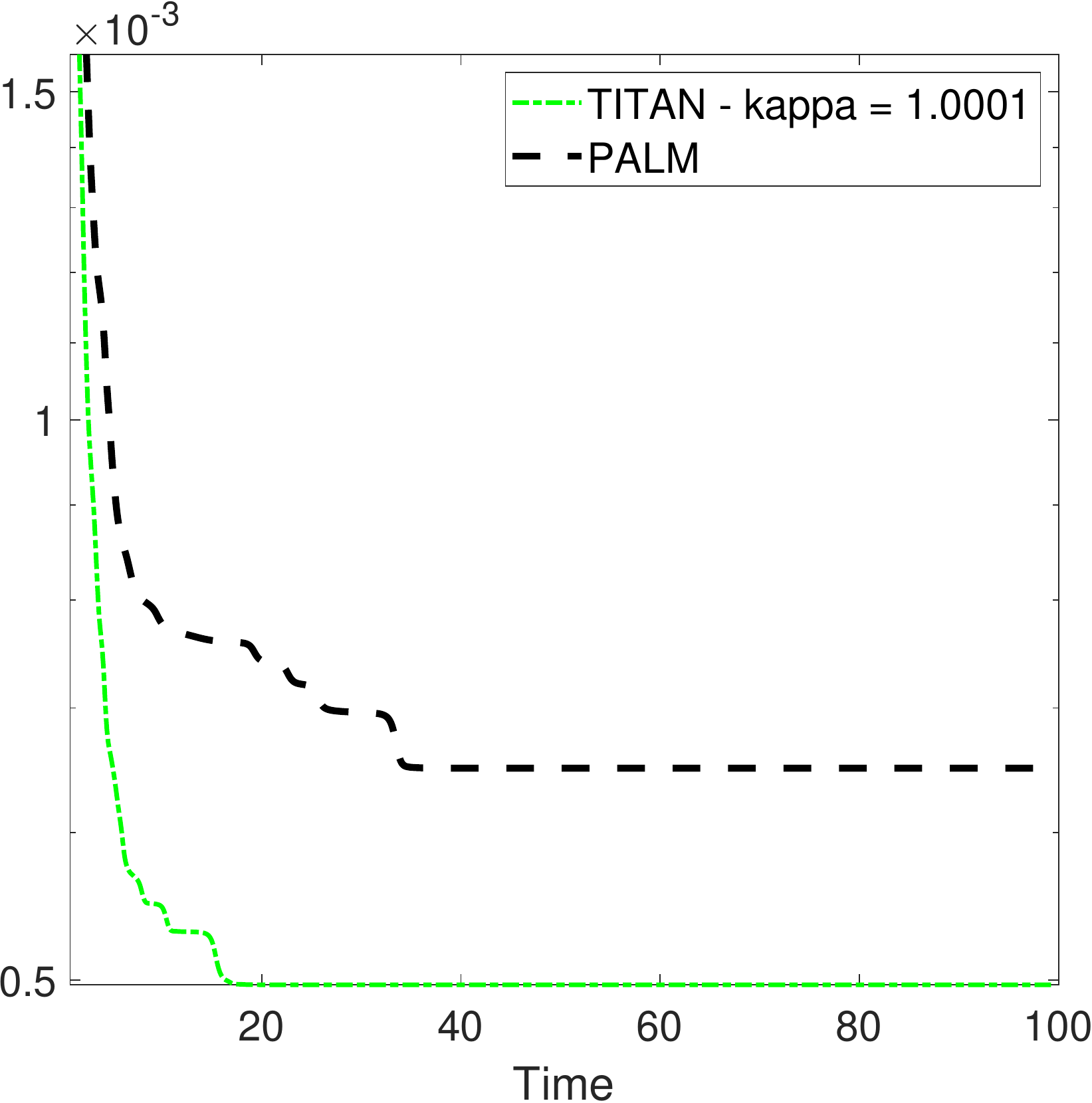}  & 
\includegraphics[width=0.375\textwidth]{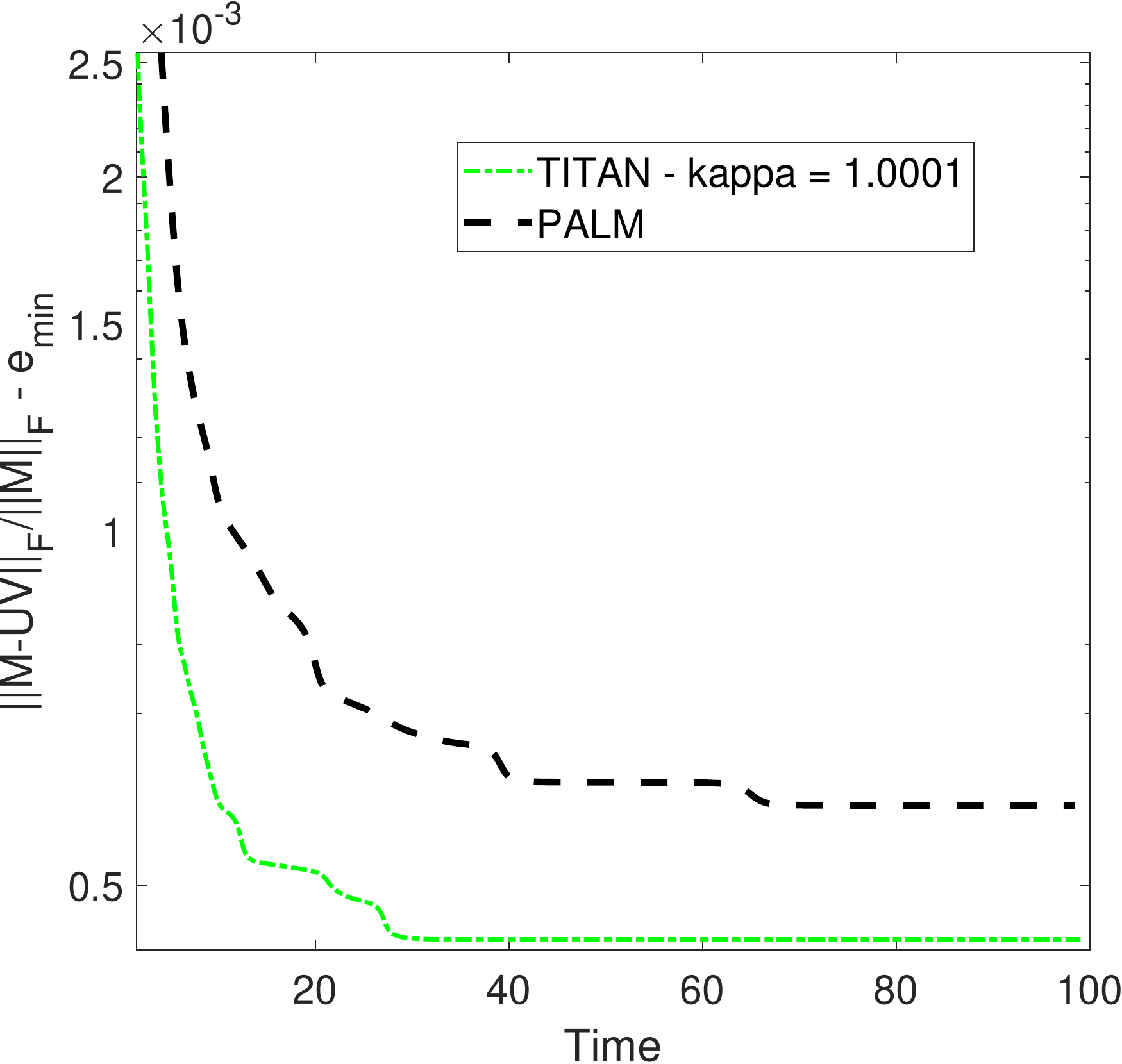}
\\
reviews & hitech\\
\includegraphics[width=0.37\textwidth]{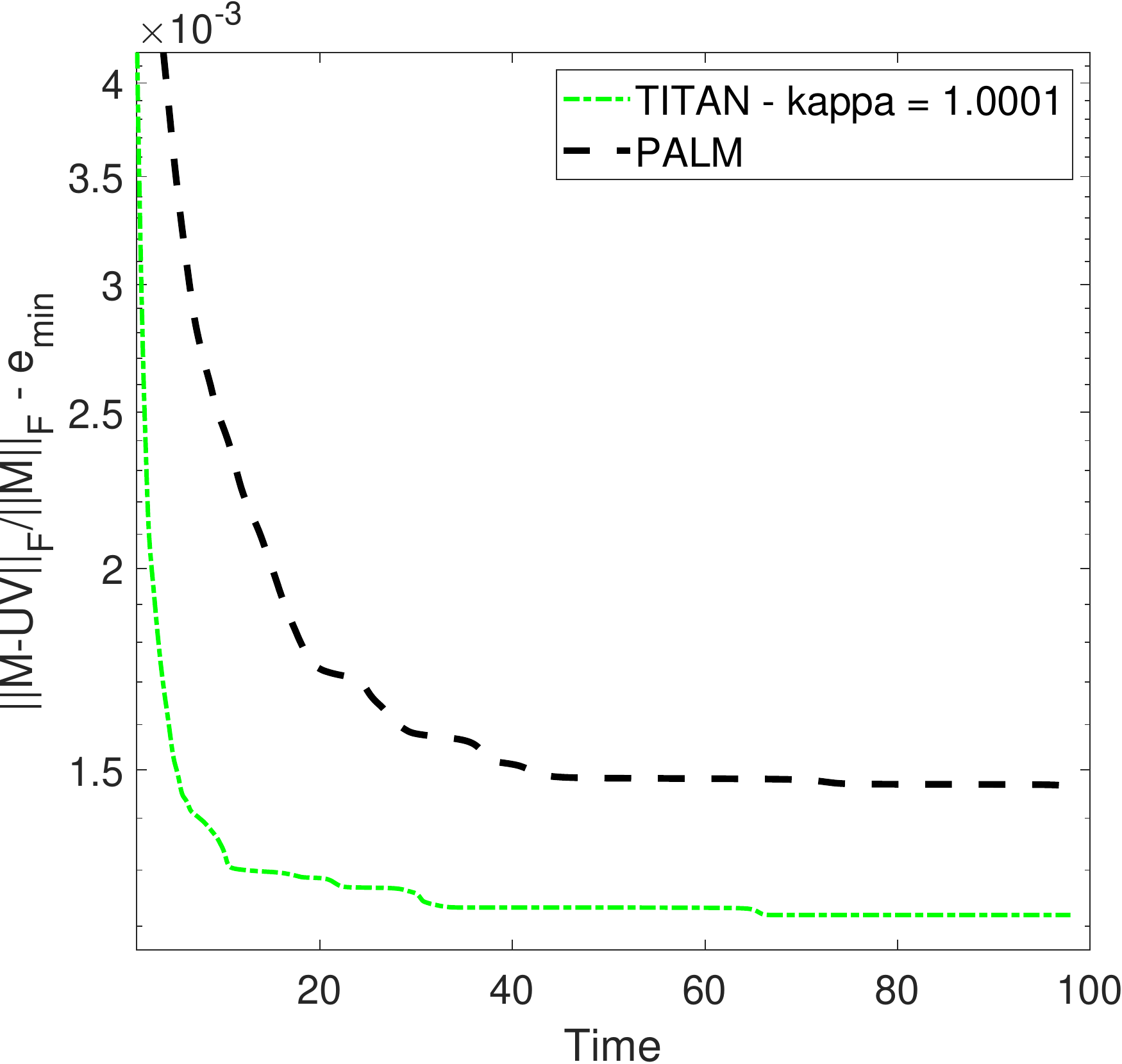}  & 
\includegraphics[width=0.375\textwidth]{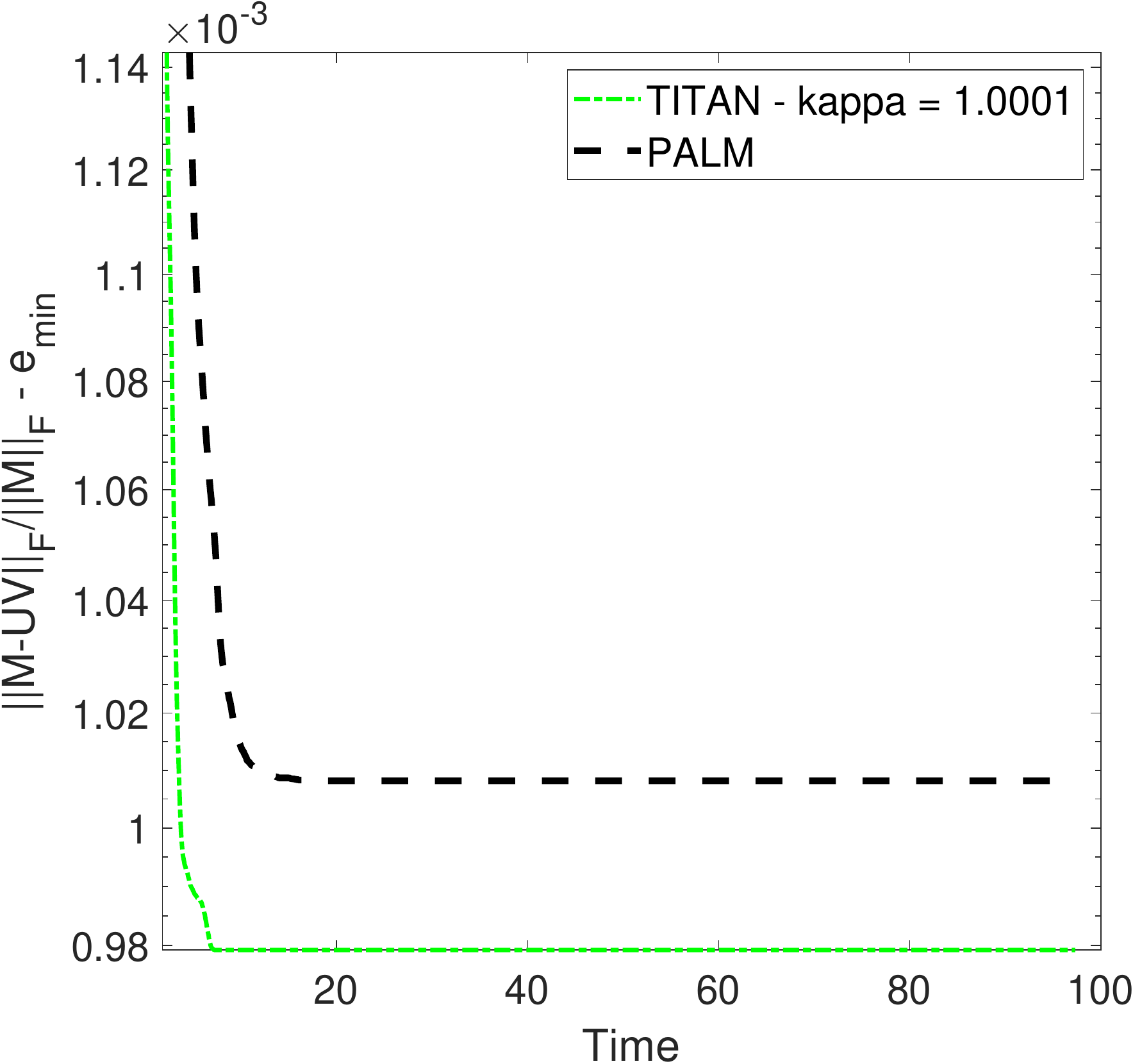}
\\
k1b & tr11\\
\includegraphics[width=0.36\textwidth]{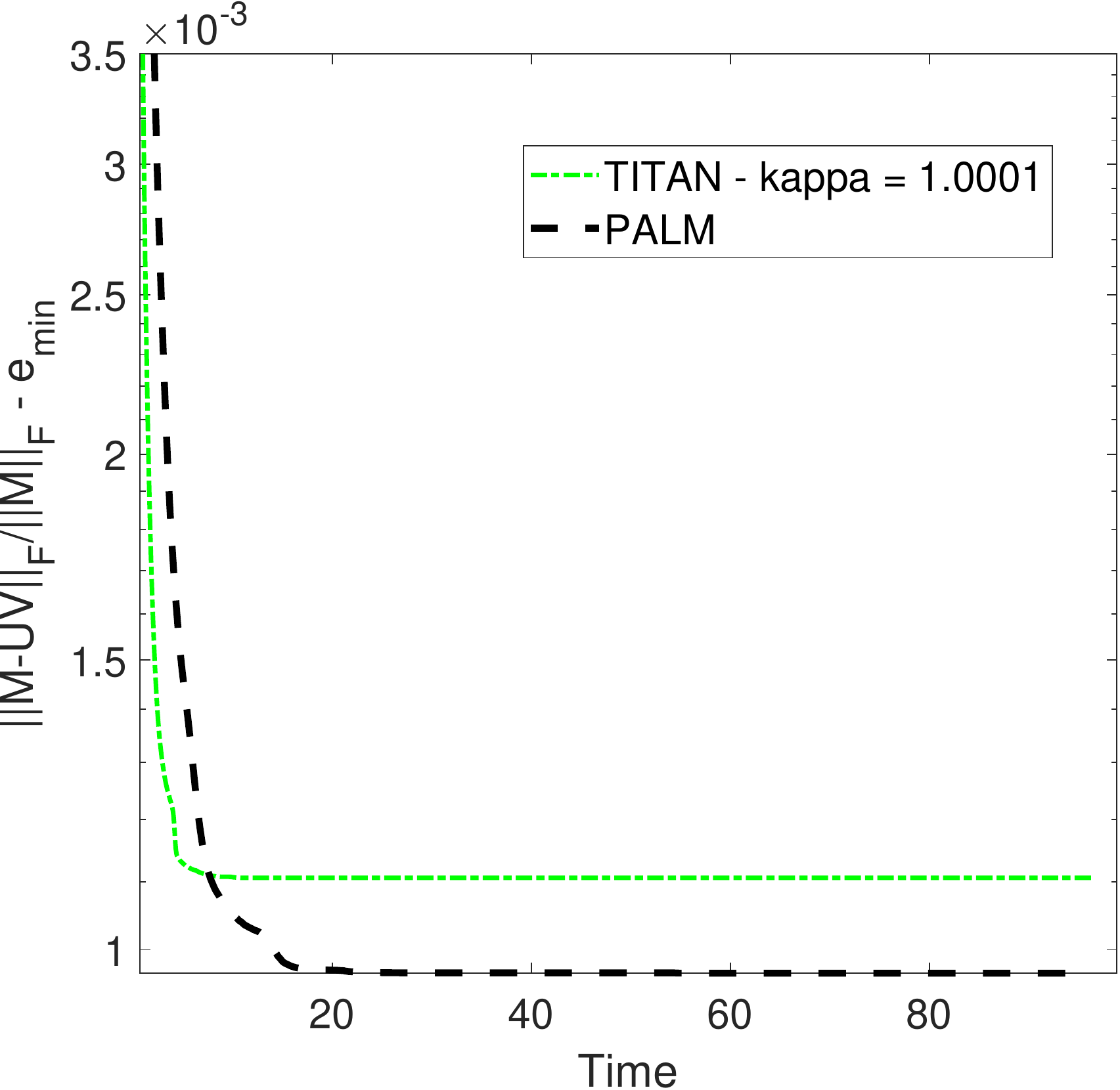}  & 
\includegraphics[width=0.355\textwidth]{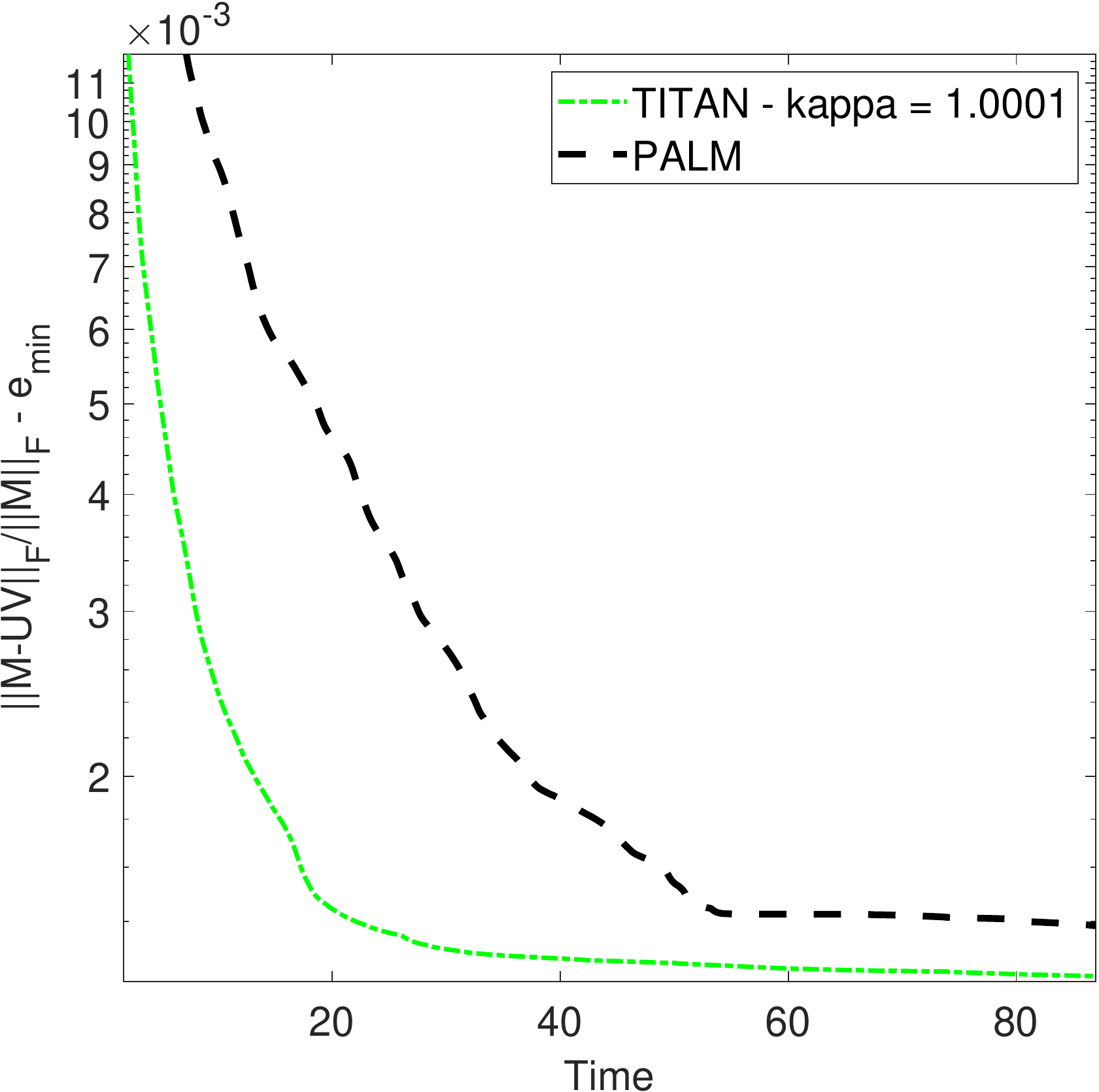}
\end{tabular}
\caption{TITAN and PALM applied on sparse NMF.  
The plots show the evolution of the average value of $E(k)$ with respect to time on the sparse document data sets. 
\label{fig:document}} 
\end{center}
\end{figure*}

\begin{table}[tbh!]
\centering
\caption{Average and std of relative errors obtained by TITAN and PALM applied on sparse NMF~\eqref{sparseNMF}. 
Bold values correspond to the best results for each data set. 
\label{tab:document}}
\begin{tabular}{@{}cll@{}}
\toprule
Data set      & Method &    mean $\pm$ std \\ 
\midrule
   \multirow{2}{*}{classic}   & PALM &  $ 8.9160 \, 10^{-1} \pm 7.4522\, 10^{-4}$   \\
  & TITAN - $\kappa=1.0001$ &  $\mathbf{8.9145\, 10^{-1} \pm 3.1633\, 10^{-4}}$  \\
  
  \midrule
  \multirow{2}{*}{sports} & PALM  & $8.1190\, 10^{-1} \pm 4.3938\, 10^{-4}$ \\
  & TITAN - $\kappa=1.0001$ &  $\mathbf{8.1177\, 10^{-1} \pm 2.9569	\, 10^{-4}}$   \\
  
  \midrule
  \multirow{2}{*}{reviews} & PALM  & $ 8.0803 \ 10^{-1} \pm 5.6695\, 10^{-4}$ \\
  & TITAN - $\kappa=1.0001$ &  $\mathbf{8.0779 \, 10^{-1} \pm 7.0906	\, 10^{-4}}$   \\
  
  \midrule
  \multirow{2}{*}{hitech} & PALM  & $8.6305\, 10^{-1} \pm 5.5024\, 10^{-4}$ \\
  & TITAN - $\kappa=1.0001$ &  $ \mathbf{8.6302 \, 10^{-1} \pm 6.2594	\, 10^{-4}}$   \\
  
   \midrule
  \multirow{2}{*}{k1b} & PALM  & $ \mathbf{8.1829 \ 10^{-1} \pm 6.1890\, 10^{-4}}$ \\
  & TITAN - $\kappa=1.0001$ &  $8.1842 \, 10^{-1} \pm 7.5261	\, 10^{-4}$   \\
  
   \midrule
  \multirow{2}{*}{tr11} & PALM  & $ 1.4768 \ 10^{-1} \pm 7.4810\, 10^{-4}$ \\
  & TITAN - $\kappa=1.0001$ &  $\mathbf{1.4752 \, 10^{-1} \pm 5.3136	\, 10^{-4}}$   \\
  
  \hline
\end{tabular}
\end{table}

We again observe that TITAN - $\kappa=1.0001$ converges on average faster than PALM in all data sets.  In terms of the relative errors of the final solutions computed within the allotted time, 
TITAN - $\kappa=1.0001$ performs on average better than PALM, except for the  k1b data set.

\subsection{Matrix Completion Problem}
\label{sec:MCP}

In this section, we illustrate the advantages of using block surrogate functions by deploying TITAN for the MCP~\eqref{MF}, as explained in Section~\ref{ex:composite_surrogate}. 
As for sparse NMF, we use two blocks of variables, $x_1 = U$ and $x_2 = V$. 
Since $\psi(U,V)$ is continuously differentiable and $\mathcal R(U,V)$ is a block separable function, $F$ (in this case $F=f$) satisfies the condition in~\eqref{partialF}.   Moreover, $\phi$ is block-wise concave and differentiable with Lipschitz gradient on $\mathbb R^{\mbfm\times\mbfn}_+$. Hence, we select the composite surrogate function for $f = \psi + \phi\circ r$ as in Section~\ref{ex:composite_surrogate}, in which we will choose block surrogate functions for $\psi$ as follows. 
Since $\nabla_U \psi(U,V) = -\mathcal P(A -UV)V^T$ and 
$\nabla_V \psi(U,V) = -U^T\mathcal P(A -UV)$ are 
 Lipschitz continuous with constants $L_1=\|VV^T\|$ and $L_2 = \|U^TU\|$, respectively, we choose the block surrogate functions $u_i^\psi$, $i=1,2$, for $\psi$ to be the block Lipschitz gradient surrogate functions as in Section~\ref{ex:Lipschitz_surrogate}. 
  Assumption \ref{assump:surrogate_assum} is then satisfied; see Section~\ref{sec:compositeTITAN}. 
  
 Let us choose the Nesterov-type acceleration.  
 The update in~\eqref{eq:iMM_update} for $U$ is 
\begin{equation}\label{sub:mc}
\begin{split}
   U^{k+1} \in \argmin_U\iprod{\nabla_U \psi(\bar U^k,V^k)}{U}  + \frac{L_1^k}{2} \|U-\bar U^k\|^2 + \langle \nabla_U\phi(r(U^k,V^k)), |U|\rangle,
\end{split}
\end{equation}
where $\nabla_U\phi(r(U^k,V^k)) = \lambda\theta\left(\exp(-\theta\|u_{ij}^k\|)\right)$,  $L_1^k=\|V^k (V^k)^T\|$, $\bar U^k=U^k + \beta_1^k(U^k-U^{k-1})$.
The solution of  \eqref{sub:mc} is given by
\begin{equation}
    U^{k+1} = \mathcal{S}_{1/L_1^k}\left(P^k,\nabla_U\phi\left(r\left(U^k,V^k\right)\right)\right),
\end{equation}
where $P^k = \bar U^k - \frac{1}{L_1^k}\nabla_U \psi(\bar U^k,V^k)$,  and $\mathcal{S}_\tau$ is the soft-thresholding operator with parameter $\tau$, that is, 
\begin{equation}
    \mathcal{S}_\tau(P,W)_{ij} = [|p_{ij}|-\tau w_{ij}]_+\text{sign}(p_{ij}).
\end{equation}
Similarly ,the  update for $V$ is given by 
\begin{equation}
    V^{k+1} = \mathcal{S}_{1/L_2^k}\left(Q^k,\nabla_V\phi\left(r\left(U^{k+1},V^k\right)\right)\right),
\end{equation}
where $L_2^k=\|(U^{k+1})^T U^{k+1} \|$, $Q^k = \bar V^k - \frac{1}{L_2^k}\nabla_V \psi(U^{k+1},\bar V^k)$ and $\bar V^k = V^k +\beta^k_2(V^k - V^{k-1})$.

Let us now determine $\eta_i^k $ and $\gamma_i^k$, for $i=1,2$, of Condition~\eqref{parameter}. 
Note that $x_i\mapsto  \langle\nabla_i\phi(r(y)), r_i(x_i) \rangle$ are convex. 
Furthermore, $\psi(U,V)$ is a block-wise convex function. Therefore, it follows from Remark~\ref{remark:composite} that we can take $\eta_i^k$ and $\gamma_i^k$ as in~\eqref{eq:gammastrong}. Note that $\tau_i^k=\beta_i^k$, since we choose Nesterov-type acceleration.  Condition~\eqref{parameter} becomes
$\beta_i^k \leq \sqrt{{C L_i^{k-1}}/{L_i^k}},
$
where $0<C<1$.  In our experiments, we choose 
\begin{equation}\label{beta-mcp}
\begin{array}{ll}
C=0.9999^2, \mu_0=1, \mu_k=\frac12(1+\sqrt{1+4\mu_{k-1}^2}), \\
\beta_i^k =\min\Big\{\frac{\mu_k-1}{\mu_k},\sqrt{{C L_i^{k-1}}/{ L_i^{k}}}\Big\}.
\end{array}
\end{equation}
{\color{black}We compare three algorithms: 
(1)~TITAN without extrapolation, that is, $\beta_i^k=0$ for all $k$, which is denoted by TITAN-NO, 
(2)~TITAN with extrapolation, that is, $\beta_i^k$ are  chosen as in \eqref{beta-mcp}, which is denoted by TITAN-EXTRA}, and (3)~PALM that alternatively updates $U$ and $V$ 
by solving the following sub-problems 
$$
\begin{array}{ll}
        \min\limits_U\iprod{\nabla_U \psi( U^k,V^k)}{U}  + \frac{L_1(V^k)}{2} \|U- U^k\|^2 + \lambda\sum_{ij}\Big(1-\exp(-\theta |u_{ij}|)\Big)         , \\
     \min\limits_V\iprod{\nabla_V \psi( U^{k+1},V^k)}{V}  + \frac{L_2(U^{k+1})}{2} \|V- V^k\|^2 + \lambda\sum_{ij}\Big(1-\exp(-\theta |v_{ij}|)\Big).
   \end{array}
   $$
These sub-problems can be separated into one-dimensional nonconvex problems
\begin{equation}
    \min_{x\in\mathbb R}\frac{1}{2}\|x-v\|^2 - \gamma\exp(-\theta|x|).
\end{equation}
Although the solutions to these subproblems can be computed via the Lambert $W$ function \citep{Corless_1996}, it does not have a closed-form solution. 
 \revise{To the best of our knowledge, TITAN is the only framework that allows to use  extrapolation while having closed-form updates to solve this particular matrix completion formulation.} 

In our experiments, {\color{black}all the algorithms start from the same initial point $(U^0, V^0)$, where $ U^0$ is an $\mbfm\times \mbfr$ orthogonal matrix whose range approximates the range of $\mathcal P(A)$, which is computed by a power method \cite[Algorithm 4.1]{Halko2011}  with $\mbfr$ iterations and a  tolerance $10^{-6}$. The initial matrix $V^0$ is determined by $V^0 = \mathbf V^T$ with $U\Sigma V^T$ being the singular value decomposition of $(U^0)^T\mathcal P(A)$, i.e.,  $U\Sigma V^T = ( U^0)^T\mathcal P(A)$.} We choose $\lambda = 0.1$ and $\theta = 5$. We note that we do not optimize numerical results by tweaking the parameters as this is beyond the scope of this work. Rather, we simply chose the parameters that are typically used in the literature, see,  e.g., \cite{brafea}. 
It is important noting that we evaluate the algorithms on the same models. 
We carried out the experiments on the two most widely used data sets in the field of recommendation systems, MovieLens and Netflix, which contain ratings of different users. 
The characteristics of the data sets are given in Table~\ref{dataset}.
We respectively choose $\mbfr = 5, 8$, and $13$ for MovieLens 1M, 10M, and Netflix data set. We  randomly picked 70\% of the observed ratings for training and the rest for testing. The process was repeated twenty times. We run each algorithm 20, 200, and 3600 seconds for MovieLens 1M, 10M, and Netflix data sets, 
respectively. 
We are interested in the root mean squared error on the test set: $RMSE = \sqrt{\|\mathcal P_T(A - UV)\|^2/N_T}$, where $\mathcal P_T(Z)_{ij} = Z_{ij}$ if $A_{ij}$ belongs to the test set and $0$ otherwise, $N_T$ is the number of ratings in the test set.
 We plotted the curves of the average value of RMSE and the objective function value 
 versus training time in Figure~\ref{fig:MCP}, and report the average and the standard deviation of the RMSE and the objective function value in Table~\ref{results}. 
\begin{figure*}[ht!]
\begin{center}
\begin{tabular}{cc}
\includegraphics[width=0.418\textwidth]{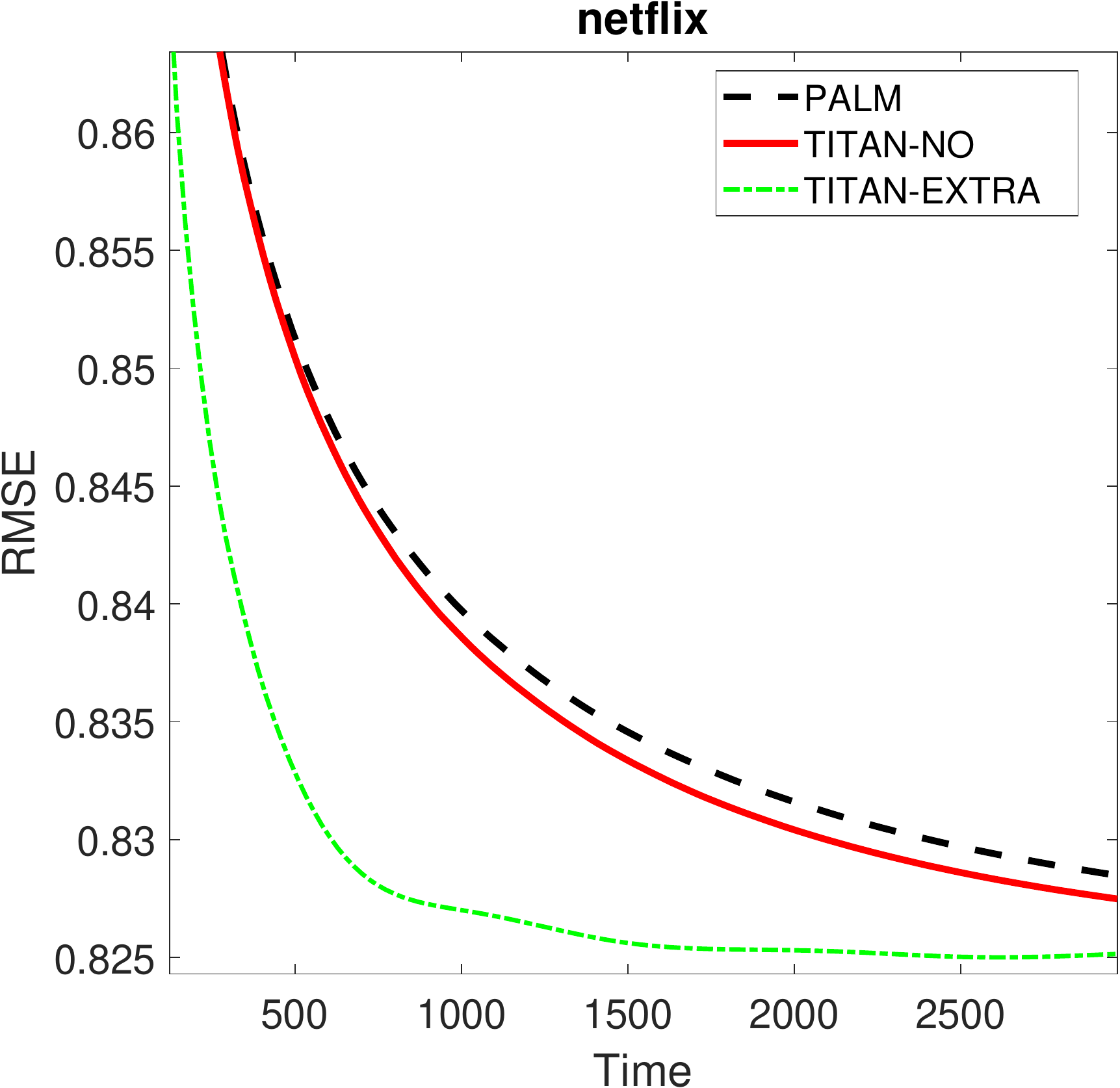}  & 
\includegraphics[width=0.41\textwidth]{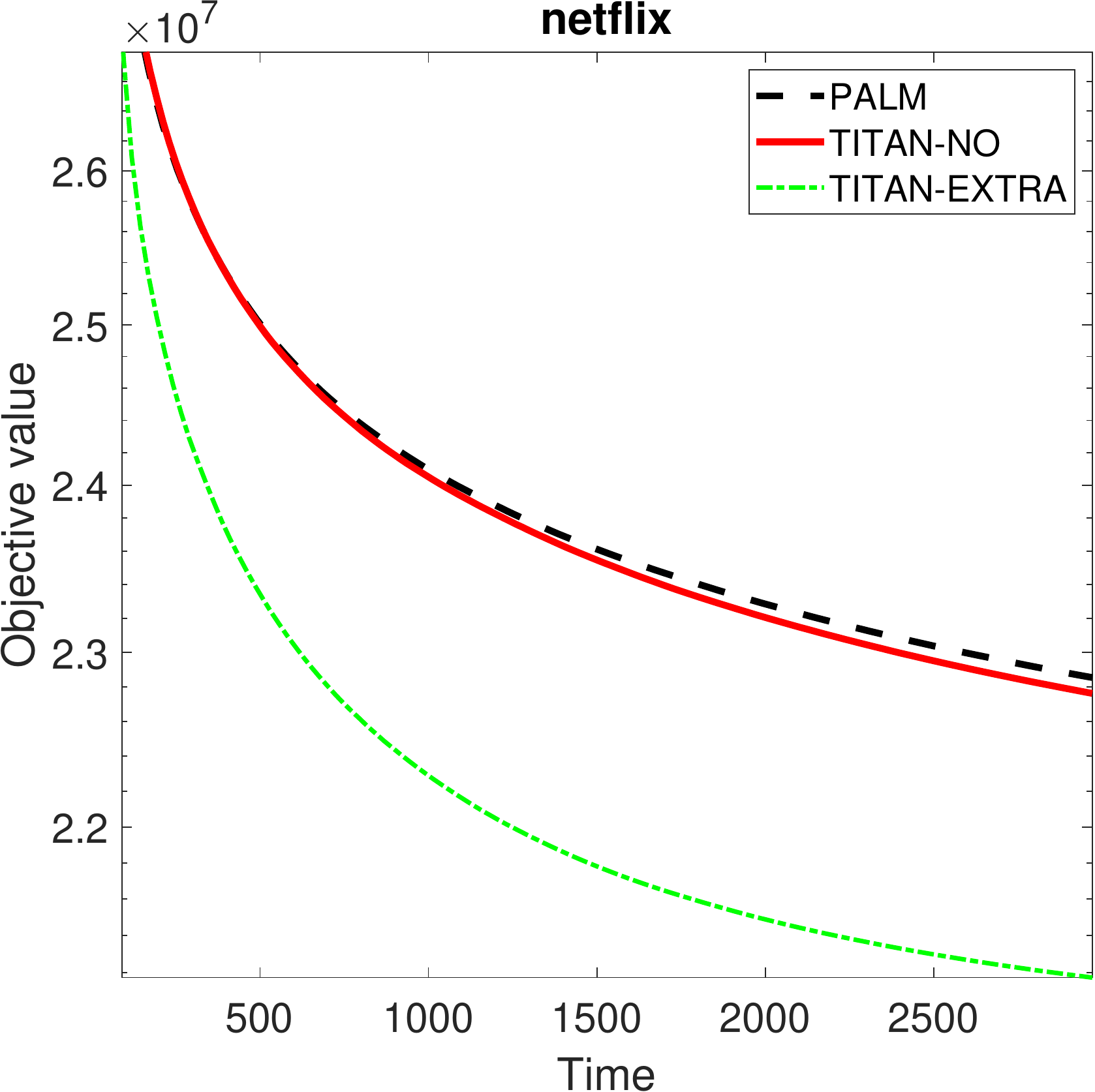} \\
\includegraphics[width=0.413\textwidth]{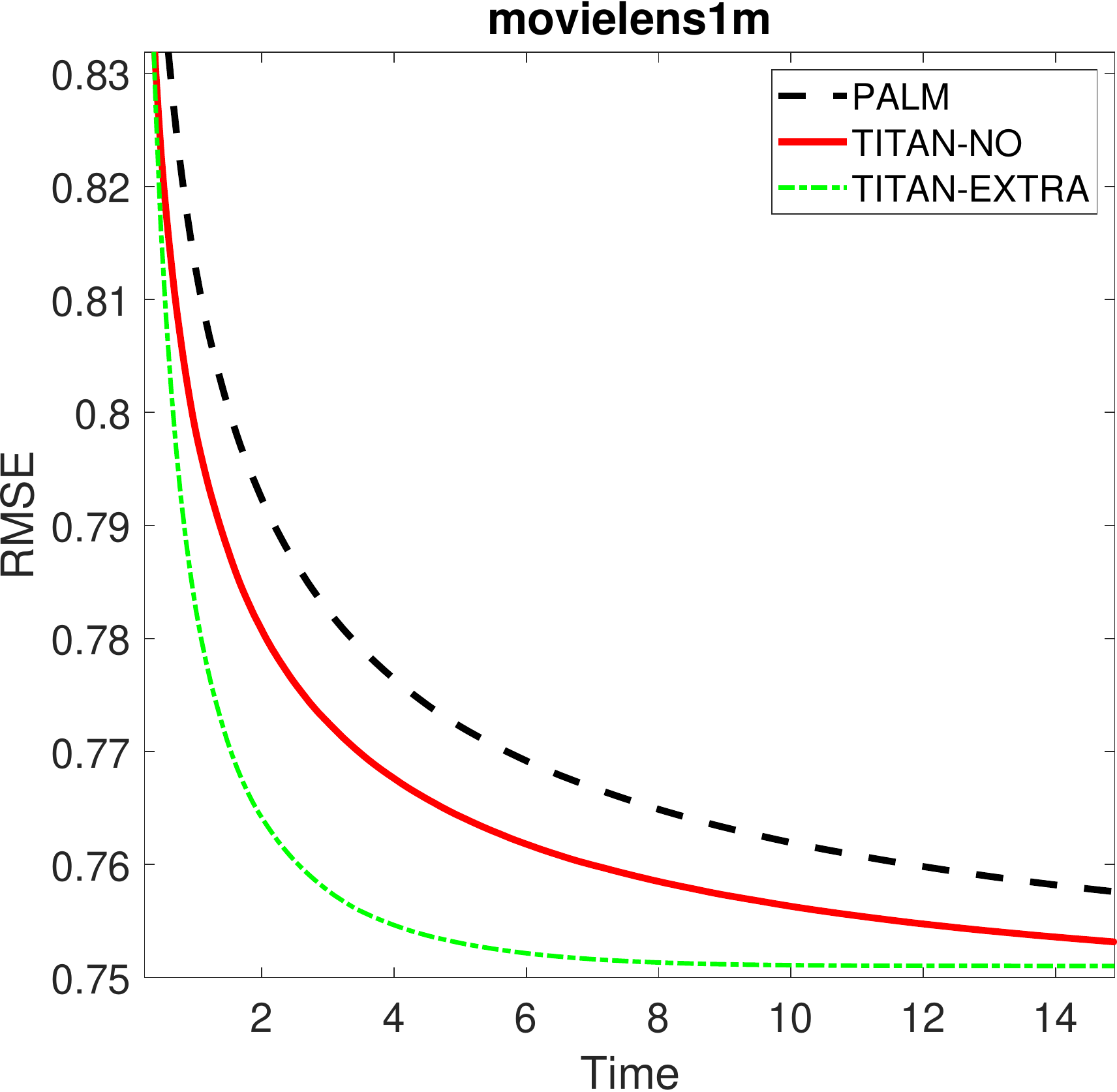}  & 
\includegraphics[width=0.413\textwidth]{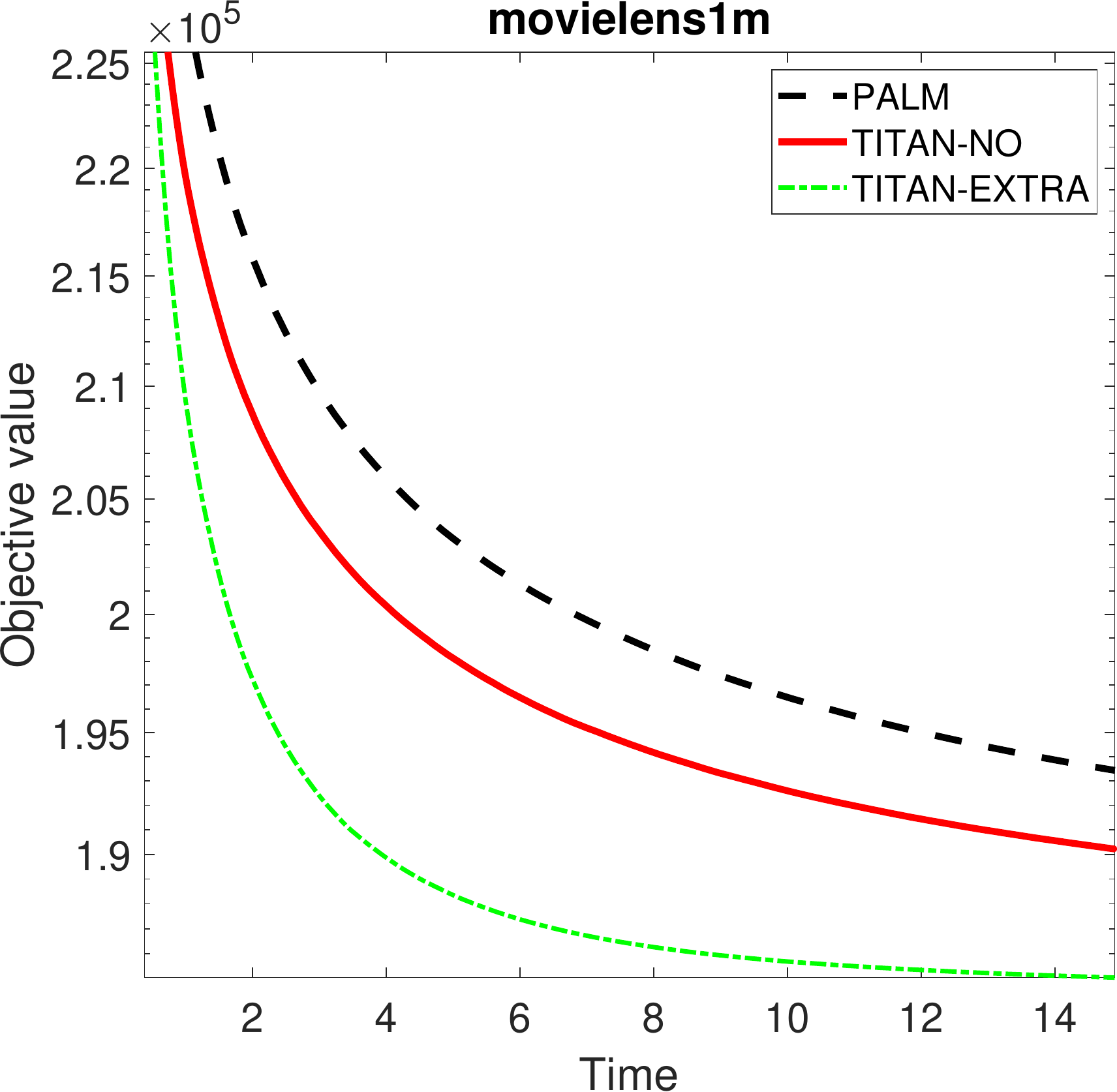} \\
\includegraphics[width=0.415\textwidth]{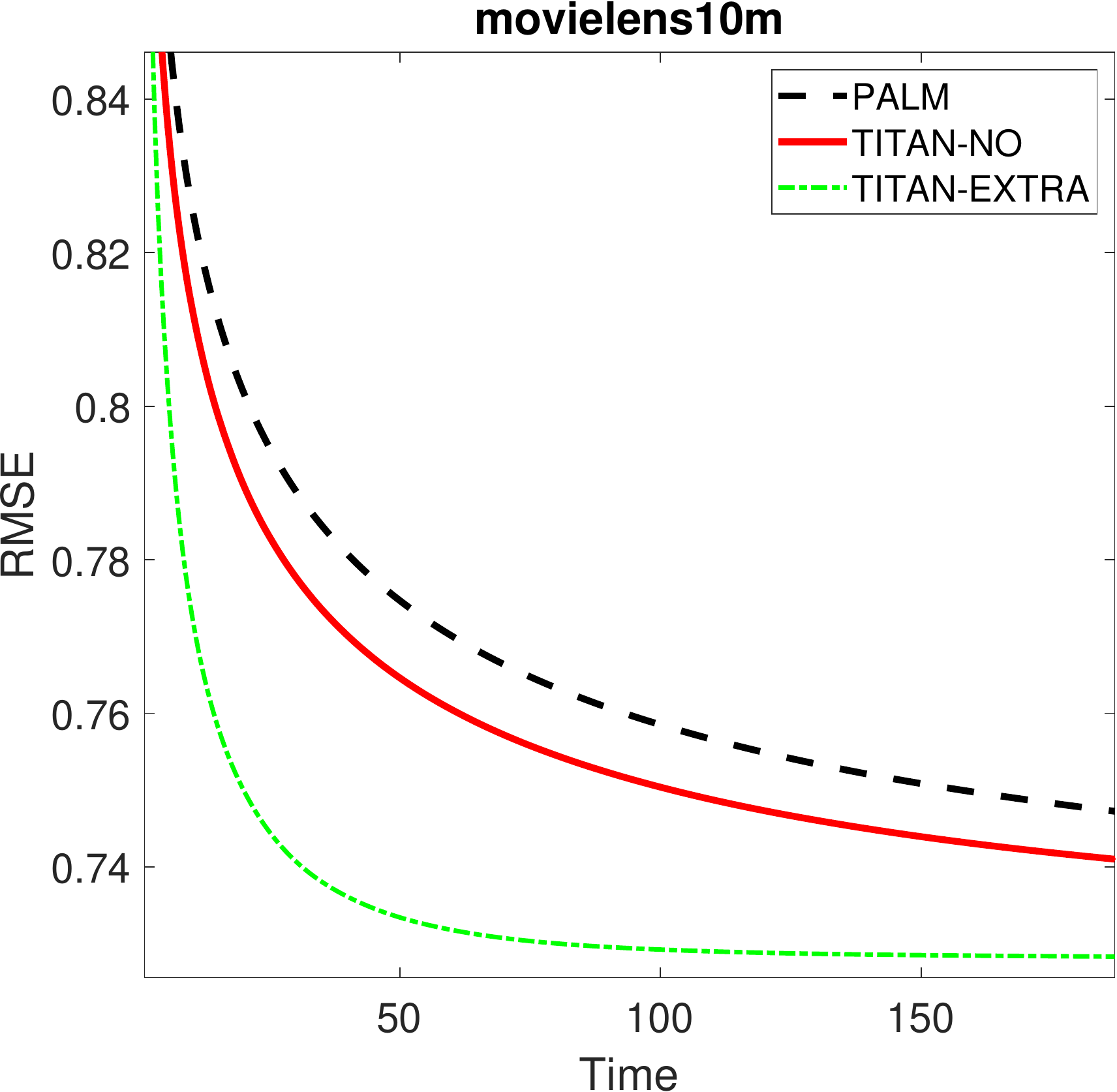}  & 
\includegraphics[width=0.409\textwidth]{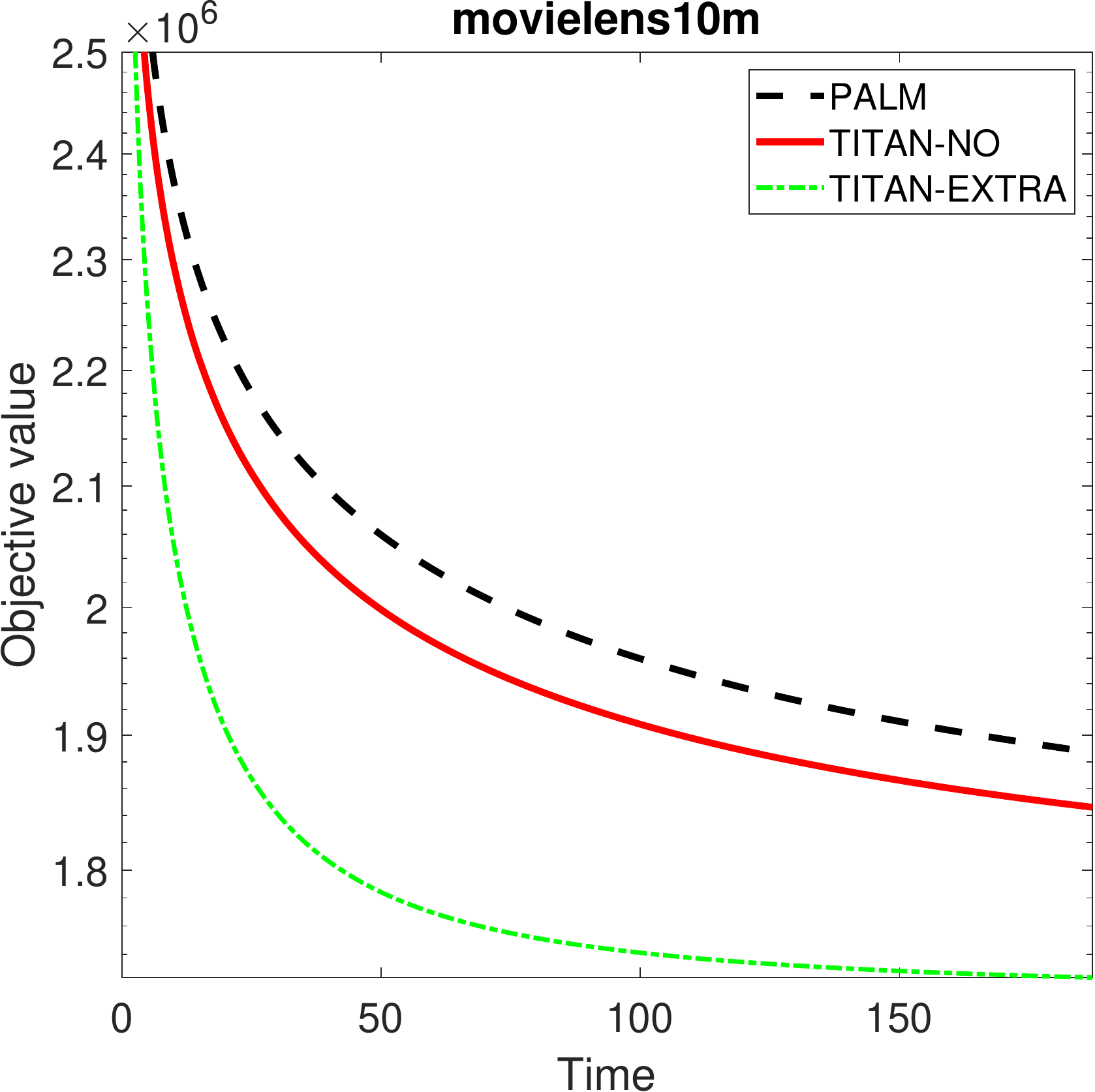}
\end{tabular}
\caption{TITAN and PALM applied on the MCP~\eqref{MF}. Evolution of the average value of the RMSE on the test set and the objective function value with respect to time. 
\label{fig:MCP}} 
\end{center}
\end{figure*}

\begin{table}[tbh!]
\centering
\caption{The number of users, items, and ratings used in each data set.}\label{dataset}
\begin{tabular}{@{}cllll@{}}
\toprule
\multicolumn{2}{l}{Data set}       & \multicolumn{1}{l}{\#users} & \multicolumn{1}{l}{\#items} & \multicolumn{1}{l}{\#ratings} \\ \midrule
\multirow{2}{*}{MovieLens}  & 1M  & 6,040                       & 3,449                       & 999,714                       \\
                            & 10M & 69,878                      & 10,677                      & 10,000,054                    \\
\multicolumn{1}{l}{Netflix} &     & 480,189                     & 17,770                      & 100,480,507                   \\ \bottomrule
\end{tabular}
\end{table}


\begin{table}[tbh!]
\centering
\caption{Comparison of TITAN and PALM applied on the MCP~\eqref{MF}: RMSE and final objective function values obtained within the allotted time. 
Bold values indicate the best results for each data set.}\label{results}
\begin{tabular}{@{}clll@{}}
\toprule
\multirow{2}{*}{Data set}      & \multirow{2}{*}{Method} & RMSE & Objective value \\
& & mean $\pm$ std & (mean $\pm$ std)$\times 10^{-5}$ \\ \midrule
 
  \multirow{3}{*}{MovieLens 1M}   & PALM &  0.7550 $\pm$   0.0016  & 1.9155 $\pm$   0.0088 \\
  & TITAN-NO &  0.7514  $\pm$  0.0013  & 1.8879 $\pm$   0.0066\\
  
 & TITAN-EXTRA  & \textbf{0.7509}  $\pm$  0.0008 & \textbf{1.8483} $\pm$   0.0038\\
  
  \midrule
  \multirow{3}{*}{MovieLens 10M} & PALM  & 0.7462    $\pm$ 0.0006  &  18.8038  $\pm$  0.0348\\
  & TITAN-NO  & 0.7402  $\pm$  0.0006  & 18.4027 $\pm$   0.0375\\
  
  & TITAN-EXTRA & \textbf{0.7283}  $\pm$  0.0005 & \textbf{17.2277}   $\pm$ 0.0236\\
\midrule
  \multirow{3}{*}{Netflix} & PALM & 0.8274 $\pm$   0.0006  &  226.4846  $\pm$  1.1898\\
  & TITAN-NO  & 0.8265  $\pm$  0.0006 & 225.4806 $\pm$   1.1808\\
  
  & TITAN-EXTRA & \textbf{0.8250} $\pm$ 0.0004 & \textbf{210.4999}  $\pm$  0.3569\\
\hline
\end{tabular}
\vspace{-0.1in}
\end{table}

We observe that TITAN-EXTRA converges the fastest on all the data sets, providing a significant acceleration of TITAN-NO:  as shown on Table~\ref{tab:leadtime}, 
TITAN-EXTRA is at least 4 times faster than TITAN-NO on the three data sets.  
TITAN-EXTRA achieves not only the best final objective function values but also the best RMSE on the test set. 
This illustrates the usefulness of the inertial terms.  
Moreover, TITAN-NO performs better PALM on the three data sets which illustrates the usefulness of properly choosing the surrogate function. Recall that TITAN-NO and TITAN-EXTRA are two new algorithms for the MCP~\eqref{MF}, which are specific instances of the TITAN framework. 
\ngi{
\begin{center}
\begin{table}[h!] 
    \begin{center} 
    \normalsize
        \begin{tabular}{l||c|c|c} 
            data set     & TITAN-EXTRA & TITAN-NO  & acceleration  \\
                        & lead time (s) & total time (s)  &  factor \\ 
            \hline 
            netflix       &   674.91  &  3000 & 4.44  \\ 
            movielens1m   &   3.8  & 15  & 3.94  \\
            movielens10m &   28.67   & 200  &  6.97 \\ 
        \end{tabular}
    \end{center} 
\caption{TITAN lead time compared to TITAN-NO to obtain the same objective function value within the allotted time. 
\label{tab:leadtime} 
}
\end{table} 
\end{center}
}

\section{Conclusion}
\label{sec:conclusion}

We have proposed and analysed TITAN, a novel inertial block majorization-minimization algorithmic framework.   
TITAN unifies many inertial block coordinate descent methods, while allowing to derive new highly efficient algorithms, as illustrated in Section~\ref{sec:MCP} on the MCP.  
We proved sub-sequential convergence of TITAN under mild assumptions and global convergence of TITAN under some stronger assumptions. We applied TITAN to sparse NMF and the MCP to illustrate the benefit of using inertial terms in BCD methods, and of using proper surrogate functions. 
Especially, the way we choose the surrogate functions and the corresponding extrapolation operators to derive TITAN-based  algorithms for the MCP illustrated the advantages of using TITAN compared to the typical proximal BCD methods. Our future research direction include the development of TITAN-based algorithms for solving specific practical problems, 
for which using typical proximal BCD methods is not efficient (in particular when a closed-form for the subproblems in each block of variables does not exist).

\begin{appendix}
\label{appendix}
 
\section{Preliminaries of nonconvex nonsmooth optimization}
 \label{sec:prelnnopt}

Let $g: \bbE\to \bbR\cup \{+\infty\} $ be a proper lower semicontinuous function.  
\begin{definition}
\label{def:dd}
\begin{itemize}
\item[(i)] For each $x\in{\rm dom}\,g,$ we denote $\hat{\partial}g(x)$ as
the Frechet subdifferential of $g$ at $x$ which contains vectors
$v\in\mathbb{E}$ satisfying 
\[
\liminf_{y\ne x,y\to x}\frac{1}{\left\Vert y-x\right\Vert }\left(g(y)-g(x)-\left\langle v,y-x\right\rangle \right)\geq 0.
\]
If $x\not\in{\rm dom}\:g,$ then we set $\hat{\partial}g(x)=\emptyset.$  
\item[(ii)] The limiting-subdifferential $\partial g(x)$ of $g$ at $x\in{\rm dom}\:g$
is defined as follows. 
\[
\partial g(x) := \left\{ v\in\mathbb{E}:\exists x^{k}\to x,\,g\big(x^{k}\big)\to g(x),\,v^{k}\in\hat{\partial}g\big(x^{k}\big),\,v^{k}\to v\right\} .
\]
\end{itemize}
\end{definition}
\begin{definition}
\label{def:type2}
We call $x^{*}\in \rm{dom}\,F$ a critical point of $F$ if $0\in\partial F\left(x^{*}\right).$ 
\end{definition}
We note that if $x^{*}$ is a local minimizer of $F$ then $x^{*}$ is a critical point of $F$. 
\begin{definition}
\label{def:KL}
A function $\phi(x)$ is said to have the KL property
at $\bar{x}\in{\rm dom}\,\partial\, \phi$ if there exists $\eta\in(0,+\infty]$,
a neighborhood $U$ of $\bar{x}$ and a concave function $\xi:[0,\eta)\to\mathbb{R}_{+}$
that is continuously differentiable on $(0,\eta)$, continuous at
$0$, $\xi(0)=0$, and $\xi'(s)>0$ for all $s\in(0,\eta),$ such that for all
$x\in U\cap[\phi(\bar{x})<\phi(x)<\phi(\bar{x})+\eta],$ we have
\begin{equation}
\label{ieq:KL}
\xi'\left(\phi(x)-\phi(\bar{x})\right) \, \dist\left(0,\partial\phi(x)\right)\geq1.
\end{equation}
$\dist\left(0,\partial\phi(x)\right)=\min\left\{ \|y\|:y\in\partial\phi(x)\right\}$.
If $\phi(x)$ has the KL property at each point of ${\rm dom}\, \partial\phi$ then $\phi$ is a KL function. 
\end{definition}
Many nonconvex nonsmooth functions in practical applications belong to the class of KL functions, for examples, real analytic functions, semi-algebraic functions, and locally strongly convex functions, \revise{see}~\cite{Bochnak1998,Bolte2014}.

\section{Global convergence recipe}
\label{sec:global_converge}

Let us recall Theorem 2 of~\cite{Hien_ICML2020}. 

\begin{theorem}\cite[Theorem 2]{Hien_ICML2020}
\label{thm:globalrecipe}
Let $\Phi: \mathbb{R}^N\to (-\infty,+\infty]$ be a proper and lower semicontinuous function which is bounded from below. Let $\mathcal{A}$ be a generic algorithm which generates a bounded  sequence $\lrbrace{z^{k}}$ by $
z^{0}\in \bbR^N$, $z^{k+1}\in \calA(z^{k})$, $k=0,1,\ldots$
Assume that there exist positive constants  $\rho_1, \rho_2$  and $\rho_3$ and a non-negative sequence  $\lrbrace{\varphi_k}_{k\in \bbN}$ such that the following conditions are satisfied: 
\begin{itemize}
\item[(B1)] 
\textbf{Sufficient decrease property}: 
\[\rho_1 \|z^{k}-z^{k+1}\|^2 \leq \rho_2 \varphi_k^2 \leq \Phi(z^{k}) - \Phi(z^{k+1}), k=0,1,\ldots  
\] 
\item[(B2)] \textbf{Boundedness of subgradient}: 
\[
\|\omega^{k+1}\|\leq \rho_3 \varphi_k, \omega^{k}\in \partial \Phi (z^{k})  \; \text{ for } \; k=0,1,\ldots
\]
\item[(B3)] \textbf{KL property}: $\Phi$ is a KL function.
\item[(B4)] \textbf{A continuity condition}: If a subsequence 
$\{z^{k_n}\}$   converges to  $\bar{z}$  then $\Phi(z^{k_n})$ converges to $\Phi(\bar{z})$ as $n$ goes to $\infty$.
\end{itemize}
Then we have $
\sum_{k=1}^\infty \varphi_k<\infty
$, 
and $\{z^{k}\}$ converges to a critical point of $\Phi$. 
\end{theorem}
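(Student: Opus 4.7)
The plan is to follow the Attouch--Bolte--Svaiter style recipe, adapted so that the standard role of $\|z^k-z^{k+1}\|$ is played by the auxiliary quantity $\varphi_k$. The work splits into four stages.

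First I would extract monotonicity and trivial square-summability. By (B1), $\Phi(z^k)$ is non-increasing; combined with the lower bound on $\Phi$ it converges to some $\Phi^\ast$, and telescoping (B1) yields $\sum_{k=0}^\infty \varphi_k^2 < \infty$ and $\sum_{k=0}^\infty \|z^k-z^{k+1}\|^2 < \infty$. In particular $\varphi_k\to 0$ and, by (B2), $\mathrm{dist}(0,\partial \Phi(z^{k+1}))\to 0$. Next I would analyze the limit set $\omega(z^0)$, which is non-empty since $\{z^k\}$ is bounded. By (B4), $\Phi\equiv \Phi^\ast$ on $\omega(z^0)$, and using the closedness of $\mathrm{graph}(\partial\Phi)$ together with (B2) every point of $\omega(z^0)$ is a critical point of~$\Phi$. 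A standard argument shows $\omega(z^0)$ is compact and $\mathrm{dist}(z^k,\omega(z^0))\to 0$. If $\Phi(z^{k_0})=\Phi^\ast$ at some finite $k_0$, then by (B1) the iterates are constant thereafter and the result is trivial, so I may assume $\Phi(z^k)>\Phi^\ast$ for all~$k$.

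Second, I would invoke the uniformized KL property of Bolte--Sabach--Teboulle (Lemma~6 in their 2014 paper, derived from (B3) by a compactness argument on~$\omega(z^0)$): there exist $\varepsilon,\eta>0$ and a single desingularizing function $\xi$ such that for every $x$ satisfying $\mathrm{dist}(x,\omega(z^0))<\varepsilon$ and $\Phi^\ast<\Phi(x)<\Phi^\ast+\eta$,
\begin{equation*}
\xi'\bigl(\Phi(x)-\Phi^\ast\bigr)\,\mathrm{dist}(0,\partial\Phi(x))\ge 1.
\end{equation*}
For $k$ large enough, $z^k$ lies in this neighborhood and satisfies the value bound. Applying the inequality at $z^k$ with the subgradient $\omega^k$ from (B2) gives $\xi'(\Phi(z^k)-\Phi^\ast)\ge 1/(\rho_3\varphi_{k-1})$. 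Combining with concavity of $\xi$ and (B1),
\begin{equation*}
\Delta_k-\Delta_{k+1}\;\ge\;\xi'\bigl(\Phi(z^k)-\Phi^\ast\bigr)\bigl(\Phi(z^k)-\Phi(z^{k+1})\bigr)\;\ge\;\frac{\rho_2}{\rho_3}\,\frac{\varphi_k^2}{\varphi_{k-1}},
\end{equation*}
where $\Delta_k:=\xi(\Phi(z^k)-\Phi^\ast)$.

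Third, I would upgrade this to summability of $\varphi_k$ itself via the AM--GM trick $2\varphi_k\le \varphi_{k-1}+\varphi_k^2/\varphi_{k-1}$ (valid since $\varphi_{k-1}>0$ in the nontrivial case). Substituting the bound just derived yields
\begin{equation*}
2\varphi_k-\varphi_{k-1}\;\le\;\frac{\rho_3}{\rho_2}(\Delta_k-\Delta_{k+1}),
\end{equation*}
and summing this from a large index $K$ to $M$ collapses to $\sum_{k=K}^M \varphi_k \le \varphi_{K-1}+(\rho_3/\rho_2)\Delta_K$. Hence $\sum_{k=0}^\infty \varphi_k<\infty$. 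By (B1) this forces $\sum_{k=0}^\infty \|z^k-z^{k+1}\|\le \sqrt{\rho_2/\rho_1}\sum_k\varphi_k<\infty$, so $\{z^k\}$ is Cauchy, converges to some $z^\ast$. Continuity (B4) gives $\Phi(z^k)\to \Phi(z^\ast)$; (B2) plus $\varphi_k\to 0$ supplies $\omega^{k+1}\to 0$ with $\omega^{k+1}\in\partial\Phi(z^{k+1})$; and the closedness of $\partial\Phi$ yields $0\in\partial\Phi(z^\ast)$, concluding the proof.

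The main obstacle is the third step: turning the square-summability provided by (B1) into absolute summability of $\varphi_k$. Neither (B1) nor (B2) alone can do this, because (B2) controls the subgradient by $\varphi_{k-1}$ while the decrease in (B1) is measured by $\varphi_k^2$. The KL inequality provides precisely the ``square-root trick'' that bridges these two quantities, but the argument must be careful on two fronts: (i) producing a \emph{single} $\xi$ valid along the whole tail of the sequence (handled by the uniformized KL on $\omega(z^0)$), and (ii) correctly matching the index shift between the subgradient in (B2) (indexed by $k+1$ with bound $\rho_3\varphi_k$) and the KL inequality at $z^k$, which dictates whether $\varphi_{k-1}$ or $\varphi_k$ appears on the right-hand side of the key recursion.
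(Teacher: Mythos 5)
Your proof is correct and follows essentially the same route as the source of this result: the paper does not prove Theorem~\ref{thm:globalrecipe} itself but imports it from \cite[Theorem 2]{Hien_ICML2020}, whose argument (mirrored in the paper's own Appendix~\ref{proof_global} for the restart case) is exactly this Attouch--Bolte--Svaiter-style scheme --- uniformized KL on the compact limit set where $\Phi$ is constant, concavity of the desingularizing function, the AM--GM bridge from $\varphi_k^2/\varphi_{k-1}$ to $\varphi_k$, and telescoping to get $\sum_k \varphi_k<\infty$, hence a Cauchy sequence whose limit is critical by closedness of $\partial\Phi$ together with (B2) and (B4). Your treatment of the index shift in (B2) (using $\rho_3\varphi_{k-1}$ at $z^k$) and of the trivial case $\Phi(z^{k_0})=\Phi^\ast$ matches that argument, and the only implicit step --- that $\varphi_{k-1}>0$ along the tail in the nontrivial case --- follows immediately from the KL inequality combined with (B2), so there is no gap to report.
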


\section{Technical proofs}

In this section, we provide the proofs for Theorem~\ref{thm:global_convergence} and Proposition~\ref{prop:sufficient_decrease_2}.

\subsection{Proof of Theorem~\ref{thm:global_convergence}}
\label{proof_global} 
Let $x^*$ be a limit point of $x^k$. 
 From Theorem~\ref{thm:subsequential_converge} we have $x^*$ is a critical point of $\Phi$. 
 As the generated sequence $\{x^k\}$ is assumed to be bounded, in the following, we only work on the bounded set that contains $\{x^k\}$. 

\textit{Case 1: $C <  \underline{l}/\overline{l}$}. 
Define $\Phi^\delta(x,y) :=  \Phi(x) + \sum_{i=1}^m\frac{\delta_i}{2}\|x_i - y_i\|^2.$ Let $z^k=(x^k,x^{k-1})$ and $\varphi_k^2=\frac{1}{2}\|x^{k+1}-x^k\|^2+\frac12\|x^k-x^{k-1}\|^2 $ .  We verify the conditions of Theorem \ref{thm:globalrecipe} for  $\Phi^\delta(x^k,x^{k-1})$ with $ \delta_i= (\underline{l} + C \overline{l})/2$. 

(B1) \emph{Sufficient decrease property.} From Inequality \eqref{ieq:recursive}, we have 
$$F(x^{k+1}) + \underline{l}\|x^{k+1}-x^k\|^2 \leq F(x^k) + C \overline{l} \|x^k-x^{k-1}\|^2. 
$$
Hence, $ \Phi^\delta(z^k)-\Phi^\delta(z^{k+1}) \geq (\underline{l}-C \overline l)\varphi_k^2$. 

(B2) \emph{Boundedness of subgradient.} We note that 
\begin{equation}
\label{eq:derivative_Phi}
\partial_x \Phi^\delta(x,y)=\partial \Phi(x) + [\delta_i(x_i-y_i)|_{i=1,\ldots,m}], \quad \partial_y \Phi^\delta(x,y) =[\delta_i(y_i-x_i)|_{i=1,\ldots,m}].
\end{equation} 
Writing the optimality condition for \eqref{eq:iMM_update}, we have
$$
\mathcal G^k_i(x^k_i,x^{k-1}_i)\in \partial_{x_i} \big( u_i(x_i^{k+1},x^{k,i-1}) + \mathcal I_{\mathcal X_i}(x_i^{k+1})+ g_i(x_i^{k+1})\big).
$$
Hence, by Assumption~\ref{assump:Lipschitz_ui} (i), there exist $\mathbf s_i^k \in \partial_{x_i} u_i(x_i^{k+1},x^{k,i-1})$ and $ \mathbf v_i^k \in \mathcal \partial (I_{\mathcal X_i}(x_i^{k+1})+ g_i(x_i^{k+1})) $ such that 
$$ \mathcal G^k_i(x^k_i,x^{k-1}_i) = \mathbf s^k_i + \mathbf v_i^k.
$$
As we assume Assumption \ref{assump:Lipschitz_ui} (ii) holds, there exists $\mathbf t_i^k \in \partial_{x_i} f(x^{k+1})$ such that $$\|\mathbf s_i^k- \mathbf t_i^k\| \leq B_i \| x^{k+1} - x^{k,i-1}\|.
$$
We note that $ \mathbf t_i^k + \mathbf v_i^k \in \partial_{x_i} \Phi(x^{k+1})$ by Assumption~\ref{assump:Lipschitz_ui} (i). On the other hand, 
$$\|\mathbf t_i^k + \mathbf v_i^k\| = \|\mathbf t_i^k - \mathbf s_i^k + \mathbf s_i^k +  \mathbf v_i^k \|\leq B_i \| x^{k+1} - x^{k,i-1}\| + A^k_i \| x^k_i-x^{k-1}_i\|,
$$
which implies the boundedness of the subgradient since $A^k_i$ is bounded.

(B3) \emph{KL property.} As $\Phi$ is a KL function, $\Phi^\delta$ is also a KL function. 

(B4) \emph{A continuity condition.} Suppose $z^{k_n} \to z^*$. From Proposition \ref{prop:sufficient_decrease}, we have that if $x^{k_n}$ converges to $x^*$ then  $x^{k_n-1}$ also converges to $x^*$. Hence $z^*=(x^*,x^*)$. 
On the other hand, we can derive from \eqref{eq:foundation} that, for $i\in [m]$, $u_i(x_i^{k_n}, x^{k_n-1,i-1}) + g_i(x_i^{k_n})$ converges to $u_i(x_i^*,x^*) + g_i(x_i^*)$. As we assume $u_i(\cdot,\cdot)$ is continuous we have 
$u_i(x_i^{k_n}, x^{k_n-1,i-1})$ converges to $u_i(x_i^*,x^*)=f(x^*)$. Hence,  $g_i(x_i^{k_n})\to g_i(x_i^*)$.
We then have 
$
F(x^{k_n})=f(x^{k_n}) + \sum g_i(x^{k_n}_i)  
$
converges to $F(x^*)$,  
which leads to $ \Phi^\delta(z^{k_n+1})$ converges to $\Phi^\delta(z^*)$

Applying Theorem~\ref{thm:globalrecipe}, we get $0\in \partial \Phi^\delta(x^*,x^*)$, which leads to $0\in \partial \Phi(x^*)$.

\textit{Case 2: With restart}.  We use the technique in the proof of \cite[Theorem~1]{Bolte2014}  with some modification.
 A restarting step would be taken when $F(x^{k+1}) \geq F(x^k)$. When restarting happens,  Condition~\eqref{requirement} is assumed to be satisfied with $\gamma_i^k=0$,  \revise{we thus have 
 \begin{equation}
     \label{ieq:recursive-restart}
 F(x^{k+1}) + \sum_{i=1}^m \frac{\eta_i^k}{2}\|x_i^{k+1}-x_i^k\|^2 \leq F(x^k).
\end{equation}
Hence, we have 
  \begin{equation}
     \label{ieq:recursive-general}
     F(x^{k+1}) + \sum_{i=1}^m \frac{\eta_i^k}{2}\|x_i^{k+1}-x_i^k\|^2 \leq F(x^k) + \hat C \sum_{i=1}^m \frac{\eta_i^{k-1}}{2}\|x_i^{k}-x_i^{k-1}\|^2,
\end{equation} where $\hat C=C$ in normal situation as in Inequality~\eqref{ieq:recursive} and $\hat C=0$ when restarting happens. Thus the result in Proposition~\ref{prop:sufficient_decrease} does not change. Exactly as for the proof of the continuity condition (B4) above (the first case), 
we can show that $
F(x^{k_n})\to F(x^*)$. } Since $F(x^k)$ is non-increasing we have $F(x^k)\to F(x^*)$. This also means $\Phi(x)$ is constant on the set $\Omega$ of all limit points of $x^k$. From Proposition~\ref{prop:sufficient_decrease}, we have $\|x^k -x^{k-1}\| \to 0$. Hence,   \cite[Lemma 5]{Bolte2014} yields that $\Omega$ is a compact and connected set.    

\revise{Let us recall that restarting happens when $F(x^{k+1}) \geq F(x^k)$ and when it happens Inequality~\eqref{ieq:recursive-restart} holds}. 
Therefore, as long as $x^{k+1} \ne x^k$, $F(x^k)$ is strictly decreasing (that is $F(x^{k+1}) < F(x^k)$).  Hence, if there exists an integer $\bar k$ such that $F(x^{\bar k}) = F(x^*)$ then we have $F(x^k)=F(x^*)$ and $x^k=x^{\bar k}$ for all $k\geq \bar k$. So this case is trivial. 

Let us consider $F(x^{k})> F(x^*)$ for all $k$. Then there exists a positive integer $k_0$ such that $F(x^k) < F(x^*) + \eta$ for all $k>k_0$. On the other hand, there exists a positive integer $k_1$ such that ${\rm dist} (x^k,\Omega) < \varepsilon $ for all $k>k_1$. Applying \cite[Lemma 6]{Bolte2014} we have 
\begin{equation}
\label{eq:KL}
\xi'\big(\Phi \lrpar{x^{k}}-\Phi (x^*)\big) \dist \big(0, \partial \Phi (x^{k})\big)\geq 1, \text{for any}\, k>\mathbf k:=\max\{k_0,k_1\}.
\end{equation}
 On the other hand, \revise{exactly as for Case 1 without restarting step, we can prove} that $\exists\varpi>0$ such that for some $\omega^{k+1} \in \partial \Phi(x^{k+1})$ we have 
$\| \omega^{k+1}\| \leq \varpi \varphi_k \leq \frac{\varpi}{\sqrt{2}} (\|x^{k+1} - x^{k}\| + \|x^{k} - x^{k-1}\| ).
 $ 
 Therefore, it follows from~\eqref{eq:KL} that 
 \begin{equation}
 \label{eq:KL2}
  \begin{array}{ll}
 \xi'\big(\Phi \lrpar{x^{k}}-\Phi (x^*)\big) (\|x^{k+1} - x^{k}\| + \|x^{k} - x^{k-1}\| )\geq \frac{\sqrt 2}{\varpi}
 \end{array}
 \end{equation}
 \revise{From Inequality~\eqref{ieq:recursive-general} and noting that $\bar C\leq C$,} we get 
 \begin{equation}
 \label{eq:KL3}
 \begin{array}{ll}
 \Phi(x^k) - \Phi(x^{k+1}) \geq \sum_{i=1}^m \frac{\eta_i^k}{2}\|x_i^{k+1} - x_i^k\|^2   - C  \sum_{i=1}^m  \frac{\eta_i^{k-1}}{2}\|x_i^{k} - x_i^{k-1}\|^2 
 \end{array}
\end{equation}  
 Denote 
 $ A_{i,j}=\xi (\Phi(x^i)-\Phi(x^*))-\xi(\Phi(x^j)-\Phi(x^*))$. From the concavity of $\xi$
 we get  
 $
 A_{k,k+1} \geq  \xi'\big(\Phi \lrpar{x^{k}}-\Phi (x^*)\big) \big(\Phi (x^{k})-\Phi (x^{k+1})\big)
$. 
 Together with \eqref{eq:KL2} and \eqref{eq:KL3} we get
  \begin{equation}
 \label{eq:KL4}
 \sum_{i=1}^m \frac{\eta_i^k}{2}\|x_i^{k+1} - x_i^k\|^2  \leq C  \sum_{i=1}^m  \frac{\eta_i^{k-1}}{2}\|x_i^{k} - x_i^{k-1}\|^2  + \frac{\varpi}{\sqrt 2} A_{k,k+1}(\|x^{k+1} - x^{k}\| + \|x^{k} - x^{k-1}\| )
 \end{equation}
Denote $\Upsilon^k=\sum_{i=1}^m \frac{\eta_i^k}{2}\|x_i^{k+1} - x_i^k\|^2$.  Using inequality $\sqrt{a + b }\leq \sqrt{a}+\sqrt{b}$ and $\sqrt{ab}\leq t a + b/4t$, for $t>0$, from \eqref{eq:KL4} we get
 $$\begin{array}{ll}
 \sqrt{\Upsilon^k} &\leq \sqrt{C \Upsilon^{k-1}} + \sqrt{\frac{\varpi A_{k,k+1}}{\sqrt 2}(\|x^{k+1} - x^{k}\| + \|x^{k} - x^{k-1}\| )}\\
 &\leq  \sqrt{C \Upsilon^{k-1}}  + \frac{(1-\sqrt{C})\sqrt{\underline l}}{3} (\|x^{k+1} - x^{k}\| + \|x^{k} - x^{k-1}\| )+ \frac{3\varpi A_{k,k+1}}{4\sqrt 2\sqrt{\underline l}(1-\sqrt{C})} 
 \end{array}$$
 Summing up this inequality from $k=\mathbf k +1 $ to $K$ we obtain 
 
 \begin{align*}
   \sqrt{\Upsilon^K}+\sum_{k=\mathbf k +1}^{K-1}(1-\sqrt{C}) \sqrt{\Upsilon^k} &\leq \sqrt{C \Upsilon^{\mathbf k}} + \frac{(1-\sqrt{C})\sqrt{\underline l}}{3} \sum_{k=\mathbf k + 1}^{K} (\|x^{k+1} - x^{k}\| + \|x^{k} - x^{k-1}\| ) \\
 & \qquad\qquad+ \frac{3\varpi}{4\sqrt 2 \sqrt{\underline{l}} (1-\sqrt{C})} A_{\mathbf k+1,K+1}.
  \end{align*}
On the other hand, we note that $\sqrt{ \Upsilon^k} \geq \sqrt{\underline{l}} \|x^{k+1} - x^{k}\|$. Therefore, we get 
   \begin{align*}
  \frac23(1-\sqrt{C})\sqrt{\underline l}\sum_{k=\mathbf k +1}^{K}\|x^{k+1} - x^{k}\|
\leq \frac{(1-\sqrt{C})\sqrt{\underline l}}{3} \sum_{k=\mathbf k +1 }^{K}\|x^{k} - x^{k-1}\| + \frac{3\varpi A_{\mathbf k + 1,K+1}}{4\sqrt 2 \sqrt{\underline{l}}(1-\sqrt{C})} ,
\end{align*} 
which implies that 
$
\sum_{k=\mathbf k +1}^{K}\|x^{k+1} - x^{k}\| \leq \|x^{\mathbf k+1}-x^{\mathbf k}\| + \frac{9\varpi}{4\sqrt 2(1-\sqrt{C})^2\underline l} A_{\mathbf k,K+1}.
$
Hence, $\sum_{k=1}^{ \infty}\|x^{k+1} - x^{k}\| < +\infty$. The result follows.

\subsection{Proof of Proposition~\ref{prop:sufficient_decrease_2}} 

\label{proof-essential}
Let us prove Statement (A). Statement (B) of Proposition~\ref{prop:sufficient_decrease_2} is a  consequence of Statement (A). 
From Inequality~\eqref{eq:condition_2} we get
{\small\begin{equation}
\label{eq:recursion_2}
F(\mbfx^{k,j})+ \frac{\bar\eta^{k,l-1}_i}{2}\|\barmx^{k,l}_i-\barmx^{k,l-1}_i\|^2
\leq F(\mbfx^{k,j-1})+ \frac{C \bar\eta^{k,l-2}_i}{2} \|\barmx^{k,l-1}_i-\barmx^{k,l-2}_i\|^2. 
\end{equation}}
Summing up Inequality~\eqref{eq:recursion_2} from $j=1$ to $T$ we obtain 
{\small
\begin{equation*}
F(\mbfx^{k+1})+ \sum_{i=1}^m\sum_{l=1}^{d_i^k}\frac{\bar\eta^{k,l-1}_i}{2}\|\barmx^{k,l}_i-\barmx^{k,l-1}_i\|^2
\leq
 F(\mbfx^{k})+  C\sum_{i=1}^m\sum_{l=1}^{d_i^k} \frac{\bar\eta^{k,l-2}_i}{2} \|\barmx^{k,l-1}_i-\barmx^{k,l-2}_i\|^2.
\end{equation*}}
Therefore,
{\small
\begin{equation}
\label{eq:recursion_4}
\begin{split}
&F(\mbfx^{k+1})+ C \sum_{i=1}^m \frac{\bar\eta_i^{k,d^k_i-1} }{2} \|\barmx^{k,d^k_i}_i-\barmx^{k,d^k_i-1}_i\|^2 + (1-C)\sum_{i=1}^m  \sum_{l=1}^{d_i^k} \frac{\bar\eta_i^{k,l-1}}{2} \|\barmx^{k,l}_i-\barmx^{k,l-1}_i\|^2
\\
&\qquad\leq 
F(\mbfx^{k})+ C\sum_{i=1}^m \frac{\bar\eta_i^{k,-1}}{2} \|\barmx^{k,0}_i-\barmx^{k,-1}_i\|^2.
\end{split}
\end{equation}}
Note that $ \barmx^{k,0}_i= \barmx^{k-1,d_i^{k-1}}_i$, $\barmx^{k,-1}_i=\barmx^{k-1,d_i^{k-1}-1}_i=(\mbfx^{k-1}_{prev})_i$ and $ \bar\eta_i^{k+1,-1}= \bar\eta_i^{k,d_i^{k}}$. Hence, from \eqref{eq:recursion_4} we obtain
{\small
\begin{equation}
\label{eq:recursion_5}
\begin{split}
&F(\mbfx^{k+1})+ C \sum_{i=1}^m \frac{\bar\eta_i^{k+1,-1}}{2} \|\mbfx^{k+1}_i-(\mbfx^{k+1}_{prev})_i\|^2 +  (1-C)\sum_{i=1}^m  \sum_{l=1}^{d_i^k} \frac{\bar\eta_i^{k,l-1} }{2}\|\barmx^{k,l}_i-\barmx^{k,l-1}_i\|^2
\\
&\qquad\leq F(\mbfx^{k})+ C\sum_{i=1}^m  \bar\eta_i^{k,-1} \|\mbfx^{k}_i-(\mbfx^{k}_{prev})_i\|^2.  
\end{split}
\end{equation}}
Summing up Inequality~\eqref{eq:recursion_5} from $k=0$ to $K-1$ we get
{\small
\begin{equation*}
\begin{array}{ll}
&F(\mbfx^{K})+ C\sum\limits_{i=1}^m \frac{\bar\eta_i^{K,-1}}{2} \|\mbfx^{K}_i-(\mbfx^{K}_{prev})_i\|^2 + (1-C)\sum_{k=0}^{K-1} \sum\limits_{i=1}^m  \sum\limits_{l=1}^{d_i^k} \frac{\bar\eta_i^{k,l-1}}{2} \|\barmx^{k,l}_i-\barmx^{k,l-1}_i\|^2
\\
&\qquad\leq F(\mbfx^{0})+ C\sum\limits_{i=1}^m  \frac{\bar\eta_i^{0,-1}}{2} \|\mbfx^{0}_i-(\mbfx^{0}_{prev})_i\|^2,  
\end{array}
\end{equation*}}
which gives the result.
\end{appendix}



\bibliography{iMM}

\end{document}